\def\todaysdate{14\textsuperscript{th} March 2025}
\definecolor{lightblue}{rgb}{0.8,0.8,1}
\declaretheoremstyle[
  spaceabove=\topsep,
  spacebelow=\topsep,
  headpunct=,
  numbered=no,
  postheadspace=1ex,
  headfont=\normalfont\bfseries,
  bodyfont=\normalfont\itshape,
]{italic}
\declaretheoremstyle[
  spaceabove=\topsep,
  spacebelow=\topsep,
  headpunct=,
  numbered=no,
  postheadspace=1ex,
  headfont=\normalfont\bfseries,
  bodyfont=\normalfont\upshape,
]{upright}
\declaretheorem[style=italic,name=Theorem,numbered=yes,numberwithin=section]{thm}
\declaretheorem[style=italic,name=Lemma,numbered=yes,numberlike=thm]{lem}
\declaretheorem[style=italic,name=Proposition,numbered=yes,numberlike=thm]{prop}
\declaretheorem[style=italic,name=Corollary,numbered=yes,numberlike=thm]{coro}
\declaretheorem[style=italic,name=Fact,numbered=yes,numberlike=thm]{fact}
\declaretheorem[style=italic,name=Theorem,numbered=yes,numberwithin=section]{athm}
\declaretheorem[style=italic,name=Proposition,numbered=yes,numberlike=athm]{aprop}
\declaretheorem[style=upright,name=Definition,numbered=yes,numberlike=thm]{defn}
\declaretheorem[style=upright,name=Remark,numbered=yes,numberlike=thm]{rmk}
\declaretheorem[style=upright,name=Outlook,numbered=yes,numberlike=thm]{outlook}
\declaretheorem[style=upright,name=Warning,numbered=yes,numberlike=thm]{wng}
\declaretheorem[style=upright,name=Example,numbered=yes,numberlike=thm]{eg}
\declaretheorem[style=upright,name=Notation,numbered=yes,numberlike=thm]{notation}
\declaretheorem[style=upright,name=Convention,numbered=yes,numberlike=thm]{convention}
\renewcommand*{\@seccntformat}[1]{\upshape\csname the#1\endcsname.\hspace{1ex}}
\renewcommand*{\section}{\@startsection{section}{1}{\z@}%
	{2.5ex \@plus 1ex \@minus 0.2ex}%
	{1.5ex \@plus 0.2ex}%
	{\normalfont\Large\bfseries}}
\renewcommand*{\subsection}{\@startsection{subsection}{2}{\z@}%
	{2.5ex \@plus 1ex \@minus 0.2ex}%
	{1.5ex \@plus 0.2ex}%
	{\normalfont\large\bfseries}}
\renewcommand*{\subsubsection}{\@startsection{subsubsection}{3}{\z@}%
	{2.5ex \@plus 1ex \@minus 0.2ex}%
	{1.5ex \@plus 0.2ex}%
	{\normalfont\normalsize\bfseries}}
\newcommand*{\subsubsubsection}{\@startsection{paragraph}{4}{\z@}%
	{2.5ex \@plus 1ex \@minus 0.2ex}%
	{1.5ex \@plus 0.2ex}%
	{\normalfont\normalsize\bfseries}}
\newcommand*{\subsubsubsubsection}{\@startsection{paragraph}{5}{\z@}%
	{2.5ex \@plus 1ex \@minus 0.2ex}%
	{1.5ex \@plus 0.2ex}%
	{\normalfont\normalsize\bfseries}}
\newcommand{\Mod}{\ensuremath{\mathrm{Mod}}}
\newcommand{\LB}{\mathbf{LB}}
\newcommand{\LCS}{\varGamma}
\newcommand{\GL}{\mathrm{GL}}
\newcommand{\BM}{\mathrm{BM}}
\newcommand{\B}{\mathbf{B}}
\newcommand{\wB}{\mathbf{wB}}
\newcommand{\C}{\mathbf{C}}
\newcommand{\exwB}{\widetilde{\mathbf{w}}\mathbf{B}}
\newcommand{\RB}{\mathbf{RB}}
\newcommand{\R}{\mathbf{R}}
\newcommand{\F}{\mathbf{F}}
\newcommand{\bk}{\mathbf{k}}
\newcommand{\id}{\mathrm{id}}
\newcommand{\compact}{\mathrm{cpt}}
\newcommand{\ab}{\mathrm{ab}}
\newcommand{\longtwoheadrightarrow}{%
  \longrightarrow\mathrel{\mkern-14mu}\rightarrow
}
\newcommand{\incl}[3][right]%
{%
\draw[<-,>=#1 hook] #2 to ($ #2!0.5!#3 $);
\draw[->] ($ #2!0.5!#3 $) to #3;%
}
\newcommand{\inclusion}[5][right]%
{%
\draw[<-,>=#1 hook] #4 to ($ #4!0.5!#5 $) node[#2,font=\small]{#3};
\draw[->] ($ #4!0.5!#5 $) to #5;%
}
\newenvironment{itemizeb}%
{\begin{compactitem}

}%
{\end{compactitem}}
\newcommand{\cL}{\mathcal{L}}
\newcommand{\bC}{\mathbb{C}}
\newcommand{\bD}{\mathbb{D}}
\newcommand{\bN}{\mathbb{N}}
\newcommand{\bR}{\mathbb{R}}
\newcommand{\bZ}{\mathbb{Z}}
\renewcommand{\geq}{\geqslant}
\renewcommand{\leq}{\leqslant}
\renewcommand{\footnoterule}{%
  \kern -3pt
  \hrule width \textwidth height 0.4pt
  \kern 2.6pt
}
\definecolor{dred}{rgb}{0.7,0,0}
\definecolor{dblue}{rgb}{0,0,0.8}
\definecolor{dgreen}{rgb}{0,0.5,0}
\definecolor{Arthur}{rgb}{0.3,0,1}
\definecolor{Martin}{rgb}{0,0.6,0.1}
\begin{document}
\title{\Large\bfseries The pro-nilpotent Lawrence-Krammer-Bigelow representation}
\author{\normalsize Martin Palmer\footnote{\textit{Institutul de Matematică Simion Stoilow al Academiei Române, 21 Calea Griviței, 010702 București, Romania}, email: \texttt{mpanghel@imar.ro}; \textit{School of Mathematics, University of Leeds, Leeds, LS2 9JT, UK}, email: \texttt{m.d.palmer-anghel@leeds.ac.uk}}{~} and Arthur Souli{\'e}\footnote{\textit{Normandie Univ., UNICAEN, CNRS, LMNO, 14000 Caen, France}, email: \texttt{artsou@hotmail.fr}, \texttt{arthur.soulie@unicaen.fr}}}
\date{\normalsize\todaysdate}
\maketitle

\begin{abstract}
We construct a $3$-variable enrichment of the Lawrence-Krammer-Bigelow (LKB) representation of the braid groups, which is the limit of a pro-nilpotent tower of representations having the original LKB representation as its bottom layer. We also construct analogous pro-nilpotent towers of representations of surface braid groups and loop braid groups.

\

\noindent 2020 \textit{Mathematics Subject Classification}: Primary: 20C12, 20F36; Secondary: 20F14, 20J05, 55R80, 57M07, 57M10.

\noindent \emph{Key words and phrases}: Braid groups, surface braid groups, loop braid groups, homological representations, pro-nilpotent groups.
\end{abstract}

\tableofcontents

\section*{Introduction}
\addcontentsline{toc}{section}{Introduction}

The Lawrence-Krammer-Bigelow representation was first introduced by Lawrence~\cite{Lawrence1} as part of a more general family of representations of the braid group $\B_{n}$. It was shown, independently by Bigelow~\cite{bigelow2001braid} and by Krammer~\cite{KrammerLK}, to be faithful, thus proving that $\B_{n}$ is linear; in other words, the braid group $\B_{n}$ acts faithfully on a finite-dimensional vector space.

This representation is constructed via the action of $\B_{n}$, which is also the mapping class group of the $n$-times punctured $2$-disc $\bD_{n}$, on the configuration space $C_{2}(\bD_{n})$ of two unordered, distinct points in $\bD_{n}$. There is a certain local system $\cL_{2}$ on $C_{2}(\bD_{n})$, defined over the ground ring $\bZ[q^{\pm 1},t^{\pm 1}] = \bZ[\bZ^2]$, which is preseved by this action of $\B_{n}$. The induced action on the second Borel-Moore homology group
\begin{equation}
\label{eq:LKB}
V(2) \coloneqq H_{2}^{\BM}(C_{2}(\bD_{n});\cL_{2})
\end{equation}
is the \emph{Lawrence-Krammer-Bigelow representation}. More generally, considering configurations of $k\geq 2$ points in $\bD_{n}$, one obtains the family of \emph{Lawrence representations}
\begin{equation}
\label{eq:Lawrence}
V(k) \coloneqq H_{k}^{\BM}(C_{k}(\bD_{n});\cL_{k})
\end{equation}
of the braid group $\B_{n}$. The $\bZ[\bZ^2]$-module $V(k)$ is free and has rank $\binom{n+k-2}{k}$ \cite[Lem.~3.1]{BigelowHomrep}.

One main significance of the Lawrence-Krammer-Bigelow (LKB) representation is its use in the proof of the linearity of the braid groups. Another significance of the whole family of Lawrence representations is their deep connection to the Jones polynomial \cite{Lawrence1993,BigelowJones}, the $\mathfrak{sl}_N$ polynomials \cite{Lawrence1996,BigelowHOMFLY} and the coloured Jones polynomials \cite{AnghelTopologicalModel}.

\paragraph*{The pro-nilpotent LKB representation.}
Our first main construction upgrades the representation \eqref{eq:LKB} of $\B_{n}$ to a \emph{pro-nilpotent representation}: a compatible family of representations over the group rings $\bZ[Q_{r}]$ for each $Q_{r}$ in a tower of groups $Q_{\bullet}$, where the nilpotency class of $Q_{r}$ is $r-1$. See \S\ref{s:pro-nilpotent} for the precise definitions.

\begin{athm}\label{athm:pro-nilpotent}
There is a nilpotent tower of groups $Q_{\bullet}$ with $Q_{2} = \bZ^2$ and a pro-nilpotent representation of $\B_{n}$ over $Q_{\bullet}$ whose bottom layer is equal to \eqref{eq:LKB}.
\end{athm}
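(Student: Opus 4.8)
The plan is to build the tower $Q_\bullet$ as the \emph{relative lower central series tower} of $\pi_1(C_2(\bD_n))$ attached to the abelian quotient underlying $\cL_2$, to take Borel--Moore homology with the resulting enlarged (twisted) coefficients, and then to check that the mapping-class-group action propagates all the way up the tower.

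\emph{Step 1: the tower.} Fix a basepoint $*\in C_2(\bD_n)$ given by two points near $\partial\bD_n$, write $G=\pi_1(C_2(\bD_n),*)$, and let $\phi_2\colon G\twoheadrightarrow\bZ^2$ be the canonical surjection whose associated rank-one $\bZ[\bZ^2]$-local system is $\cL_2$ (recording the total winding of the configuration around the punctures and its self-winding). I would set $K_2=\ker\phi_2$, $K_{r+1}=[K_r,G]$, and $Q_r=G/K_r$. Then $Q_2=\bZ^2$, the quotients $Q_{r+1}\twoheadrightarrow Q_r$ assemble into a tower, each $K_r/K_{r+1}$ is central in $Q_{r+1}$, and since $[G,G]\subseteq K_2$ a short induction gives $\LCS_r(Q_r)=1$, so $Q_r$ is nilpotent of class at most $r-1$ (one checks equality holds). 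Thus $Q_\bullet$ is a nilpotent tower with $Q_2=\bZ^2$ in the sense of \S\ref{s:pro-nilpotent}.

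\emph{Step 2: the representations.} For each $r$ let $\cL_r$ be the local system on $C_2(\bD_n)$ given by the $\bZ[G]$-module $\bZ[Q_r]$ (equivalently, the regular cover of $C_2(\bD_n)$ with deck group $Q_r$), and put $V_r\coloneqq H_2^{BM}(C_2(\bD_n);\cL_r)$, a $\bZ[Q_r]$-module. The surjections $Q_{r+1}\twoheadrightarrow Q_r$ induce maps $\cL_{r+1}\to\cL_r$, hence a compatible system $\cdots\to V_3\to V_2$; and because $Q_2=\bZ^2$ one has $\cL_2=\bZ[q^{\pm1},t^{\pm1}]$ and $V_2=V(2)$ on the nose, by \eqref{eq:LKB}.

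\emph{Step 3: the $\B_n$-action, and the main obstacle.} The braid group $\B_n=\Mod(\bD_n)$ is represented by homeomorphisms fixing a collar of $\partial\bD_n$ pointwise, hence fixing $*$, so it acts on $G$; this action fixes $\phi_2$ (winding numbers are mapping-class invariants), therefore preserves each $K_r$ and descends to an action on the tower $Q_\bullet$, trivial on $Q_2$ but in general nontrivial above it. For $\mathbf f\in\B_n$ one then obtains a semilinear isomorphism $\mathbf f^{*}\cL_r\cong\cL_r$ covering the induced automorphism of $Q_r$, hence operators on the $V_r$ that are compatible along the tower and semilinear over the $\B_n$-action on $\bZ[Q_\bullet]$ --- exactly a pro-nilpotent representation of $\B_n$ over $Q_\bullet$, with second layer $V(2)$. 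I expect the technical heart to be precisely this step: choosing the $\B_n$-action on $G$ honestly (the usual basepoint subtleties of the LKB construction), checking that the semilinear identifications $\mathbf f^{*}\cL_r\cong\cL_r$ can be made compatibly enough to give a genuine action rather than one only defined up to conjugacy, and matching everything with the axioms of \S\ref{s:pro-nilpotent}; the $r=2$ layer is the classical LKB construction, so the new content is entirely in propagating it up the tower. A minor preliminary is to confirm that $\phi_2$ is exactly the map from whose kernel the relative lower central series should start, so that $Q_2$ is literally $\bZ^2$ and $V_2$ is literally \eqref{eq:LKB}.
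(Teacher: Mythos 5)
Your approach is genuinely different from the paper's, and it contains two gaps that I think are fatal as written.

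\emph{Different route.} You build your tower entirely inside $K\coloneqq\pi_1(C_2(\bD_n))$ as a ``relative lower central series'' of $\ker\phi_2$, i.e.\ $K_2=\ker\phi_2$, $K_{r+1}=[K_r,K]$, $Q_r=K/K_r$. The paper instead embeds $K$ into the larger partitioned braid group $\B_{2,n}$, sets $Q_r(\varphi)=\mathrm{im}\bigl(K/\LCS_r(K)\to\B_{2,n}/\LCS_r(\B_{2,n})\bigr)$, and then takes $\B_n$-coinvariants to get $Q_r^{\mathrm u}$. These towers agree at $r=2$ but are not obviously the same for $r\geq3$; all the technical leverage in the paper comes from having $\B_{2,n}$ and its lower central series at hand, which your construction discards.

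\emph{Gap 1: nilpotency class.} You write that $Q_r$ has nilpotency class at most $r-1$ and ``one checks equality holds.'' That equality is exactly the hard point, and it is not automatic: you need $\LCS_{r-1}(K)\not\subseteq K_r$ for every $r$. It can fail for other groups and other choices of $\phi_2$ (e.g.\ if the relative lower central series stabilises), and nothing in your setup rules that out here. This is precisely the role of the NCP/eNCP machinery of \S\ref{s:NCP}: the paper proves the needed non-degeneracy by exhibiting a surjection $\B_{2,n}\twoheadrightarrow\bZ^2\rtimes\mathfrak{S}_2$ (a group with non-stopping lower central series) that restricts to a surjection on $K$ and kills the section of $\B_n$, and then invoking Corollary~\ref{coro:lifting-eNCP} and Lemma~\ref{lem:e-nilpotency-class}. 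Without some replacement for that verification, your Step~1 is an assertion, not a proof, that $Q_\bullet$ is a nilpotent tower in the sense of Definition~\ref{def:nilpotent-tower-of-groups}.

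\emph{Gap 2: semilinear is not good enough.} In Step~3 you explicitly note that the $\B_n$-action on $Q_r$ is ``in general nontrivial above'' $r=2$, and that the induced operators on $V_r$ are only \emph{semilinear} over $\bZ[Q_r]$. But Definition~\ref{defn:pro-nilpotent-representation} asks for functors $\B_n\to\Mod_{\bZ[Q_r]}$, i.e.\ honest $\bZ[Q_r]$-linear automorphisms, not semilinear ones. A semilinear action is exactly the ``twisted'' situation that Remark~\ref{rmk:twisted-version-of-the-construction} warns about, and it does not satisfy the definition. The paper avoids this by replacing $Q_r$ with its coinvariants $Q_r^{\mathrm u}=(Q_r)_{\B_n}$ (Definition~\ref{defn:Qru}), which forces the induced $\B_n$-action on the coefficient group to be trivial; and then, in the specific case of $\B_{2,n}$, it computes (\S\ref{ss:Bn-ribbon}, using \cite[Proposition~3.11]{DPS}) that the coinvariants step is vacuous because the action on $Q_r$ was already trivial. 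Your construction neither takes coinvariants nor shows triviality, so even granting Gap~1 you have produced at best a ``twisted'' tower of representations, not a pro-nilpotent representation.

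A smaller omission: you produce comparison maps $V_{r+1}\to V_r$ but do not check that the canonical maps $V_{r+1}\otimes_{\bZ[Q_{r+1}]}\bZ[Q_r]\to V_r$ are \emph{isomorphisms}, which is required to upgrade from a weakly pro-nilpotent to a genuine pro-nilpotent representation (Definition~\ref{defn:pro-nilpotent-representation}). The paper handles this via Lemma~\ref{lem:criterion-for-comparison-isomorphisms} and Corollary~\ref{coro:BM-homology-lemma}, using the deformation retraction of $\bD_n$ onto a graph; this part is the most mechanical and would likely carry over to your setup, but it still needs to be said.

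In short: the idea of using a filtration of $\pi_1(C_2(\bD_n))$ refining $\ker\phi_2$ is a reasonable starting point, and it recovers $V(2)$ at level $2$; but to obtain the theorem you must (i) actually prove the nilpotency class of $Q_r$ is $r-1$, and (ii) either untwist the tower via coinvariants or show the $\B_n$-action on your $Q_r$ is trivial. The paper does both by passing to $\B_{2,n}$ and developing the eNCP machinery for exactly this purpose.
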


\paragraph*{Ribbon Lawrence representations.}
In fact, the pro-nilpotent representation of Theorem~\ref{athm:pro-nilpotent} is induced, for $n\geq 3$, by a representation of $\B_{n}$ over $\bZ[Q_{\infty}]$ for a certain subgroup $Q_{\infty}$ of the pro-nilpotent group $\lim(Q_{\bullet})$. This subgroup $Q_{\infty}$ is isomorphic to the semi-direct product $\bZ^2 \rtimes_\varphi \bZ$ where $\varphi(1)$ is the automorphism of $\bZ^2$ that swaps the two summands, which in turn is isomorphic to the ribbon braid group on two strands $\RB_{2}$. This is a special case of a more general construction involving the level $k$ Lawrence representation \eqref{eq:Lawrence}. In the following theorem we denote the ribbon braid group on $k$ strands by $\RB_{k} \cong \bZ^k \rtimes \B_{k}$.

\begin{athm}\label{athm:ribbon-Lawrence}
There is a well-defined representation
\begin{equation}
\label{eq:ribbon-Lawrence}
\mathscr{RL}_{k} \colon \B_{n} \longrightarrow \mathrm{Aut}_{\bZ[\RB_{k}]}(V_{\R}(k))
\end{equation}
that recovers \eqref{eq:Lawrence} after reducing along the abelianisation $\RB_{k} \twoheadrightarrow (\RB_{k})^{\ab} \cong \bZ^2$. For $k=2$ and $n\geq 3$, this representation induces the pro-nilpotent tower of representations of Theorem~\ref{athm:pro-nilpotent}.
\end{athm}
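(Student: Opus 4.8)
The plan is to realise $\mathscr{RL}_k$ as the monodromy action of $\B_n=\Mod(\bD_n,\partial)$ on the Borel--Moore homology of the configuration space $C_k(\bD_n)$ with coefficients in the regular $\bZ[\RB_k]$-local system pulled back along a canonical homomorphism $\rho_k\colon\pi_1(C_k(\bD_n))\to\RB_k$ refining the standard Lawrence monodromy. The crux will be that, with the usual collar conventions, $\B_n$ acts genuinely on $\pi_1(C_k(\bD_n))$ (not merely up to conjugacy) and fixes $\rho_k$ on the nose.

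To construct $\rho_k$, write $\RB_k=\bZ^k\rtimes\B_k$, with $\B_k$ permuting the factors of $\bZ^k$ through $\B_k\twoheadrightarrow\mathfrak{S}_k$. Fix a basepoint configuration $\{x_1,\dots,x_k\}$ inside a collar of $\partial\bD$, together with fixed arcs in that collar joining them. To a loop $\gamma$ in $C_k(\bD_n)$ I assign: its underlying braid $\beta\in\B_k$, via the $\pi_1$-surjection $\pi_1(C_k(\bD_n))\twoheadrightarrow\pi_1(C_k(\bD))=\B_k$ obtained by filling in the punctures; and the vector $w(\gamma)\in\bZ^k$ whose $j$-th entry is the image under the total-winding functional $\Sigma\colon H_1(\bD_n)\cong\bZ^n\to\bZ$ of the class of the $j$-th strand of $\gamma$, closed up by the collar arcs. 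A short cocycle check gives $w(\gamma_1\gamma_2)=w(\gamma_1)+\beta_1\cdot w(\gamma_2)$, so $\rho_k(\gamma)\coloneqq(w(\gamma),\beta)$ defines a homomorphism onto $\bZ^k\rtimes\B_k=\RB_k$; composing with the abelianisation $\RB_k\twoheadrightarrow(\RB_k)^{\ab}\cong\bZ^2$ returns the classical Lawrence monodromy $\pi_1(C_k(\bD_n))\to H_1(C_k(\bD_n))=\bZ^2$, the $\bZ^k$-summand collapsing to the winding variable and the $\B_k$-factor to the writhe variable. Let $\cL^{\R}_k$ be the local system on $C_k(\bD_n)$ with fibre $\bZ[\RB_k]$ and monodromy $\rho_k$ followed by left multiplication, with its commuting right $\bZ[\RB_k]$-module structure, and put $V_{\R}(k)\coloneqq H_k^{BM}(C_k(\bD_n);\cL^{\R}_k)$; the previous sentence identifies $\cL^{\R}_k\otimes_{\bZ[\RB_k]}\bZ[(\RB_k)^{\ab}]$ with the Lawrence local system $\cL_k$.

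Now let $\phi\in\B_n$, represented by a homeomorphism, and isotopies, supported away from the collar, so that $\phi$ fixes the basepoint configuration pointwise and $\phi_*$ is a well-defined automorphism of $\pi_1(C_k(\bD_n))$. I claim $\rho_k\circ\phi_*=\rho_k$. On the $\B_k$-coordinate this holds because $\phi$ extends over the punctures to a homeomorphism of $\bD$ fixing $\partial\bD$, hence isotopic to the identity by the Alexander trick, so $\B_n$ acts trivially on $\pi_1(C_k(\bD))=\B_k$. On the $\bZ^k$-coordinate it holds because $\B_n$ acts on $H_1(\bD_n)\cong\bZ^n$ by permuting the standard generators, hence fixes $\Sigma$, while the closing-up arcs lie in the $\phi$-fixed collar, so $w(\phi_*\gamma)=w(\gamma)$. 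Consequently $\phi^{*}\cL^{\R}_k=\cL^{\R}_k$ on the nose, $\phi$ induces a right $\bZ[\RB_k]$-linear automorphism of $V_{\R}(k)$, and functoriality of Borel--Moore homology assembles these into the required homomorphism $\mathscr{RL}_k\colon\B_n\to\mathrm{Aut}_{\bZ[\RB_k]}(V_{\R}(k))$. I expect the main obstacle to be exactly this package: setting up $\rho_k$, and the $\B_n$-action, precisely enough that the equality $\rho_k\circ\phi_*=\rho_k$ holds literally rather than up to an inner automorphism of $\RB_k$ (which would yield the representation only up to twist).

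For the reduction statement, the natural change-of-coefficients map $V_{\R}(k)\otimes_{\bZ[\RB_k]}\bZ[(\RB_k)^{\ab}]\to V(k)$ --- using $\cL^{\R}_k\otimes_{\bZ[\RB_k]}\bZ[(\RB_k)^{\ab}]=\cL_k$ --- is $\B_n$-equivariant by naturality, the target carrying the action \eqref{eq:Lawrence}; and it is an isomorphism because $C_k(\bD_n)$ is an oriented open $2k$-manifold homotopy equivalent to a $k$-dimensional complex, so $H_i^{BM}(C_k(\bD_n);\cL^{\R}_k)\cong H^{2k-i}(C_k(\bD_n);\cL^{\R}_k)=0$ for $i<k$ by Poincaré--Lefschetz duality, whence in the hyper-$\operatorname{Tor}$ spectral sequence for a bounded complex of free $\bZ[\RB_k]$-modules computing $H_\ast^{BM}(C_k(\bD_n);\cL^{\R}_k)$ (e.g.\ from a handle decomposition of $C_k(\bD_n)$ and its $\RB_k$-cover) only the term $H_k(\cdot)\otimes_{\bZ[\RB_k]}\bZ[(\RB_k)^{\ab}]$ survives in total degree $k$. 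Hence $\mathscr{RL}_k$ recovers \eqref{eq:Lawrence}. Finally, for $k=2$ we have $\RB_2=\bZ^2\rtimes\B_2=\bZ^2\rtimes_\varphi\bZ$ with $\varphi(1)$ the coordinate swap, which is the group $Q_\infty$; with $Q_r\coloneqq\RB_2/\LCS_r(\RB_2)$, a short computation shows $\LCS_2(\RB_2)$ is the anti-diagonal copy of $\bZ$ in $\bZ^2$ and $\LCS_{r+1}(\RB_2)=2\,\LCS_r(\RB_2)$, so $Q_2=(\RB_2)^{\ab}\cong\bZ^2$, each $Q_r$ is nilpotent of class exactly $r-1$, and $\bigcap_r\LCS_r(\RB_2)=0$, so $Q_\infty=\RB_2$ embeds in $\varprojlim_r Q_r$. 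Pushing $\mathscr{RL}_2$ forward along the maps $\bZ[\RB_2]\to\bZ[Q_r]$ gives a compatible family of representations over the $\bZ[Q_r]$ --- a pro-nilpotent representation of $\B_n$ over $Q_\bullet$ in the sense of \S\ref{s:pro-nilpotent} --- whose second layer is $\mathscr{RL}_2\otimes_{\bZ[\RB_2]}\bZ[Q_2]\cong V(2)$ with its Lawrence-Krammer-Bigelow action \eqref{eq:LKB}; this is the pro-nilpotent representation of Theorem~\ref{athm:pro-nilpotent}.
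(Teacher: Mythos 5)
Your construction of $\rho_k$ and the verification that $\rho_k \circ \phi_\ast = \rho_k$ (so that the local system is preserved on the nose, not merely up to twist) is essentially the paper's argument, phrased more concretely via winding numbers: the paper constructs the same map by composing an inclusion-induced homomorphism $\B_k(\bD_n) \to \pi_1(\bD_n) \wr \B_k$ with $\pi_1(\bD_n)\twoheadrightarrow\bZ$ and observes that this composite is $\B_n$-invariant because the Artin action on $\pi_1(\bD_n)$ permutes a free basis. Your treatment of the $k=2$ case at the end also matches the paper, which identifies $Q_\infty \cong \RB_2$ via \cite[Prop.\ 3.11]{DPS} and notes that the $\B_n$-action on it is trivial.

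Where you diverge is the reduction step, and there is a gap there. You claim $H^{BM}_i(C_k(\bD_n);\cL)\cong H^{2k-i}(C_k(\bD_n);\cL)$ by duality and conclude vanishing for $i<k$ from the homotopy dimension of $C_k(\bD_n)$. But in the paper's conventions $\bD_n$ is a compact disc minus $n$ interior points, so it has a boundary circle, and $C_k(\bD_n)$ is a $2k$-manifold \emph{with boundary} $C_k^\partial(\bD_n)$. Lefschetz duality then reads $H^{BM}_i(C_k(\bD_n);\cL)\cong H^{2k-i}\bigl(C_k(\bD_n),C_k^\partial(\bD_n);\cL\bigr)$, and the vanishing of this \emph{relative} cohomology above degree $k$ does not follow from $C_k(\bD_n)$ being homotopy equivalent to a $k$-complex (the boundary can contribute through the connecting maps). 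Nor can you simply pass to the interior: already for $k=1$ one has $H_1^{BM}(\bD_n)\cong\bZ^{n-1}$, matching the Lawrence rank $\binom{n-1}{1}$, whereas $H_1^{BM}(\mathrm{int}\,\bD_n)\cong\bZ^{n}$, so the module changes. The vanishing $H^{BM}_i(C_k(\bD_n);\cL)=0$ for $i\neq k$ (and freeness of $H^{BM}_k$) is true, but the paper's route to it is the non-expanding deformation retraction of $\bD_n$ onto a graph with $n-1$ open edges (Lemma~\ref{lem:BM-homology-lemma} and Corollary~\ref{coro:BM-homology-lemma}, going back to Bigelow), which handles the boundary correctly by construction. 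Once that input is in place, your spectral-sequence bookkeeping for the edge map $H_k(C_\ast)\otimes_R S\to H_k(C_\ast\otimes_R S)$ is correct, though the paper gets the same conclusion more directly from the fact that both sides are free on the path-components of $C_k(I_n)$.
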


In the above statement, the \emph{reduction} of $V_{\R}(k)$ along the abelianisation $\RB_{k} \twoheadrightarrow \bZ^2$ means the tensor product $V_{\R}(k) \otimes_{\bZ[\RB_{k}]} \bZ[\bZ^2]$, viewing $\bZ[\bZ^2]$ as a $\bZ[\RB_{k}]$-module via the map $\RB_{k} \twoheadrightarrow \bZ^2$.

In particular, for $k=2$, Theorem~\ref{athm:ribbon-Lawrence} upgrades the LKB representation, defined over $\bZ[q^{\pm 1},t^{\pm 1}]$, to a representation defined over the non-commutative three-variable Laurent polynomial ring
\begin{equation}
\label{eq:Lambda}
\Theta = \bZ[\bZ^2 \rtimes \bZ] = \bZ\langle q_{1}^{\pm 1},q_{2}^{\pm 1},t^{\pm 1} \rangle / (q_{1} q_{2} = q_{2} q_{1} , q_{1} t = t q_{2} , q_{2} t = t q_{1})
\end{equation}
that recovers the original LKB representation when setting $q_{1} = q_{2}$. We explicitly compute the matrices for this representation in a natural basis:
\begin{athm}\label{athm:3-variable-LKB}
The representation $\mathscr{RL}_{2}$ of $\B_{n}$ over the ring $\Theta$ is given concretely by assigning to the generators $\sigma_{i}$ of $\B_{n}$ the matrices depicted in Table~\ref{tab:LKB3} on page \pageref{tab:LKB3}.
\end{athm}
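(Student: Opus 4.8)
The statement is computational: granting the construction of $\mathscr{RL}_2$ from the proof of Theorem~\ref{athm:ribbon-Lawrence}, it remains to evaluate the action of the standard Artin generators $\sigma_1,\dots,\sigma_{n-1}$ of $\B_n$ in a convenient basis. The plan is to fix the classical geometric (``fork'' or multifork) basis $\{v_{j,k} : 1\leq j<k\leq n\}$ of the free $\bZ[\RB_2]$-module $V_{\R}(2)$, which has rank $\binom{n}{2}$, the rank of~\eqref{eq:Lawrence} for $k=2$, and then to compute each $\sigma_i\cdot v_{j,k}$ by the standard isotopy method: push the geometric cycle forward under the half-twist homeomorphism supported near the punctures $i$ and $i+1$, isotope it back into standard position, write the result as a $\Theta$-linear combination of basis classes, and read each coefficient off as the image in $\Theta=\bZ[Q_\infty]$ of the corresponding class in $\pi_1(C_2(\bD_n))$. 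Because $\sigma_i$ is supported near two adjacent punctures, only the position of $i$ relative to $\{j,k\}$ matters, and one is reduced to the usual handful of cases: the $v_{j,k}$ with $\{j,k\}$ disjoint from $\{i,i+1\}$ are fixed, while the cases $i\in\{j-1,j,k-1,k\}$ and their degenerate overlaps are treated one at a time.

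The one genuinely new ingredient, compared with the computation of the classical LKB matrices, is the coefficient ring: here the relevant local system on $C_2(\bD_n)$ is not the $\bZ[q^{\pm1},t^{\pm1}]$-valued one but the $\Theta=\bZ[Q_\infty]$-valued one classifying the homomorphism $\pi_1(C_2(\bD_n))\to Q_\infty\cong\bZ^2\rtimes\bZ\cong\RB_2$ underlying $\mathscr{RL}_2$ (from the proof of Theorem~\ref{athm:ribbon-Lawrence}; see also \S\ref{s:pro-nilpotent}). Informally, this map enriches the classical $\bZ^2$-valued monodromy by recording the winding of each of the two configuration points around the punctures of $\bD_n$ \emph{separately} --- giving the generators $q_1,q_2$ --- while $t$ records the half-twist that braids the two points; the defining relations $q_1t=tq_2$ and $q_2t=tq_1$ of $\Theta$ then express that this braiding interchanges the two windings, and the reduction $q_1=q_2$ collapses everything to the classical local system, in accordance with Theorem~\ref{athm:ribbon-Lawrence}. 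The subtlety is that $\Theta$ is non-commutative, so in each matrix entry one must keep track of the order in which $q_1,q_2$ and $t$ appear; this ordering is dictated by the explicit form of the map to $Q_\infty$, and it is precisely in the overlapping cases $i\in\{j-1,j,k-1,k\}$ --- where the braided strand and the swap of the two points both feed into the same coefficient --- that care is required. I expect this bookkeeping, rather than anything conceptual, to be the only real obstacle.

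Finally I would collate the resulting matrices into Table~\ref{tab:LKB3} and run two consistency checks. First, specialising $q_1=q_2=q$ must return the classical Krammer--Bigelow matrices for~\eqref{eq:Lawrence} with $k=2$; this is guaranteed by Theorem~\ref{athm:ribbon-Lawrence}, but it is a cheap and effective test of the bookkeeping. Second, the braid relations
\[ \sigma_i\sigma_{i+1}\sigma_i=\sigma_{i+1}\sigma_i\sigma_{i+1},\qquad \sigma_i\sigma_j=\sigma_j\sigma_i\ \ (|i-j|\geq 2), \]
must hold for the assigned matrices over $\Theta$; these again follow from Theorem~\ref{athm:ribbon-Lawrence}, but verifying them directly on the $n=3$ block is a good way to confirm that the non-commutative coefficients have been placed correctly. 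Once the homomorphism $\pi_1(C_2(\bD_n))\to Q_\infty$ has been pinned down, the rest is a careful refinement of Bigelow's isotopy argument whose whole content is to refrain from identifying $q_1$ with $q_2$.
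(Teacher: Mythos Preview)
Your outline is sound and would lead to a correct proof, but it differs from the paper's approach in two respects worth noting.

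First, the basis. You propose the classical fork basis $\{v_{j,k}:1\le j<k\le n\}$, whereas the paper uses a different free basis: configurations of two points on $n-1$ open arcs between consecutive punctures, indexed by $(n{-}1)$-tuples of non-negative integers summing to $2$. The paper remarks explicitly that it could not find LKB matrices in the literature written in this basis, so Table~\ref{tab:LKB3} is not a direct three-variable lift of the Bigelow or Krammer matrices you would obtain.

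Second, the mechanism for extracting coefficients. Rather than isotoping $\sigma_i\cdot v$ back to standard position and decomposing it by hand, the paper sets up a bilinear intersection pairing
\[
\langle-,-\rangle\colon V_\R(2)\otimes V_\R(2)^\partial\longrightarrow\Theta
\]
between Borel--Moore classes and ordinary relative classes, together with an explicit ``dual'' family $[L(k_1,\ldots,k_{n-1})^\partial]$ satisfying $\langle[L(\underline{k})],[L(\underline{k}')^\partial]\rangle=\delta_{\underline{k},\underline{k}'}$. Each matrix entry is then the single pairing $\langle[\sigma_i(L(\underline{k}))],[L(\underline{k}')^\partial]\rangle$, computed by a closed formula as a sum over transverse intersection points in $C_2(\bD_n)$, each contributing $\pm\phi(\ell_p)\in\Theta$ for an explicit loop $\ell_p$. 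This pairing approach pays off precisely at the point you flag as delicate: over the non-commutative ring $\Theta$, the ordering of $q_1,q_2,t$ in each entry is dictated mechanically by the word $\ell_p$ in $\pi_1(C_2(\bD_n))$, and the signs are controlled by a fixed orientation convention tied to the chosen paths to the basepoint. Your isotopy-and-decompose method would reach the same answers (in a different basis), but the intersection form turns the bookkeeping you anticipate into a uniform recipe rather than a case-by-case argument.
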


The representation $\mathscr{RL}_{2}$ and each layer of the pro-nilpotent LKB representation of Theorem~\ref{athm:pro-nilpotent} are faithful, as a consequence of the faithfulness of the LKB representation (see Proposition~\ref{prop:faithfulness}).

We also note that the representation $\mathscr{RL}_{2}$ of $\B_{n}$ over $\Theta$ may be embedded into a complex representation (of twice the dimension) using an embedding of $\Theta$ into the matrix ring $\mathrm{Mat}_{2}(\bC)$; this makes certain properties of the representation, such as questions of irreducibility, more tractable. In more detail, $V_\R(2)$ is free as a module over $\Theta$ and an embedding of rings $\Theta \hookrightarrow \mathrm{Mat}_{2}(\bC)$ induces an embedding of groups $\GL_m(\Theta) \hookrightarrow \GL_{2m}(\bC)$ for any $m$. Such an embedding is given for example as follows.

\begin{aprop}\label{aprop:embedding-into-matrix-ring}
In the notation of \eqref{eq:Lambda}, an embedding $\Theta \hookrightarrow \mathrm{Mat}_{2}(\bC)$ is given by the assignments
\begin{equation}
\label{eq:embedding-into-matrix-ring}
q_{1} \longmapsto \left(\begin{matrix} w & 0 \\ 0 & x \end{matrix}\right)
\qquad
q_{2} \longmapsto \left(\begin{matrix} x & 0 \\ 0 & w \end{matrix}\right)
\qquad
t \longmapsto \left(\begin{matrix} 0 & y \\ z & 0 \end{matrix}\right)
\end{equation}
for any choice of $w,x,y,z \in \bC$ such that $\{ w , x , yz \} \subset \bC$ is an algebraically independent subset.
\end{aprop}

Explicit formulas for this embedding of the representation $\mathscr{RL}_{2}$ into a representation over $\bC$ are obtained by applying the assignments \eqref{eq:embedding-into-matrix-ring} to each entry of the matrices in Table~\ref{tab:LKB3} (page \pageref{tab:LKB3}).

\paragraph*{More pro-nilpotent representations.}
The construction of the pro-nilpotent representation of Theorem~\ref{athm:pro-nilpotent} follows a general recipe that we describe in \S\ref{s:general-recipe}. The key ingredient in this recipe, if one wishes to construct a pro-nilpotent representation of a group $\Gamma$, is a surjection of groups
\begin{equation}
\label{eq:surjection-of-groups}
G \relbar\joinrel\twoheadrightarrow \Gamma .
\end{equation}
Whether or not the recipe produces a pro-nilpotent representation on this input depends on the properties of the lower central series of $G$, and in fact in a more subtle way on the interaction between the lower central series of $G$ and the lower central series of $\mathrm{ker}(G \twoheadrightarrow \Gamma)$. For the construction of Theorem~\ref{athm:pro-nilpotent}, the surjection \eqref{eq:surjection-of-groups} is $\B_{2,n} \twoheadrightarrow \B_{n}$, where $\B_{2,n}$ is the partitioned braid group with $n+2$ strands partitioned into two blocks of sizes $n$ and $2$ respectively, and the surjection forgets the two strands of the second block.

Our construction applies to many other settings of this form, providing a wide variety of pro-nilpotent representations of surface braid groups $\B_{n}(S)$ and of (extended) welded braid groups $\wB_{n}$
and $\exwB_{n}$ (see \S\ref{s:LBn} for background on these).

\begin{athm}\label{athm:other-pro-nilpotent-reps}
There are pro-nilpotent representations of the groups $\Gamma$, induced by the surjections \eqref{eq:surjection-of-groups}, for each of the pairs listed in Table~\ref{table:pro-nilpotent-representations} on page \pageref{table:pro-nilpotent-representations}.
\end{athm}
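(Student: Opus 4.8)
The strategy is to feed each surjection $G\twoheadrightarrow\Gamma$ recorded in Table~\ref{table:pro-nilpotent-representations} into the general construction of \S\ref{s:general-recipe}, and to verify, case by case, the hypothesis under which that construction is guaranteed to output a pro-nilpotent representation. Recall the shape of the recipe: given $G\twoheadrightarrow\Gamma$ with kernel $K$, realised geometrically so that $K$ is the fundamental group of a configuration space on which $\Gamma$ acts --- as a mapping class group, or, in the welded cases, as a group of automorphisms --- one forms the tower of nilpotent quotients $K/\LCS_r(K)$, the associated tower of covers of that configuration space, and the resulting tower of $\bZ[K/\LCS_r(K)]$-modules in Borel-Moore homology, carrying the $\Gamma$-action induced by the outer action of $\Gamma$ on $K$. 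By the criterion of \S\ref{s:general-recipe}, the output is a genuine pro-nilpotent representation --- whose $r$-th layer is a representation over $\bZ[K/\LCS_r(K)]$, a group nilpotent of class $r-1$, with bottom layer the expected Lawrence-type representation --- precisely when the lower central series of $K$ interacts suitably with that of $G$: concretely, one needs a property of $\LCS_\bullet(G)$ together with a compatibility relating $\LCS_\bullet(K)$ and $\LCS_\bullet(G)$, such as $\LCS_r(K)=K\cap\LCS_r(G)$ for all $r$ (equivalently, injectivity of the natural map on associated graded Lie rings), or a suitable refinement. No extra condition on the $\Gamma$-action is needed; for the braid row it happens moreover to be trivial on the bottom layer, which is why the LKB representation is honestly $\bZ[\bZ^2]$-linear there. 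So the theorem reduces to checking the lower-central-series hypothesis for each row.

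\textbf{The rows.} The braid-group row, $\B_{2,n}\twoheadrightarrow\B_n$ with $K=\pi_1(C_2(\bD_n))$, is the prototype: it is exactly what underlies Theorems~\ref{athm:pro-nilpotent} and~\ref{athm:ribbon-Lawrence} (where one moreover identifies a single $\Gamma$-fixed subgroup $Q_\infty\cong\RB_2$ of $\mathrm{lim}(K/\LCS_r(K))$ through which everything factors), so nothing new is required. For the surface-braid rows, $\Gamma=\B_n(S)$ and $G$ is the partitioned surface braid group obtained by adjoining a block of two strands, so that $K$ is the fundamental group of the space of configurations of two points in $S$ punctured at the $n$ marked points. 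Here I would verify the hypothesis from the known descriptions of the lower central series of (partitioned) surface braid groups, treating the exceptional surfaces --- the sphere and projective plane (whose braid groups have torsion), and the torus and Klein bottle --- separately from closed orientable surfaces of genus $\geqslant 2$ and from surfaces with non-empty boundary, since their lower central series behave atypically. For the welded rows, the relevant surjections are the partitioned welded braid group onto $\wB_n$ (and its extended analogue onto $\exwB_n$) obtained by forgetting a block of two strands, and $K$ is the corresponding kernel --- described either through the loop-braid / configuration-of-circles picture or directly from the permutation-conjugacy presentations of these groups; one then verifies the hypothesis from the known structure of the lower central series of the welded braid groups and their partitioned analogues, together with the analogous facts for the extended groups with their additional ``wen'' generators.

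\textbf{Main obstacle.} The hard part is the surface-braid case. Confirming the lower-central-series hypothesis there requires controlling the lower central series of the partitioned surface braid groups and of their kernels precisely enough to identify each $K/\LCS_r(K)$ together with its $\Gamma$-module structure, and this genuinely depends on the surface: the exceptional surfaces (the sphere and the projective plane above all, and to a lesser degree the torus and the Klein bottle) have braid groups whose lower central series are not ``generic'', so this is where the real work lies --- which is presumably why the table records the particular surfaces for which the construction succeeds rather than claiming it for all of them. A secondary technical point, common to the surface and welded rows, is to make the geometric set-up rigorous: exhibiting the configuration space so that $\Gamma$ acts on it compatibly with $G\twoheadrightarrow\Gamma$, and checking that the Borel-Moore homology of the nilpotent covers is finitely generated over $\bZ[K/\LCS_r(K)]$, so that the layers of the tower really are representations of the expected size.
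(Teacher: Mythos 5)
Your proposal misidentifies both the tower of quotients and the crucial hypothesis, and in several places contradicts what the paper actually proves.

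\textbf{The tower is not $K/\LCS_r(K)$.} You propose to form the tower $K/\LCS_r(K)$ and take Borel-Moore homology over $\bZ[K/\LCS_r(K)]$. The paper does \emph{not} use this tower: it uses $Q^{\mathrm{u}}_r(\varphi)$, the $\Gamma$-coinvariants of the image of $K/\LCS_r(K)\to G/\LCS_r(G)$. In general this is a strictly smaller quotient of $K$ than $K/\LCS_r(K)$. The issue is precisely that the local system associated to $K\twoheadrightarrow K/\LCS_r(K)$ is not $\Gamma$-invariant: the induced maps on homology commute with the $\bZ[K/\LCS_r(K)]$-module structure only up to a twist (see Remark~\ref{rmk:twisted-version-of-the-construction}). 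Passing to $Q^{\mathrm{u}}_r$ is what makes the quotient $\Gamma$-invariant, so the claim that ``no extra condition on the $\Gamma$-action is needed'' is wrong: the $\Gamma$-equivariance is encoded in the definition of the quotient itself.

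\textbf{The hypothesis is eNCP, not $\LCS_r(K)=K\cap\LCS_r(G)$.} You propose verifying an equality between the lower central series of $K$ and its restriction from $G$ (injectivity on associated graded Lie rings). This is neither the condition the paper needs nor even in the same direction. What is actually required is that $\varphi\colon K\to G$ be \emph{eNCP}, i.e.\ $\varphi(\LCS_r(K))\not\subseteq\langle\LCS_{r+1}(G)\rangle_\Gamma$ for all $r$ (Definition~\ref{def:encp}) --- a non-containment, not an equality, formulated in terms of the $\Gamma$-span rather than the lower central series alone. The paper's whole verification strategy is Corollary~\ref{coro:lifting-eNCP}/Remark~\ref{rmk:lifting-eNCP}: produce one quotient $G'$ of $G$ with non-stopping lower central series so that $K\twoheadrightarrow G'$ is surjective and $\Gamma\dashrightarrow G\to G'$ is zero. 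This reduces each row of the table to a single explicit group-theoretic check (using results from \cite{DPS}) and is the real engine of the proof; your proposed ``compatibility on graded Lie rings'' would neither be verified nor would it suffice.

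\textbf{Wrong list of surfaces.} The paper's surface rows assume $S$ non-closed (see the caption of Table~\ref{table:pro-nilpotent-representations}), and the exceptional cases it isolates are the disc, annulus and M\"obius band. You propose treating the sphere, projective plane, torus and Klein bottle as closed surfaces; none of these appear. This is not a cosmetic difference: the construction relies on the split fibration sequence \eqref{eq:input-fibration-sequence-surface-braids} (forgetting blocks of strands, with a section given by pushing points in from the boundary), which requires nonempty boundary.

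\textbf{The welded rows give only weakly pro-nilpotent representations.} The table marks the $\wB_n$ and $\exwB_n$ rows with a dagger precisely because the space $X$ there is a configuration space of circles (or of points in a non-disc $3$-manifold) to which Corollary~\ref{coro:BM-homology-lemma} does not apply, so the comparison maps \eqref{eq:comparison-homomorphism} are not known to be isomorphisms. Your secondary technical point about ``finite generation of Borel-Moore homology'' also misses the real criterion: what is needed for the genuine (non-weak) pro-nilpotent case is that $X$ deformation retracts onto a graph as in Corollary~\ref{coro:BM-homology-lemma}, making the Borel-Moore homology \emph{free} so that the base-change maps are isomorphisms.
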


\begin{table}[p]
    \centering
    \begin{tikzpicture}
    [x=1.8mm,y=0.6mm]
    \begin{scope}
        \draw[black!80] (0,0) rectangle (60,-60);
        \draw[blue!80] (11,-11) rectangle (39,-39);
        \draw[red!80] (31,-31) rectangle (59,-59);
        \node at (5,5) {$101$};
        \node at (15,5) {$200$};
        \node at (25,5) {$110$};
        \node at (35,5) {$020$};
        \node at (45,5) {$011$};
        \node at (55,5) {$002$};
        \node[anchor=east] at (-1,-5) {$101$};
        \node[anchor=east] at (-1,-15) {$200$};
        \node[anchor=east] at (-1,-25) {$110$};
        \node[anchor=east] at (-1,-35) {$020$};
        \node[anchor=east] at (-1,-45) {$011$};
        \node[anchor=east] at (-1,-55) {$002$};
        \node at (5,-5) {$1$};
        \node at (5,-15) {$0$};
        \node at (5,-25) {$q_{2}$};
        \node at (5,-35) {$(1-t)q_{2}$};
        \node at (5,-45) {$1$};
        \node at (5,-55) {$0$};
        \node at (15,-5) {$0$};
        \node at (15,-15) {$1$};
        \node at (15,-25) {$1$};
        \node at (15,-35) {$1$};
        \node at (15,-45) {$0$};
        \node at (15,-55) {$0$};
        \node at (25,-5) {$0$};
        \node at (25,-15) {$0$};
        \node at (25,-25) {$-q_{2}$};
        \node at (25,-35) {$(t-1)q_{2}$};
        \node at (25,-45) {$0$};
        \node at (25,-55) {$0$};
        \node at (35,-5) {$0$};
        \node at (35,-15) {$0$};
        \node at (35,-25) {$0$};
        \node at (35,-35) {$-tq_{1}q_{2}$};
        \node at (35,-45) {$0$};
        \node at (35,-55) {$0$};
        \node at (45,-5) {$0$};
        \node at (45,-15) {$0$};
        \node at (45,-25) {$0$};
        \node at (45,-35) {$(t-1)q_{1}q_{2}$};
        \node at (45,-45) {$-q_{1}$};
        \node at (45,-55) {$0$};
        \node at (55,-5) {$0$};
        \node at (55,-15) {$0$};
        \node at (55,-25) {$0$};
        \node at (55,-35) {$q_{1}q_{2}$};
        \node at (55,-45) {$q_{1}$};
        \node at (55,-55) {$1$};
        \node[anchor=north west,align=justify,text width=104mm] at (0,-65) {\small The $6 \times 6$ submatrix for the basis elements of the form $\cdots xyz\cdots $, with the $y$ in the $i$-th position. Here, and below, $\cdots$ indicates a string of $0$s. If $i=1$, the $x$ is omitted and we consider the red (bottom right) submatrix. If $i=n-1$, the $z$ is omitted and we consider the blue (top left) submatrix.};
    \end{scope}
    \begin{scope}[yshift=-65mm,xshift=-8mm]
        \draw[black!80] (0,0) rectangle (30,-30);
        \draw[blue!80] (1,-1) rectangle (19,-19);
        \node at (5,5) {$100$};
        \node at (15,5) {$010$};
        \node at (25,5) {$001$};
        \node[anchor=east] at (-1,-5) {$100$};
        \node[anchor=east] at (-1,-15) {$010$};
        \node[anchor=east] at (-1,-25) {$001$};
        \node at (5,-5) {$1$};
        \node at (5,-15) {$1$};
        \node at (5,-25) {$0$};
        \node at (15,-5) {$0$};
        \node at (15,-15) {$-q_{1}$};
        \node at (15,-25) {$0$};
        \node at (25,-5) {$0$};
        \node at (25,-15) {$q_{1}$};
        \node at (25,-25) {$1$};
        \node[anchor=north west,align=justify,text width=55mm] at (-1,-35) {\small The $3 \times 3$ submatrices for the basis elements of the form $\cdots 1\cdots xyz\cdots$, with the $y$ in the $i$-th position. If $i=n-1$ then the $z$ is omitted and we consider the blue (top left) submatrix. \\ There are $i-2$ copies of this submatrix (except when $i=1$, when there are none), corresponding to the possible choices in $\{1,\ldots,i-2\}$ for the position of the additional `$1$'.};
    \end{scope}
    \begin{scope}[yshift=-65mm,xshift=62mm]
        \draw[black!80] (0,0) rectangle (30,-30);
        \draw[red!80] (11,-11) rectangle (29,-29);
        \node at (5,5) {$100$};
        \node at (15,5) {$010$};
        \node at (25,5) {$001$};
        \node[anchor=east] at (-1,-5) {$100$};
        \node[anchor=east] at (-1,-15) {$010$};
        \node[anchor=east] at (-1,-25) {$001$};
        \node at (5,-5) {$1$};
        \node at (5,-15) {$1$};
        \node at (5,-25) {$0$};
        \node at (15,-5) {$0$};
        \node at (15,-15) {$-q_{2}$};
        \node at (15,-25) {$0$};
        \node at (25,-5) {$0$};
        \node at (25,-15) {$q_{2}$};
        \node at (25,-25) {$1$};
        \node[anchor=north west,align=justify,text width=55mm] at (-1,-35) {\small The $3 \times 3$ submatrices for the basis elements of the form $\cdots xyz\cdots 1\cdots$, with the $y$ in the $i$-th position. If $i=1$ then the $x$ is omitted and we consider the red (bottom right) submatrix. \\ There are $n-i-2$ copies of this submatrix (except when $i=n-1$, when there are none), corresponding to the possible choices in $\{i+2,\ldots,n-1\}$ for the position of the additional `$1$'.};
        \node at (0,-110) {};
    \end{scope}
    \end{tikzpicture}
    \caption{The matrices of the three-variable LKB representation over the ring \[ \Theta = \bZ[\bZ^2 \rtimes \bZ] = \bZ\langle q_{1}^{\pm 1},q_{2}^{\pm 1},t^{\pm 1} \rangle / (q_{1} q_{2} = q_{2} q_{1} , q_{1} t = t q_{2} , q_{2} t = t q_{1}), \] indicating the action of the generator $\sigma_{i}$ of $\B_{n}$ for $1\leq i\leq n-1$. Recall that, as a $\Theta$-module, the representation is free of rank $\binom{n}{2}$, with basis given by ordered $(n-1)$-tuples of non-negative integers that sum to $2$. \\ \\ When $2\leq i\leq n-2$, the matrix consists of the $6 \times 6$ block described on the top row, together with $(i-2)+(n-i-2)=n-4$ copies of the $3 \times 3$ blocks described on the bottom row, together with the $\binom{n-3}{2} \times \binom{n-3}{2}$ identity matrix. In total, this is a square matrix of size $\binom{n}{2} = 6 + 3(n-4) + \binom{n-3}{2}$. \\ \\ When $i \in \{1,n-1\}$, the matrix consists of one of the $3 \times 3$ blocks described on the top row, together with $n-3$ copies of one of the $2 \times 2$ blocks described on the bottom row, together with the $\binom{n-2}{2} \times \binom{n-2}{2}$ identity matrix. In total, this is a square matrix of size $\binom{n}{2} = 3 + 2(n-3) + \binom{n-2}{2}$.}
    \label{tab:LKB3}
    \addcontentsline{toc}{subsection}{Table 1: Matrices for the three-variable LKB representation}
\end{table}

\bgroup
\def\arraystretch{1.5}
\begin{table}[p]
    \centering
    \begin{tabular}{|c|ll|c|}
        \cline{1-4}
        \multicolumn{4}{|c|}{\large The pro-nilpotent representations constructed in the present paper} \\
        \cline{1-4}
        $\Gamma$ & \multicolumn{2}{c|}{$G$} & $A$ \\
        \cline{1-4}
        \multirow{8}{*}{$\B_{n}$} & $\B_{2,n}$ && $\bZ^2$ \\
        & $\B_{2,\bk,n}$ & (each $k_{i}\geq 3$) & $\bZ^{\binom{l+2}{2}+l'+1}$ \\
        \cline{2-4}
        & $\B_{1,1,1,n}$ && $\bZ^6$ \\
        & $\B_{1,1,1,\bk,n}$ && $\bZ^{\binom{l+4}{2}+l'}$ \\
        \cline{2-4}
        & $\B_{2,2,n}$ && $\bZ^5$ \\
        & $\B_{2,2,\bk,n}$ && $\bZ^{\binom{l+3}{2}+l'+2}$ \\
        \cline{2-4}
        & $\B_{1,2,n}$ && $\bZ^4$ \\
        & $\B_{1,2,\bk,n}$ && $\bZ^{\binom{l+3}{2}+l'+1}$ \\
        \cline{1-4}
        \multirow{4}{*}{$\B_{n}(S)$} & $\B_{2,\bk,n}(S)$ & ($S \neq \bD^2$) & \multirow{4}{*}{Prop.~\ref{prop:abelian-quotients-2}} \\
        \cline{2-3}
        & $\B_{1,\bk,n}(S)$ & ($S \notin \{\bD^2 , \text{Ann} , \text{M{\"o}b}\}$) & \\
        \cline{2-3}
        & $\B_{1,\bk,n}(\text{M{\"o}b})$ & ($\bk \neq \varnothing$) & \\
        \cline{2-3}
        & $\B_{1,1,\bk,n}(\text{Ann})$ && \\
        \cline{1-4}
        \multirow{5}{*}{$\wB_{n}$\textsuperscript{$\dagger$}} & \multirow{5}{*}{$\wB(\lambda_P,\lambda_{S_+},\lambda_S)$} & \multicolumn{1}{|l|}{$(\lambda_P,\lambda_{S_+},\lambda_S) \succcurlyeq (\varnothing,\{n,b\},\varnothing)$} & \multirow{5}{*}{Prop.~\ref{prop:abelian-quotients-3}} \\
        \cline{3-3}
        && \multicolumn{1}{|l|}{$(\lambda_P,\lambda_{S_+},\lambda_S) \succcurlyeq (\varnothing,n,b)$} & \\
        \cline{3-3}
        && \multicolumn{1}{|l|}{$(\lambda_P,\lambda_{S_+},\lambda_S) \succcurlyeq (\varnothing,\{n,1,1\},\varnothing)$} & \\
        \cline{3-3}
        && \multicolumn{1}{|l|}{$(\lambda_P,\lambda_{S_+},\lambda_S) \succcurlyeq (2,n,\varnothing)$} & \\
        \cline{3-3}
        && \multicolumn{1}{|l|}{$(\lambda_P,\lambda_{S_+},\lambda_S) \succcurlyeq (\varnothing,n,1)$} & \\
        \cline{1-4}
        \multirow{5}{*}{$\exwB_{n}$\textsuperscript{$\dagger$}} & \multirow{5}{*}{$\wB(\lambda_P,\lambda_{S_+},\lambda_S)$} & \multicolumn{1}{|l|}{$(\lambda_P,\lambda_{S_+},\lambda_S) \succcurlyeq (\varnothing,b,n)$} & \multirow{5}{*}{Prop.~\ref{prop:abelian-quotients-3}} \\
        \cline{3-3}
        && \multicolumn{1}{|l|}{$(\lambda_P,\lambda_{S_+},\lambda_S) \succcurlyeq (\varnothing,\varnothing,\{n,b\})$} & \\
        \cline{3-3}
        && \multicolumn{1}{|l|}{$(\lambda_P,\lambda_{S_+},\lambda_S) \succcurlyeq (\varnothing,\{1,1\},n)$} & \\
        \cline{3-3}
        && \multicolumn{1}{|l|}{$(\lambda_P,\lambda_{S_+},\lambda_S) \succcurlyeq (2,i,n)$, $i\geq 1$} & \\
        \cline{3-3}
        && \multicolumn{1}{|l|}{$(\lambda_P,\lambda_{S_+},\lambda_S) \succcurlyeq (\varnothing,\varnothing,\{n,1\})$} & \\
        \cline{1-4}
    \end{tabular}
    \caption{The pro-nilpotent representations constructed in the present paper. \\ \\  \textit{Columns}: The first column lists the group $\Gamma$ being represented; the second column lists the auxiliary group $G$ whose (evident) surjection onto $\Gamma$ induces the pro-nilpotent representation. The third column lists the abelian group $A$ whose group ring $\bZ[A]$ is the ground ring for the bottom ($r=2$) layer of the pro-nilpotent representation. For the first two rows, we also have an explicit description of the ground ring of the limit of the tower of representations, as described in Proposition~\ref{prop:residually-nilpotent-quotient}. \\ \\ \textit{Further notation}: In several rows we have fixed an $l$-tuple $\bk = (k_{1},\ldots,k_{l})$ of positive integers and $l' \leq l$ denotes the number of $1\leq i\leq l$ such that $k_{i} \geq 2$. The surface $S$ is assumed to be non-closed, but it may have infinite type. The letter $b$ denotes either one of $\{2,3\}$. The notation $\wB(\lambda_P,\lambda_{S_+},\lambda_S)$ is explained in \S\ref{s:LBn} and the relation $(x_{1},y_{1},z_{1}) \succcurlyeq (x_{2},y_{2},z_{2})$ between triples of partitions means that $x_{2}$ is a sub-partition of $x_{1}$, $y_{2}$ is a sub-partition of $y_{1}$ and $z_{2}$ is a sub-partition of $z_{1}$. \\ \\ \textsuperscript{$\dagger$} For $\wB_{n}$ and $\exwB_{n}$ we construct \emph{weakly} pro-nilpotent representations; see Outlook~\ref{outlook-for-examples} and \S\ref{s:LBn} for why and see \S\ref{s:pro-nilpotent} for the definitions.}
    \label{table:pro-nilpotent-representations}
    \addcontentsline{toc}{subsection}{Table 2: The pro-nilpotent representations constructed in the present paper}
\end{table}
\egroup

\paragraph*{Outline.} The paper is organised as follows. Precise definitions of pro-nilpotent representations are given first in \S\ref{s:pro-nilpotent}. The key technical input for the existence of our pro-nilpotent representations is a refinement of the notion of the (non-)stopping of the lower central series of a group, which has been studied comprehensively for (partitioned) braid groups and their relatives in \cite{DPS}; this refinement is introduced in \S\ref{s:NCP}. In \S\ref{s:general-recipe} we give the general recipe for constructing pro-nilpotent representations of groups, which is a refinement of a recipe introduced by the authors in \cite{PSI}.
In \S\ref{ss:Bn-pro-nilpotent} we apply this in a special case to construct the pro-nilpotent LKB representation (Theorem~\ref{athm:pro-nilpotent}). The ribbon Lawrence representations (Theorem~\ref{athm:ribbon-Lawrence}) are then constructed in \S\ref{ss:Bn-ribbon}; in \S\ref{ss:Bn-matrices} we compute matrices (Theorem~\ref{athm:3-variable-LKB}) for the second ribbon Lawrence representation (which is also the limit of the pro-nilpotent LKB representation) and in \S\ref{ss:embedding-into-matrix-ring} we prove Proposition~\ref{aprop:embedding-into-matrix-ring}, which implies that the second ribbon Lawrence representation may be embedded into a representation over $\bC$. Finally, the construction of our other pro-nilpotent representations (Theorem~\ref{athm:other-pro-nilpotent-reps}) for surface braid groups and for loop braid groups is carried out in \S\ref{s:SBn} and \S\ref{s:LBn} respectively.

\paragraph*{Acknowledgements.}
We would like to thank Paolo Bellingeri and Emmanuel Wagner for interesting discussions related to the topic of this article.
We would like to thank an anonymous referee for the suggestion to look for embeddings of the ring $\Theta$ into a matrix ring over $\bC$, which impelled us to prove Proposition~\ref{aprop:embedding-into-matrix-ring}.
We would also like to thank a second anonymous referee for various helpful suggestions and corrections.
The first author was partially supported by a grant of the Romanian Ministry of Education and Research, CNCS - UEFISCDI, project number PN-III-P4-ID-PCE-2020-2798, within PNCDI III.
The second author was partially supported by the Institute for Basic Science IBS-R003-D1, by a Rankin-Sneddon Research Fellowship of the University of Glasgow and by the ANR Project AlMaRe ANR-19-CE40-0001-01.
In particular, the authors were able to make significant progress on the present article thanks to research visits to Glasgow and Bucharest, funded respectively by the School of Mathematics and Statistics of the University of Glasgow and the above-mentioned grant PN-III-P4-ID-PCE-2020-2798.
Finally, the two authors were also supported by a grant of the Ministry of Research, Innovation and Digitization, CNCS - UEFISCDI, project number PN-IV-P1-PCE-2023-2001, within PNCDI IV.

\section{Pro-nilpotent groups and representations}
\label{s:pro-nilpotent}

In general, a pro-object is a cofiltered diagram of objects in the appropriate category. In the case of nilpotent groups, we will consider just those of a certain form: inverse systems indexed by the natural numbers where each morphism is surjective and the groups have increasing nilpotency class:

\begin{defn}
\label{def:nilpotent-tower-of-groups}
A \emph{nilpotent tower of groups} $Q_{\bullet}$ is a sequence of surjective group homomorphisms
\[
\cdots \longtwoheadrightarrow Q_{r} \longtwoheadrightarrow Q_{r-1} \longtwoheadrightarrow \cdots \longtwoheadrightarrow Q_{2}
\]
where the nilpotency class of $Q_{r}$ is exactly $r-1$.
\end{defn}

\begin{eg}
\label{eg:inverse-system}
For example, if $G$ is a group that is not nilpotent and $\LCS_{r}(G)$ denotes the $r$-th term in its lower central series, in other words $\LCS_{1}(G) = G$ and $\LCS_{r}(G) = [\LCS_{r-1}(G),G]$ for $r\geq 2$, then the inverse system
\begin{equation}
\label{eq:inverse-system-LCQ}
\cdots \longtwoheadrightarrow G/\LCS_{r}(G) \longtwoheadrightarrow G/\LCS_{r-1}(G) \longtwoheadrightarrow \cdots \longtwoheadrightarrow G/\LCS_{2}(G) = G^{\ab}
\end{equation}
is a nilpotent tower of groups.
\end{eg}

Related to Example~\ref{eg:inverse-system}, we recall the following definition.

\begin{defn}
\label{d:pro-nilpotent-completion}
The \emph{pro-nilpotent completion} $\widehat{G}_{\mathrm{nil}}$ of a group $G$ is the inverse limit of \eqref{eq:inverse-system-LCQ}.
\end{defn}

Clearly, if $G$ is nilpotent, then $\widehat{G}_{\mathrm{nil}} \cong G$.

\begin{rmk}
\label{rmk:morphism-from-pronilpotent-completion}
The surjections $G \twoheadrightarrow G/\LCS_{r}(G)$ induce a morphism $G \to \widehat{G}_{\mathrm{nil}}$, which factors as
\begin{equation}
\label{eq:morphism-from-pronilpotent-completion}
G \relbar\joinrel\twoheadrightarrow G/\LCS_{\infty}(G) \lhook\joinrel\longrightarrow \widehat{G}_{\mathrm{nil}},
\end{equation}
where $\LCS_{\infty}(G)$ is the \emph{residue} of the group: $\LCS_{\infty}(G) = \bigcap_{i\geq 1} \LCS_{i}(G)$.
\end{rmk}

Any nilpotent tower of groups $Q_{\bullet}$ induces a sequence of functors
\begin{equation}
\label{eq:tower-of-module-categories}
\cdots \longrightarrow \Mod_{\bZ[Q_{r}]} \longrightarrow \Mod_{\bZ[Q_{r-1}]} \longrightarrow \cdots
\end{equation}
sending a $\bZ[Q_{r}]$-module $V$ to $V \otimes_{\bZ[Q_{r}]} \bZ[Q_{r-1}]$, where we view $\bZ[Q_{r-1}]$ as a $\bZ[Q_{r}]$-module via the ring homomorphism $\bZ[Q_{r}] \to \bZ[Q_{r-1}]$ induced by the given quotient $Q_{r} \twoheadrightarrow Q_{r-1}$.

\begin{defn}
\label{defn:pro-nilpotent-representation}
A \emph{pro-nilpotent representation} of a group $\Gamma$ is a choice of pro-nilpotent tower of groups $Q_{\bullet}$ together with a sequence of functors
\begin{equation}
\label{eq:sequence-of-functors}
\Gamma \longrightarrow \Mod_{\bZ[Q_{r}]}
\end{equation}
for $r\geq 2$ that commute up to natural isomorphism with \eqref{eq:tower-of-module-categories}. Concretely, this means that we have a $\Gamma$-representation $V_{r}$ over $\bZ[Q_{r}]$ for each $r\geq 2$ and isomorphisms $V_{r+1} \otimes_{\bZ[Q_{r+1}]} \bZ[Q_{r}] \cong V_{r}$ of $\Gamma$-representations over $\bZ[Q_{r}]$ for each $r\geq 2$.
\end{defn}

\begin{rmk}
The condition in Definition~\ref{def:nilpotent-tower-of-groups} that $Q_{r}$ must have nilpotency class equal to $r$ implies that the tower of representations involved in a pro-nilpotent representation must increase in complexity as $r\to\infty$.
\end{rmk}

There is also a weaker version of this notion, where we have only natural transformations rather than natural isomorphisms.

\begin{defn}
A \emph{weakly pro-nilpotent representation} of a group $\Gamma$ is a choice of pro-nilpotent tower of groups $Q_{\bullet}$ together with a $\Gamma$-representation $V_{r}$ over $\bZ[Q_{r}]$ for each $r\geq 2$ and homomorphisms
\[
V_{r+1} \otimes_{\bZ[Q_{r+1}]} \bZ[Q_{r}] \longrightarrow V_{r}
\]
of $\Gamma$-representations over $\bZ[Q_{r}]$ for each $r\geq 2$.
\end{defn}

\subsection{Pro-nilpotent representations vs.\ representations over inverse limits}

\begin{notation}
In general, we denote by $\lim_{r}(-)$ the inverse limit of an inverse system of objects indexed by $r$. For a nilpotent tower of groups $Q_{\bullet}$, we denote by $\widehat{Q}_{\bullet} := \lim_{r}(Q_r)$ its inverse limit. Thus in Example~\ref{eg:inverse-system} we have $\widehat{Q}_{\bullet} = \widehat{G}_{\mathrm{nil}}$.
\end{notation}

Any representation of $\Gamma$ over the group-ring $\bZ[\widehat{Q}_{\bullet}]$ induces a pro-nilpotent representation of $\Gamma$ over the pro-nilpotent tower of groups $Q_{\bullet}$. However, the converse does not hold. For simplicity, let us fix $k\geq 1$ and consider only representations that are free modules of rank $k$ over the ground ring. We are thus comparing homomorphisms
\[
\Gamma \longrightarrow \GL_{k}(\bZ[\widehat{Q}_{\bullet}]) = \GL_{k}(\bZ[\lim_{r}(Q_{r})])
\]
with compatible systems of homomorphisms $\Gamma \to \GL_{k}(\bZ[Q_{r}])$, in other words homomorphisms
\[
\Gamma \longrightarrow \lim_{r} (\GL_{k}(\bZ[Q_{r}])).
\]
The question is thus whether the canonical homomorphism
\begin{equation}
\label{eq:canonical-map}
\GL_{k}(\bZ[\lim_{r}(Q_{r})]) \longrightarrow \lim_{r} (\GL_{k}(\bZ[Q_{r}]))
\end{equation}
is an isomorphism of groups, i.e.~whether the endofunctor $\GL_{k}(\bZ[-])$ preserves limits (of this form). Whenever $k\geq 2$ it fails to be an isomorphism:

\begin{lem}
\label{lem:pronilpotent-counterexample}
Let $k\geq 2$ and $Q_{\bullet} = G/\LCS_{\bullet}(G)$ for $G$ equal to the Klein bottle group, i.e.\ the semi-direct product $\bZ \rtimes \bZ$ for the action $\bZ \curvearrowright \bZ$ where $1 \in \bZ$ acts by inversion.
Then \eqref{eq:canonical-map} is not surjective.
\end{lem}

Thus pro-nilpotent representations over a pro-nilpotent tower of groups $Q_{\bullet}$ do not all come from representations over the group-ring $\bZ[\widehat{Q}_{\bullet}]$ of the inverse limit of $Q_{\bullet}$.

\begin{proof}[Proof of Lemma~\ref{lem:pronilpotent-counterexample}]
Let us take $k=2$; the general case $k\geq 2$ will follow by an obvious immediate generalisation of the following argument. First note that $\LCS_{r}(G)$ is the subgroup $2^{r-1}\bZ$ of $\bZ \subseteq \bZ \rtimes \bZ$ for $r\geq 2$ (this may be computed directly, or one may apply \cite[Prop.~A.4]{DPS}), so that
\[
Q_{r} = G/\LCS_{r}(G) = \bZ/2^{r-1}\bZ \rtimes \bZ,
\]
where the generator $1 \in \bZ$ acts by inversion. The inverse limit $\widehat{Q}_{\bullet} = \widehat{G}_{\mathrm{nil}}$ is thus $\widehat{\bZ}_{2} \rtimes \bZ$, where $\widehat{\bZ}_{2}$ is the $2$-adic completion of $\bZ$. Recall that elements of $\widehat{\bZ}_{2}$ may be written as left-infinite strings $\cdots a_{3} a_{2} a_{1}$ where $a_{i} \in \{0,1\}$ and $n \in \bZ \subseteq \widehat{\bZ}_{2}$ corresponds to its expression in binary, continued to the left by an infinite string of $0$s. The quotient
\begin{equation}
\label{eq:truncation}
\widehat{\bZ}_{2} \relbar\joinrel\twoheadrightarrow \bZ/2^{r-1}\bZ
\end{equation}
truncates an infinite string to its rightmost $r-1$ digits. Continuing to write in binary, each quotient
\begin{equation}
\label{eq:remove-digit}
\bZ/2^{r-1}\bZ \relbar\joinrel\twoheadrightarrow \bZ/2^{r-2}\bZ
\end{equation}
removes the left-most digit from a string.

An element of the right-hand side of \eqref{eq:canonical-map} is a compatible sequence $M_{r}$ of invertible $2 \times 2$ matrices over the ring of polynomials in one variable $t$, whose exponents are elements of $\bZ/2^{r-1}\bZ \rtimes \bZ$, where \emph{compatible} means that $q_{r}(M_{r}) = M_{r-1}$, where $q_{r}$ applies $\eqref{eq:remove-digit} \rtimes \id_\bZ$ to each exponent of $t$. An element of the left-hand side of \eqref{eq:canonical-map} is an invertible $2 \times 2$ matrix $M$ over the ring of polynomials in one variable $t$, whose exponents are elements of $\widehat{\bZ}_{2} \rtimes \bZ$. The map \eqref{eq:canonical-map} is given by applying $\eqref{eq:truncation} \rtimes \id_\bZ$ to each exponent of $t$ in $M$.

Let $H$ be the subgroup of the right-hand side of \eqref{eq:canonical-map} consisting of sequences $M_{r}$, where each $M_{r}$ is of the form
\[
\left(\begin{matrix}
    1 & f_{r} \\
    0 & 1
\end{matrix}\right)
\]
for $f_{r} \in \bZ[\bZ/2^{r-1}\bZ \rtimes \bZ]$. Thus $H$ is naturally isomorphic to the underlying additive group of the ring $\lim_{r} \bZ[\bZ/2^{r-1}\bZ \rtimes \bZ]$. The pre-image $\eqref{eq:canonical-map}^{-1}(H)$ is the subgroup of matrices $M$ of the form
\[
\left(\begin{matrix}
    1 & f \\
    0 & 1
\end{matrix}\right)
\]
for $f \in \bZ[\widehat{\bZ}_{2} \rtimes \bZ]$; thus it is naturally isomorphic to the underlying additive group of this ring. We therefore just have to exhibit an element that is not in the image of the canonical homomorphism
\[
\bZ[\widehat{\bZ}_{2} \rtimes \bZ] \longrightarrow \lim_{r} \bZ[\bZ/2^{r-1}\bZ \rtimes \bZ],
\]
in other words a compatible sequence $f_{r}$ of polynomials with exponents in $\bZ/2^{r-1}\bZ \rtimes \bZ$ that cannot be obtained by truncation from a polynomial with exponents in $\widehat{\bZ}_{2} \rtimes \bZ$. For example, we may take the sequence
\[
f_{r} = \sum_{i=0}^{r-2} (t^{10\cdots 0 \rtimes 0} - 1),
\]
where $0\cdots 0$ indicates a string of zeros of length $i$. This sequence cannot arise as truncations of a single polynomial with exponents in $\widehat{\bZ}_{2} \rtimes \bZ$ since there is no upper bound on the length (in binary) of the exponents in the polynomials $f_{r}$.
\end{proof}

For completeness, we briefly consider also the case $k=1$ of the homomorphism \eqref{eq:canonical-map}. In this case it fails to be an isomorphism even in the simpler setting when $Q_{\bullet}$ is a tower of \emph{abelian} groups:

\begin{lem}
Let $k=1$ and let $Q_{\bullet}$ be the tower of abelian groups where $Q_r = \bZ/p^r$ and the maps $\bZ/p^{r+1} \to \bZ/p^r$ are reduction modulo $p^r$ for a prime $p\geq 5$. Then the map \eqref{eq:canonical-map} is not surjective.
\end{lem}
\begin{proof}
The inclusion of the trivial units $\{\pm 1\} \times G$ into the group of units $\bZ[G]^{\times}$ of the integral group ring $\bZ[G]$ induces a commutative square
\begin{equation}
\label{eq:inclusion-of-trivial-units}
\begin{tikzcd}
\bZ[\lim_{r}(Q_{r})]^{\times} \ar[rr,"(*)"] && \lim_{r}(\bZ[Q_{r}]^{\times}) \\
\{\pm 1\} \times \lim_{r}(Q_{r}) \ar[rr,"\cong"] \ar[u] && \lim_{r}(\{\pm 1\} \times Q_{r}) \ar[u,"(**)",swap]
\end{tikzcd}
\end{equation}
whose top horizontal arrow $(*)$ is the map \eqref{eq:canonical-map} for $k=1$ that we aim to prove is not surjective. The bottom horizontal arrow is an isomorphism because the endofunctor $\{\pm 1\} \times -$ preserves limits (as limits commute). The limit group $\lim_{r}(Q_r)$ is the additive group of $p$-adic integers $\widehat{\bZ}_p$, which is torsion-free abelian, hence orderable (see \cite{Levi1913}), and hence a unique product group (see \cite[\S 13, Lem.~1.7]{Passman77} for instance). As a consequence of \cite[Th.~1]{Strojnowski1980}, the group ring of a unique product group has only trivial units when the base ring is an integral domain (see \cite[\S 13.1]{Passman77} for further details), so the left-hand vertical arrow in \eqref{eq:inclusion-of-trivial-units} is an isomorphism. The maps $(*)$ and $(**)$ are therefore identified; we will show that $(**)$ is not surjective by constructing a compatible family of non-trivial units in the integral group rings $\bZ[Q_{r}] = \bZ[\bZ/p^r]$. Let us write $\bZ/p^r = \langle a_r \mid (a_r)^{p^r} = 1 \rangle$ and set $u_r := 1 - a_r + (a_r)^2 \in \bZ[\bZ/p^r]$. Then $u_r$ is a non-trivial unit for $p\geq 5$ (it is known as the \emph{alternating unit based on $a_r$ with parameter $3$}, and is a unit by \cite[Lem.~10.6]{Sehgal1993}). The maps of group rings $\bZ[\bZ/p^{r+1}] \to \bZ[\bZ/p^r]$ induced by reduction modulo $p^r$ send $a_{r+1}$ to $a_r$ and hence $u_{r+1}$ to $u_r$, so this is a compatible family of non-trivial units, in other words an element of $\lim_{r}(\bZ[Q_{r}]^{\times})$ that is not in the image of the right-hand vertical map $(**)$ of \eqref{eq:inclusion-of-trivial-units}. It follows that $(*)$ is not surjective.
\end{proof}

\section{NCP and eNCP homomorphisms}
\label{s:NCP}

As recalled in Example~\ref{eg:inverse-system}, the \emph{lower central series} of a group $G$ is the descending filtration of $G$ defined by $\LCS_{1}(G) = G$ and $\LCS_{r+1}(G) = [G,\LCS_{r}(G)]$. If $\LCS_{r}(G) = \LCS_{r+1}(G)$ for some $r$ (hence $\LCS_{r}(G) = \LCS_{r+i}(G)$ for all $i\geq 1$) we say that the lower central series of $G$ \emph{stops} (at $\LCS_{r}$). The stopping or non-stopping of the lower central series of partitioned braid groups and their relatives has been comprehensively studied in \cite{DPS}. In this section, we introduce an analogue of \emph{non-stopping lower central series} (a property of groups) for group homomorphisms (NCP) and for equivariant group homomorphisms (eNCP).

\begin{defn}
\label{def:ncp}
A group homomorphism $\varphi \colon K \to G$ is called \emph{nilpotency class preserving} (NCP) if, for each $r\geq 1$, we have $\varphi(\LCS_{r}(K)) \not\subseteq \LCS_{r+1}(G)$.
\end{defn}

\begin{rmk}
\label{rmk:NCP-and-non-stopping-LCS}
If $\varphi \colon K \to G$ is NCP, then both $K$ and $G$ have non-stopping lower central series, i.e., we have $\LCS_{r+1}(K) \neq \LCS_{r}(K)$ and $\LCS_{r+1}(G) \neq \LCS_{r}(G)$ for all $r\geq 1$. We also note that the lower central series of a group $G$ is non-stopping if and only if the identity $\id \colon G \to G$ is NCP. The notion of NCP is thus the natural analogue, for group homomorphisms, of the notion of non-stopping lower central series for groups.
\end{rmk}

\begin{defn}
\label{defn:Qr}
For any homomorphism $\varphi \colon K \to G$, define $Q_{r}(\varphi)$ to be the image of the induced homomorphism
\[
K/\LCS_{r} \longrightarrow G/\LCS_{r}.
\]
This is a quotient of $K/\LCS_{r}$ and a subgroup of $G/\LCS_{r}$, so it has nilpotency class at most $r-1$.
\end{defn}

When $K$ is the kernel of a split surjection of $G$, the subgroup $Q_{r}(\varphi)$ of $G/\LCS_{r}$ may alternatively be characterised as the kernel of the induced split surjection after applying $-/\LCS_{r}$:

\begin{lem}
\label{lem:Qr-alternative}
Let $G \twoheadrightarrow \Gamma$ be a split surjection with kernel $K$ and denote the inclusion by $\varphi \colon K \to G$. Then the subgroup $Q_{r}(\varphi)$ of $G/\LCS_{r}$ is equal to the kernel of the split surjection $G/\LCS_{r} \twoheadrightarrow \Gamma/\LCS_{r}$.
\end{lem}
\begin{proof}
This follows from the sequence of equalities
\begin{align*}
Q_{r}(\varphi) &= \mathrm{im}( K/\LCS_{r} \to G/\LCS_{r} ) &&\text{by definition;} \\
&= \mathrm{im} ( K \twoheadrightarrow K/\LCS_{r} \to G/\LCS_{r} ) &&\text{since $K \twoheadrightarrow K/\LCS_{r}$ is surjective;} \\
&= \mathrm{im} ( K \to G \twoheadrightarrow G/\LCS_{r} ) &&\text{as $(-) \to (-)/\LCS_{r}$ is natural, cf.\ \cite[Ex.~2.23]{PSI};} \\
&= \mathrm{ker} ( G/\LCS_{r} \twoheadrightarrow \Gamma/\LCS_{r} ) &&\text{by \cite[Lem.~2.24]{PSI} with $Q(-) = (-)/\LCS_{r}$.} \qedhere
\end{align*}
\end{proof}

\begin{lem}
\label{lem:nilpotency-class}
If $\varphi \colon K \to G$ is NCP, then $Q_{r}(\varphi)$ has nilpotency class exactly $r-1$.
\end{lem}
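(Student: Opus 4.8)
The plan is to establish the two facts that together pin down the nilpotency class: that $\LCS_r(Q_r(\varphi)) = 1$ and that $\LCS_{r-1}(Q_r(\varphi)) \neq 1$. The first is immediate from the observation already recorded in Definition~\ref{defn:Qr}: since $Q_r(\varphi)$ is a subgroup of $G/\LCS_r(G)$, and lower central series terms of a subgroup are contained in those of the ambient group (i.e.\ $\LCS_s(H) \subseteq \LCS_s(N)$ whenever $H \leq N$, by an easy induction on $s$), we get $\LCS_r(Q_r(\varphi)) \subseteq \LCS_r(G/\LCS_r(G)) = 1$. So $Q_r(\varphi)$ is nilpotent of class at most $r-1$, and it remains only to exclude class $\leq r-2$, that is, to show $\LCS_{r-1}(Q_r(\varphi)) \neq 1$. (For $r=1$ there is nothing to prove, as $Q_1(\varphi)$ is trivial, of class $0 = r-1$.)

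For the lower bound I would use the canonical surjection $\pi \colon K \twoheadrightarrow Q_r(\varphi)$ obtained by composing $K \twoheadrightarrow K/\LCS_r(K)$ with the surjection of $K/\LCS_r(K)$ onto its image in $G/\LCS_r(G)$. A surjective group homomorphism carries the $s$-th lower central series term onto the $s$-th lower central series term of the target (again a routine induction, using $[\pi(a),\pi(b)] = \pi([a,b])$), so $\LCS_{r-1}(Q_r(\varphi)) = \pi(\LCS_{r-1}(K))$. Unwinding the definition of $\pi$, this subgroup of $G/\LCS_r(G)$ is exactly the image of $\varphi(\LCS_{r-1}(K))$ under the quotient map $G \twoheadrightarrow G/\LCS_r(G)$, namely $\varphi(\LCS_{r-1}(K))\LCS_r(G)/\LCS_r(G)$. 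This is nontrivial if and only if $\varphi(\LCS_{r-1}(K)) \not\subseteq \LCS_r(G)$ — which is precisely the NCP hypothesis on $\varphi$ applied at the index $r-1 \geq 1$ (recall Definition~\ref{def:ncp} asserts $\varphi(\LCS_s(K)) \not\subseteq \LCS_{s+1}(G)$ for every $s \geq 1$). Hence $\LCS_{r-1}(Q_r(\varphi)) \neq 1$ and the nilpotency class is exactly $r-1$.

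There is no serious obstacle here; the argument is essentially a matter of unwinding the definitions. The two small points that need care are, first, keeping the index shift in Definition~\ref{def:ncp} straight — it is the NCP condition ``at $r-1$'', not ``at $r$'', that is used, which is also why the statement degenerates harmlessly at $r=1$ — and second, making explicit the two elementary functoriality properties of the lower central series being invoked (its behaviour under passage to subgroups and under surjections), rather than using them tacitly. Once these are in place the conclusion is direct, and in particular the NCP hypothesis is used exactly once, precisely to get $\LCS_{r-1}(Q_r(\varphi)) \neq 1$.
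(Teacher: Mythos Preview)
Your proof is correct and is essentially the same argument as the paper's, just phrased directly rather than via the contrapositive: both identify the image of $\LCS_{r-1}(K)$ in $G/\LCS_r(G)$ with $\LCS_{r-1}(Q_r(\varphi))$ and then invoke the NCP condition at index $r-1$ to see it is nontrivial. The paper's version is slightly terser but uses exactly the same ingredients.
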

\begin{proof}
We will prove the contrapositive. Suppose that $Q_{r}(\varphi)$ has nilpotency class at most $r-2$. Since $Q_{r}(\varphi)$ is the image of the composition $K \to K/\LCS_{r} \to G/\LCS_{r}$, this assumption implies that the kernel of the composed map $K \to G/\LCS_{r}$ contains $\LCS_{r-1}(K)$. But the map $K \to G/\LCS_{r}$ also factors as $K \to G \to G/\LCS_{r}$, so its kernel is $\varphi^{-1}(\LCS_{r}(G))$. Thus we have $\varphi(\LCS_{r-1}(K)) \subseteq \LCS_{r}(G)$ and so $\varphi$ is not NCP.
\end{proof}

Suppose that we have a commutative square of groups
\begin{equation}
\label{eq:square}
\begin{tikzcd}
K \ar[d] \ar[rr,"\varphi"] && G \ar[d] \\
K' \ar[rr,"{\varphi'}"] && G'
\end{tikzcd}
\end{equation}
where the vertical homomorphisms are surjective.

\begin{lem}
\label{lem:lifting-NCP}
If $\varphi'$ is NCP then so is $\varphi$.
\end{lem}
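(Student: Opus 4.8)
The plan is to prove the contrapositive: assuming $\varphi$ is not NCP, I will deduce that $\varphi'$ is not NCP. So suppose there is some $r\geq 1$ with $\varphi(\LCS_r(K)) \subseteq \LCS_{r+1}(G)$; the goal is to show $\varphi'(\LCS_r(K')) \subseteq \LCS_{r+1}(G')$.

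The only ingredient needed is the standard functoriality of the lower central series, in two forms. First, any group homomorphism $f\colon A\to B$ satisfies $f(\LCS_r(A))\subseteq \LCS_r(B)$, by induction on $r$ using $f([a,a'])=[f(a),f(a')]$. Second, and crucially, if $f$ is \emph{surjective} then $f(\LCS_r(A))=\LCS_r(B)$: indeed $\LCS_r(B)$ is generated by $r$-fold iterated commutators of elements of $B$, each of which lifts through $f$ to an $r$-fold iterated commutator in $A$. I would state both of these briefly (or cite them as well known) at the start of the argument.

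Now denote the two vertical surjections in \eqref{eq:square} by $p\colon K\twoheadrightarrow K'$ and $q\colon G\twoheadrightarrow G'$, so that $\varphi'\circ p = q\circ\varphi$. Applying the surjective case to $p$ gives $\LCS_r(K') = p(\LCS_r(K))$. Therefore
\[
\varphi'(\LCS_r(K')) \;=\; \varphi'\bigl(p(\LCS_r(K))\bigr) \;=\; q\bigl(\varphi(\LCS_r(K))\bigr) \;\subseteq\; q\bigl(\LCS_{r+1}(G)\bigr) \;\subseteq\; \LCS_{r+1}(G'),
\]
where the first inclusion uses the hypothesis $\varphi(\LCS_r(K))\subseteq\LCS_{r+1}(G)$ and the last uses the (non-surjective) functoriality statement applied to $q$. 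This is exactly the failure of NCP for $\varphi'$ at level $r$, completing the contrapositive.

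There is no genuine obstacle here: the statement is a one-line diagram chase, and the only point worth flagging is that one needs \emph{surjectivity} of the left-hand vertical map to get the equality $\LCS_r(K') = p(\LCS_r(K))$ (mere containment would run the wrong way), whereas for the right-hand map only the trivial containment $q(\LCS_{r+1}(G))\subseteq\LCS_{r+1}(G')$ is used.
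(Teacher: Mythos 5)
Your proof is correct and matches the paper's proof essentially line by line: both argue the contrapositive and run the same chain of equalities/inclusions through the commutative square, using surjectivity of the left vertical map to get $\LCS_r(K') = p(\LCS_r(K))$. Your closing remark that only the inclusion (not equality) $q(\LCS_{r+1}(G))\subseteq\LCS_{r+1}(G')$ is needed on the right is a small sharpening over the paper, which states it as an equality, but the substance is the same.
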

\begin{proof}
We will prove the contrapositive. Suppose that $\varphi$ is not NCP, so there exists $r\geq 1$ so that $\varphi(\LCS_{r}(K)) \subseteq \LCS_{r+1}(G)$. Denote each of the surjections $K \twoheadrightarrow K'$ and $G \twoheadrightarrow G'$ by $\pi$. Then
\[
\varphi'(\LCS_{r}(K')) = \varphi'(\pi(\LCS_{r}(K))) = \pi(\varphi(\LCS_{r}(K))) \subseteq \pi(\LCS_{r+1}(G)) = \LCS_{r+1}(G')
\]
so $\varphi'$ is not NCP.
\end{proof}

The following special case gives a useful criterion for a homomorphism to be NCP:

\begin{coro}
\label{coro:lifting-NCP}
Let $\varphi \colon K \to G$ be a homomorphism and $\pi \colon G \to G'$ a surjective homomorphism onto a group $G'$ whose lower central series does not stop, such that $\pi \circ \varphi$ is also surjective. Then $\varphi$ is NCP.
\end{coro}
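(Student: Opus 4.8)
The plan is to deduce the corollary immediately from Lemma~\ref{lem:lifting-NCP} by producing the appropriate commutative square. First I would assemble the square
\[
\begin{tikzcd}
K \ar[d, "{\pi\circ\varphi}"'] \ar[rr, "\varphi"] && G \ar[d, "\pi"] \\
G' \ar[rr, "{\mathrm{id}_{G'}}"'] && G'
\end{tikzcd}
\]
which commutes on the nose, since $\mathrm{id}_{G'} \circ (\pi\circ\varphi) = \pi \circ \varphi$. By hypothesis the left vertical map $\pi \circ \varphi$ is surjective and the right vertical map $\pi$ is surjective, so this square satisfies the standing assumption of Lemma~\ref{lem:lifting-NCP} that the vertical homomorphisms be surjective.

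Next I would observe that the bottom horizontal map is the identity $\mathrm{id}_{G'}$, and invoke the second part of Remark~\ref{rmk:NCP-and-non-stopping-LCS}: the identity homomorphism of a group is NCP if and only if that group has non-stopping lower central series. Since $G'$ is assumed to have non-stopping lower central series, $\mathrm{id}_{G'}$ is NCP. Applying Lemma~\ref{lem:lifting-NCP} to the above square with $\varphi' = \mathrm{id}_{G'}$ then yields that the top map $\varphi$ is NCP, which is the assertion.

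There is essentially no obstacle here; the only point requiring any thought is the choice of square, namely to place $G'$ (the group about which we have information) in both bottom corners and to route the composite $\pi\circ\varphi$ down the left edge, after which the two required surjectivity hypotheses are exactly what was assumed. Alternatively, one could argue directly by unwinding the proof of Lemma~\ref{lem:lifting-NCP}: if $\varphi(\LCS_r(K)) \subseteq \LCS_{r+1}(G)$ for some $r\geq 1$, then applying $\pi$ gives $(\pi\circ\varphi)(\LCS_r(K)) \subseteq \LCS_{r+1}(G')$, and surjectivity of $\pi\circ\varphi$ forces $\LCS_r(G') = \LCS_{r+1}(G')$, contradicting the non-stopping hypothesis on $G'$; but packaging this through Lemma~\ref{lem:lifting-NCP} is cleaner.
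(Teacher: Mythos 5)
Your proof is correct and is essentially identical to the paper's: the paper also sets $K' = G'$, $\varphi' = \mathrm{id}_{G'}$, takes $\pi\circ\varphi$ as the left vertical map, and invokes Remark~\ref{rmk:NCP-and-non-stopping-LCS} together with Lemma~\ref{lem:lifting-NCP}.
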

\begin{proof}
We may set $K' = G'$ and $\varphi' = \id$ in \eqref{eq:square}. Remark~\ref{rmk:NCP-and-non-stopping-LCS} implies that $\id \colon G' \to G'$ is NCP.
\end{proof}

We will also need an equivariant version of the NCP property. Fix a group $\Gamma$ and consider the category of groups equipped with left $\Gamma$-actions and $\Gamma$-equivariant group homomorphisms.

First, we define the \emph{span} of a normal subgroup in this category. Let $G$ be a group equipped with a left $\Gamma$-action and let $N$ be a normal subgroup of $G$ that is $\Gamma$-invariant (for example this holds if $\Gamma$ acts on $G$ by inner automorphisms or if $N$ is a characteristic subgroup). There is therefore a well-defined induced left $\Gamma$-action on the quotient group $G/N$.

\begin{defn}
\label{defn:Gamma-span}
The \emph{$\Gamma$-span} $\langle N \rangle_{\Gamma} \subseteq G$ is defined to be the kernel of the quotient of $G$ onto the coinvariants $(G/N)_{\Gamma}$ of the induced $\Gamma$-action on $G/N$, that is, the quotient $(G/N)/\langle\bar{g}(\gamma\cdot\bar{g})^{-1}\mid\bar{g}\in G/N,\gamma\in\Gamma\rangle$ where $\langle\bar{g}(\gamma\cdot\bar{g})^{-1}\mid\bar{g}\in G/N,\gamma\in\Gamma\rangle$ denotes the smallest normal subgroup of $G/N$ containing all elements of the form $\bar{g}(\gamma\cdot\bar{g})^{-1}$ with $\bar{g}\in G/N$ and $\gamma \in \Gamma$.
\end{defn}

\begin{wng}
This is not the same as the normal subgroup of $G$ generated by the set of elements $\{\gamma \cdot n \mid \gamma \in \Gamma, n\in N \}$. For example, if $\Gamma = G$ acting on itself by conjugation and $N$ is the trivial subgroup, then the normal subgroup generated by $\{\gamma \cdot n \mid \gamma \in \Gamma, n\in N \}$ is trivial whereas $\langle N \rangle_{\Gamma}$ is the commutator subgroup of $G$.
\end{wng}

\begin{defn}
\label{def:encp}
A $\Gamma$-equivariant group homomorphism $\varphi \colon K \to G$ is called \emph{equivariantly nilpotency class preserving} (eNCP) if, for each $r\geq 1$, we have $\varphi(\LCS_{r}(K)) \not\subseteq \langle \LCS_{r+1}(G) \rangle_{\Gamma}$.
\end{defn}

\begin{rmk}
\label{rmk:trivial-Gamma-action}
This recovers Definition~\ref{def:ncp} when $\Gamma$ is the trivial group, or more generally when $\Gamma$ is any group acting trivially on $G$, since in this case $\langle N \rangle_{\Gamma} = N$ for all characteristic subgroups $N \subseteq G$.
\end{rmk}

The notion of eNCP has a lifting property analogous to Lemma~\ref{lem:lifting-NCP}. Suppose that we have a commutative square in the category of groups equipped with left $\Gamma$-actions:
\begin{equation}
\label{eq:esquare}
\begin{tikzcd}
K \ar[d] \ar[rr,"\varphi"] && G \ar[d] \\
K' \ar[rr,"{\varphi'}"] && G'
\end{tikzcd}
\end{equation}
where the vertical homomorphisms are surjective. We first need a technical lemma:

\begin{lem}
\label{lem:Gamma-span-quotients}
Let $\pi \colon G \to G'$ be a $\Gamma$-equivariant homomorphism and let $N \subseteq G$ be a characteristic subgroup such that $\pi(N) \subseteq G'$ is also characteristic. Then $\pi(\langle N \rangle_{\Gamma}) \subseteq \langle \pi(N) \rangle_{\Gamma}$.
\end{lem}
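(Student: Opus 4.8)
The plan is to unwind Definition~\ref{defn:Gamma-span} and reduce the statement to the functoriality of the coinvariants construction $H \mapsto H_\Gamma$ together with a one-line diagram chase. First I would record that the hypotheses make everything well-defined: since $N$ is characteristic in $G$ it is in particular normal and $\Gamma$-invariant, so the induced $\Gamma$-action on $G/N$ exists and $\langle N\rangle_\Gamma = \ker\bigl(G \twoheadrightarrow (G/N)_\Gamma\bigr)$ makes sense; likewise, since $\pi(N)$ is characteristic in $G'$, the induced $\Gamma$-action on $G'/\pi(N)$ exists and $\langle \pi(N)\rangle_\Gamma = \ker\bigl(G' \twoheadrightarrow (G'/\pi(N))_\Gamma\bigr)$. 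This bookkeeping is the only place the characteristicity hypotheses get used.

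Next I would observe that $\pi$ descends to a $\Gamma$-equivariant homomorphism $\overline{\pi}\colon G/N \to G'/\pi(N)$: it is well-defined because $\pi(N)$ is precisely the image of $N$, and it is $\Gamma$-equivariant because $\pi$ is and the actions on the quotients are the induced ones. The coinvariants functor --- being left adjoint to the trivial-action functor from groups to groups equipped with $\Gamma$-action --- sends $\overline{\pi}$ to a homomorphism $(G/N)_\Gamma \to (G'/\pi(N))_\Gamma$, and the quotient maps $H \twoheadrightarrow H_\Gamma$ are natural in $H$; hence the square
\[
\begin{tikzcd}
G \ar[r,"\pi"] \ar[d] & G' \ar[d] \\
(G/N)_\Gamma \ar[r] & (G'/\pi(N))_\Gamma
\end{tikzcd}
\]
in which the vertical maps are the canonical quotients defining the two $\Gamma$-spans, commutes. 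The conclusion is then immediate: if $x \in \langle N\rangle_\Gamma$ then $x$ maps to the identity in $(G/N)_\Gamma$, so by commutativity $\pi(x)$ maps to the identity in $(G'/\pi(N))_\Gamma$, i.e.\ $\pi(x) \in \langle \pi(N)\rangle_\Gamma$.

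I do not expect a genuine obstacle here; the argument is essentially formal. If one prefers to avoid invoking coinvariants abstractly, there is an equally short variant: one checks directly that $\langle N\rangle_\Gamma$ equals the normal closure in $G$ of the set $N \cup \{\gamma(g)g^{-1} : \gamma\in\Gamma,\ g\in G\}$ (using that the $\Gamma$-action on $G/N$ is induced from that on $G$), notes that any homomorphism carries a normal closure into the normal closure of the image of its generating set, and uses $\Gamma$-equivariance of $\pi$ in the form $\pi(\gamma(g)g^{-1}) = \gamma(\pi(g))\pi(g)^{-1}$; this places $\pi(\langle N\rangle_\Gamma)$ inside the normal closure in $G'$ of $\pi(N) \cup \{\gamma(g')(g')^{-1} : \gamma\in\Gamma,\ g'\in G'\}$, which is $\langle \pi(N)\rangle_\Gamma$. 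Either way, the only point that needs care is tracking which subgroups are normal and $\Gamma$-invariant, so that all the quotients and induced actions in sight are legitimate.
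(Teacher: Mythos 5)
Your argument is correct and is essentially the paper's own: the paper likewise records the commutative diagram
\[
\begin{tikzcd}
G \ar[r,two heads] \ar[d,"\pi"'] & G/N \ar[r,two heads] \ar[d] & (G/N)_\Gamma \ar[d] \\
G' \ar[r,two heads] & G'/\pi(N) \ar[r,two heads] & (G'/\pi(N))_\Gamma
\end{tikzcd}
\]
and finishes by the same one-line chase using that $\langle N\rangle_\Gamma$ and $\langle\pi(N)\rangle_\Gamma$ are the kernels of the horizontal compositions. Your concrete description of $\langle N\rangle_\Gamma$ as a normal closure is a correct but optional supplement; the main argument is identical to the paper's.
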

\begin{proof}
This follows from an elementary diagram chase in the commutative square
\begin{equation*}
\begin{tikzcd}
G \ar[r,two heads] \ar[d] & G/N \ar[d] \ar[r,two heads] & (G/N)_{\Gamma} \ar[d] \\
G' \ar[r,two heads] & G'/\pi(N) \ar[r,two heads] & (G'/\pi(N))_{\Gamma},
\end{tikzcd}
\end{equation*}
in which $\langle N \rangle_{\Gamma}$ is the kernel of the composition across the top and $\langle \pi(N) \rangle_{\Gamma}$ is the kernel of the composition across the bottom.
\end{proof}

\begin{lem}
\label{lem:lifting-eNCP}
In diagram \eqref{eq:esquare}, if $\varphi'$ is eNCP then so is $\varphi$.
\end{lem}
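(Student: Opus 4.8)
The plan is to prove the contrapositive, following the proof of Lemma~\ref{lem:lifting-NCP} but with every ordinary lower-central-series term replaced by its $\Gamma$-span and with Lemma~\ref{lem:Gamma-span-quotients} inserted at the crucial step.

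First I would suppose that $\varphi$ is not eNCP, so that there is some $r\geq 1$ with $\varphi(\LCS_r(K)) \subseteq \langle \LCS_{r+1}(G)\rangle_\Gamma$. Write $\pi$ for each of the two $\Gamma$-equivariant surjections $K \twoheadrightarrow K'$ and $G \twoheadrightarrow G'$ appearing in \eqref{eq:esquare}, and recall the standard fact that a surjection of groups carries each term of the lower central series onto the corresponding term, so $\pi(\LCS_s(K)) = \LCS_s(K')$ and $\pi(\LCS_s(G)) = \LCS_s(G')$ for all $s$. Using commutativity of \eqref{eq:esquare} together with $\Gamma$-equivariance of all four maps, one computes
\[
\varphi'(\LCS_r(K')) = \varphi'(\pi(\LCS_r(K))) = \pi(\varphi(\LCS_r(K))) \subseteq \pi\bigl(\langle \LCS_{r+1}(G)\rangle_\Gamma\bigr).
\]

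The key step is to bound the right-hand side, and here I would apply Lemma~\ref{lem:Gamma-span-quotients} with $N = \LCS_{r+1}(G)$. This $N$ is a characteristic subgroup of $G$, and since $\pi$ is surjective we have $\pi(N) = \LCS_{r+1}(G')$, which is characteristic in $G'$; the hypotheses of Lemma~\ref{lem:Gamma-span-quotients} are therefore met and we conclude $\pi(\langle \LCS_{r+1}(G)\rangle_\Gamma) \subseteq \langle \LCS_{r+1}(G')\rangle_\Gamma$. Combining this with the display above yields $\varphi'(\LCS_r(K')) \subseteq \langle \LCS_{r+1}(G')\rangle_\Gamma$, i.e.\ $\varphi'$ is not eNCP, which completes the contrapositive.

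I do not anticipate a serious obstacle: once Lemma~\ref{lem:Gamma-span-quotients} is available the argument is essentially a direct translation of Lemma~\ref{lem:lifting-NCP}. The only points that need a word of care are that all subgroups in sight are $\Gamma$-invariant, so that the various $\Gamma$-spans are defined in the first place (this holds because lower-central-series terms are characteristic, hence preserved by any automorphism and in particular by the $\Gamma$-action), and that $\pi$ genuinely sends $\LCS_{r+1}(G)$ \emph{onto} $\LCS_{r+1}(G')$ rather than merely into it — this is precisely where surjectivity of the vertical maps in \eqref{eq:esquare} is used, and it is what makes $\pi(N)$ characteristic so that Lemma~\ref{lem:Gamma-span-quotients} applies.
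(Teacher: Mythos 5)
Your proof is correct and follows essentially the same route as the paper: prove the contrapositive, push $\LCS_r(K)$ forward along the surjections, and invoke Lemma~\ref{lem:Gamma-span-quotients} to pass the $\Gamma$-span through $\pi$. The extra remarks you add (that lower-central-series terms are characteristic, hence the $\Gamma$-spans are defined, and that $\pi(\LCS_{r+1}(G)) = \LCS_{r+1}(G')$ by surjectivity) are exactly the implicit points the paper relies on.
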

\begin{proof}
As usual, we prove the contrapositive. Suppose that $\varphi$ is not eNCP, so there is some $r\geq 1$ so that $\varphi(\LCS_{r}(K)) \subseteq \langle \LCS_{r+1}(G) \rangle_{\Gamma}$. We thus have (denoting both of the the vertical homomorphisms of \eqref{eq:esquare} by $\pi$):
\begin{align*}
\varphi'(\LCS_{r}(K')) = \varphi'(\pi(\LCS_{r}(K))) &= \pi(\varphi(\LCS_{r}(K))) \\
&\subseteq \pi(\langle \LCS_{r+1}(G) \rangle_{\Gamma}) \\
&\subseteq \langle \pi(\LCS_{r+1}(G)) \rangle_{\Gamma} = \langle \LCS_{r+1}(G') \rangle_{\Gamma}
\end{align*}
where the inclusion on the bottom row follows from Lemma~\ref{lem:Gamma-span-quotients}. Thus $\varphi'$ is also not eNCP.
\end{proof}

\begin{coro}
\label{coro:lifting-eNCP}
Let $\varphi \colon K \to G$ be a $\Gamma$-equivariant homomorphism and $\pi \colon G \to G'$ a surjective, $\Gamma$-invariant homomorphism onto a group $G'$ whose lower central series does not stop, such that $\pi \circ \varphi$ is also surjective. Then $\varphi$ is eNCP.
\end{coro}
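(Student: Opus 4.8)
The plan is to deduce this from Lemma~\ref{lem:lifting-eNCP}, mimicking the way Corollary~\ref{coro:lifting-NCP} follows from Lemma~\ref{lem:lifting-NCP}. The first step is to unwind the hypothesis that $\pi$ is \emph{$\Gamma$-invariant}: I read this as the statement that $\Gamma$ acts trivially on $G'$, so that $\pi\colon G\to G'$ becomes a morphism in the category of groups-with-$\Gamma$-action once $G'$ is given the trivial action, and moreover $\pi(\gamma\cdot g)=\pi(g)$ for all $\gamma\in\Gamma$, $g\in G$. With $G'$ so equipped, I would instantiate the square \eqref{eq:esquare} by taking $K'=G'$ with the trivial $\Gamma$-action, the right-hand vertical map to be $\pi$, the left-hand vertical map to be $\pi\circ\varphi$, the top map to be $\varphi$, and the bottom map $\varphi'$ to be $\mathrm{id}_{G'}$.

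The second step is to check that this really is a square in the relevant category with surjective vertical maps. Commutativity is immediate, and both $\pi$ and $\pi\circ\varphi$ are surjective by hypothesis. The map $\mathrm{id}_{G'}$ is trivially $\Gamma$-equivariant (trivial action on both sides), and $\pi\circ\varphi$ is $\Gamma$-equivariant because $\varphi$ is $\Gamma$-equivariant and $\pi$ is $\Gamma$-invariant: $(\pi\circ\varphi)(\gamma\cdot k)=\pi(\gamma\cdot\varphi(k))=\pi(\varphi(k))$ for all $\gamma\in\Gamma$, $k\in K$.

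The third and final step is to observe that $\varphi'=\mathrm{id}_{G'}$ is eNCP; Lemma~\ref{lem:lifting-eNCP} then immediately gives that $\varphi$ is eNCP. Since $\Gamma$ acts trivially on $G'$, Remark~\ref{rmk:trivial-Gamma-action} shows that $\langle N\rangle_\Gamma=N$ for every characteristic subgroup $N\subseteq G'$; in particular $\langle\LCS_{r+1}(G')\rangle_\Gamma=\LCS_{r+1}(G')$ for all $r\geq 1$, so the eNCP condition for $\mathrm{id}_{G'}$ reduces to the plain NCP condition, which by Remark~\ref{rmk:NCP-and-non-stopping-LCS} is equivalent to the lower central series of $G'$ not stopping --- exactly the hypothesis.

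The whole argument is formal, so I do not anticipate any genuine obstacle; the one point deserving care is the interpretation of ``$\Gamma$-invariant'' as forcing the trivial action on $G'$, which is precisely what makes the $\Gamma$-span of each term of the lower central series of $G'$ collapse to that term itself (Remark~\ref{rmk:trivial-Gamma-action}), and thereby lets the non-equivariant statement of Remark~\ref{rmk:NCP-and-non-stopping-LCS} close the argument.
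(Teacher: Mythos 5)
Your proof is correct and follows exactly the same route as the paper's: instantiate the square \eqref{eq:esquare} with $K'=G'$ carrying the trivial $\Gamma$-action and $\varphi'=\mathrm{id}_{G'}$, then invoke Lemma~\ref{lem:lifting-eNCP}, Remark~\ref{rmk:trivial-Gamma-action} and Remark~\ref{rmk:NCP-and-non-stopping-LCS}. You have merely spelled out the commutativity, surjectivity, and equivariance checks that the paper leaves implicit.
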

\begin{proof}
Consider the commutative square \eqref{eq:esquare} with $K' = G'$ equipped with the trivial $\Gamma$-action (we may do this since $\pi \colon G \to G'$ is assumed to be $\Gamma$-\emph{invariant}) and $\varphi' = \id$. By Lemma~\ref{lem:lifting-eNCP} it suffices to prove that $\id \colon G' \to G'$ is eNCP. By Remark~\ref{rmk:trivial-Gamma-action} this is the same as proving that $\id \colon G' \to G'$ is NCP, ignoring the (trivial) $\Gamma$-action. By Remark~\ref{rmk:NCP-and-non-stopping-LCS} this is the same as proving that the lower central series of $G'$ does not stop, which is part of our hypotheses.
\end{proof}

\begin{rmk}
Notice that the only additional hypothesis in Corollary~\ref{coro:lifting-eNCP} compared with Corollary~\ref{coro:lifting-NCP} is that the quotient $\pi \colon G \to G'$ is $\Gamma$-invariant.
\end{rmk}

We will be particularly interested in the setting of a split short exact sequence
\begin{equation}
\label{eq:ses}
\begin{tikzcd}
1 \ar[r] & K \ar[r] & G \ar[r] & \Gamma \ar[l,bend right=30,dashed] \ar[r] & 1.
\end{tikzcd}
\end{equation}
The quotient group $\Gamma$ acts by left-conjugation on $G$ and on $K$ via the given splitting (i.e.~its left action is $\gamma \cdot g = s(\gamma) g s(\gamma)^{-1}$ if $s$ denotes the splitting), so the inclusion $\varphi \colon K \to G$ becomes a $\Gamma$-equivariant group homomorphism.

\begin{rmk}
\label{rmk:lifting-eNCP}
In this setting, Corollary~\ref{coro:lifting-eNCP} implies that a sufficient criterior for $\varphi$ to be eNCP is the existence of a surjection $G \to G'$ onto a group $G'$ whose lower central series does not stop, such that the restriction $K \hookrightarrow G \to G'$ is also surjective and the composition $\Gamma \dashrightarrow G \to G'$ has image contained in the centre of $G'$. In our examples, we will typically have the stronger property that the composition $\Gamma \dashrightarrow G \to G'$ is the trivial map, so we will be in the following situation:
\begin{equation}
\label{eq:ses-with-quotient}
\begin{tikzcd}
1 \ar[r] & K \ar[r] \ar[dr,two heads] & G \ar[r] \ar[d,two heads] & \Gamma \ar[l,bend right=30,dashed] \ar[r] \ar[dl,dashed,"0"] & 1 \\
&& G'. &&
\end{tikzcd}
\end{equation}
\end{rmk}

Let $\varphi \colon K \to G$ be a $\Gamma$-equivariant homomorphism. Since the terms of the lower central series are characteristic, there is a well-defined induced left $\Gamma$-action on $K/\LCS_{r}$ and on $G/\LCS_{r}$, and the induced homomorphism $K/\LCS_{r} \to G/\LCS_{r}$ is $\Gamma$-equivariant. The image of this homomorphism, which by Definition~\ref{defn:Qr} is $Q_{r}(\varphi)$, thus also inherits a well-defined induced left $\Gamma$-action.

\begin{defn}
\label{defn:Qru}
In the above setting, we define:
\[
Q_{r}^{\mathrm{u}}(\varphi) = Q_{r}(\varphi)_{\Gamma} ,
\]
in other words $Q_{r}^{\mathrm{u}}(\varphi)$ is the coinvariants of the induced left $\Gamma$-action on $Q_{r}(\varphi)$.
\end{defn}

The superscript ${}^{\mathrm{u}}$ stands for ``untwisted'', since the purpose of taking this further quotient of $K$ is to ensure that the pro-nilpotent representations that we construct in \S\ref{s:general-recipe} commute with the module structure over the group ring of this quotient. This is in contrast to commuting only up to a ``twist'', which would give a weaker notion of representation than the usual one. See also Remark~\ref{rmk:twisted-version-of-the-construction}.

As a first observation, note that $Q_{r}^{\mathrm{u}}(\varphi)$ has nilpotency class at most $r-1$, since it is a quotient of $Q_{r}(\varphi)$, which has nilpotency class at most $r-1$. The following lemma, which is the equivariant analogue of Lemma~\ref{lem:nilpotency-class}, is the key technical result of this section.

\begin{lem}
\label{lem:e-nilpotency-class}
In the setting of a split short exact sequence \eqref{eq:ses}, if $\varphi \colon K \to G$ is eNCP, then the group $Q_{r}^{\mathrm{u}}(\varphi)$ has nilpotency class exactly $r-1$.
\end{lem}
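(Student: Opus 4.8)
The plan is to argue by contrapositive, following the template of the proof of Lemma~\ref{lem:nilpotency-class} but inserting a passage to $\Gamma$-coinvariants at the appropriate point. Since we already know that $Q_r^{\mathrm{u}}(\varphi)$ has nilpotency class at most $r-1$, it will suffice to show: if $Q_r^{\mathrm{u}}(\varphi) = Q_r(\varphi)_\Gamma$ has nilpotency class at most $r-2$, then $\varphi$ fails to be eNCP, and the index witnessing this failure will be $s = r-1$ (so we may assume $r\geq 2$, the case $r=1$ being vacuous).

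First I would write down the composite surjection
\[
\psi \colon K \relbar\joinrel\twoheadrightarrow K/\LCS_r(K) \relbar\joinrel\twoheadrightarrow Q_r(\varphi) \relbar\joinrel\twoheadrightarrow Q_r(\varphi)_\Gamma = Q_r^{\mathrm{u}}(\varphi),
\]
where the middle map is the homomorphism induced by $\varphi$ on the quotients by $\LCS_r$, corestricted onto its image, and the last map is the quotient onto $\Gamma$-coinvariants. Because $\psi$ is surjective, it carries $\LCS_{r-1}(K)$ onto $\LCS_{r-1}(Q_r^{\mathrm{u}}(\varphi))$, which is trivial under our assumption; hence $\LCS_{r-1}(K) \subseteq \ker(\psi)$. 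The remaining task is to show $\ker(\psi) \subseteq \varphi^{-1}\bigl(\langle \LCS_r(G)\rangle_\Gamma\bigr)$, after which $\varphi(\LCS_{r-1}(K)) \subseteq \langle \LCS_r(G)\rangle_\Gamma$ and we are done.

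For that I would bring in the parallel composite
\[
\psi' \colon K \xrightarrow{\ \varphi\ } G \relbar\joinrel\twoheadrightarrow G/\LCS_r(G) \relbar\joinrel\twoheadrightarrow \bigl(G/\LCS_r(G)\bigr)_\Gamma ,
\]
whose kernel is exactly $\varphi^{-1}\bigl(\langle \LCS_r(G)\rangle_\Gamma\bigr)$ by Definition~\ref{defn:Gamma-span}. The inclusion $Q_r(\varphi) \hookrightarrow G/\LCS_r(G)$ is a morphism of $\Gamma$-groups — its source is $\Gamma$-invariant as the image of a $\Gamma$-equivariant map, and the terms $\LCS_r$ are characteristic hence $\Gamma$-invariant — so applying the coinvariants functor $(-)_\Gamma$ and invoking naturality of the quotient-to-coinvariants maps gives a factorisation $\psi' = \bigl(Q_r(\varphi)_\Gamma \to (G/\LCS_r(G))_\Gamma\bigr) \circ \psi$. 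In particular $\ker(\psi) \subseteq \ker(\psi')$, which is the containment wanted, completing the contrapositive.

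The one genuinely delicate point — the step I would flag as the main obstacle to a naïve proof — is that $\Gamma$-coinvariants of a subgroup do \emph{not} in general embed into the $\Gamma$-coinvariants of the ambient group, so one must resist the temptation to regard $Q_r(\varphi)_\Gamma$ as a subgroup of $(G/\LCS_r(G))_\Gamma$. The argument above is arranged to sidestep this entirely: it uses only the factorisation $\psi' = (\text{natural map}) \circ \psi$, which yields the single kernel containment $\ker(\psi) \subseteq \ker(\psi')$ that the proof consumes, and never the reverse containment or any injectivity statement. Everything else is formal — functoriality of $(-)_\Gamma$ and the standard fact that surjections carry each $\LCS_s$ onto the $\LCS_s$ of the image.
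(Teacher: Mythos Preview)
Your proof is correct and follows essentially the same approach as the paper: both argue by contrapositive, observe that the surjection $K \twoheadrightarrow Q_r^{\mathrm{u}}(\varphi)$ kills $\LCS_{r-1}(K)$ under the nilpotency-class assumption, and then use the factorisation of $K \to (G/\LCS_r)_\Gamma$ through $Q_r^{\mathrm{u}}(\varphi)$ (which the paper packages as commutativity of a diagram) to conclude $\varphi(\LCS_{r-1}(K)) \subseteq \langle \LCS_r(G) \rangle_\Gamma$. Your explicit flagging of the non-injectivity of $Q_r(\varphi)_\Gamma \to (G/\LCS_r)_\Gamma$ is a nice touch that the paper leaves implicit in its diagram.
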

\begin{proof}
We prove the contrapositive, so we assume that the nilpotency class of $Q_{r}^{\mathrm{u}}(\varphi)$ is at most $r-2$ and will show that $\varphi$ is not eNCP. Consider the following commutative diagram of groups with left $\Gamma$-actions:
\begin{equation}
\label{eq:diagram-of-quotients}
\begin{tikzcd}
K \ar[d,two heads] \ar[rr,"\varphi"] && G \ar[d,two heads] \\
K/\LCS_{r} \ar[r,two heads] & Q_{r}(\varphi) \ar[r,hook] \ar[d,two heads] & G/\LCS_{r} \ar[d,two heads] \\
& Q_{r}^{\mathrm{u}}(\varphi) \ar[r] & (G/\LCS_{r})_{\Gamma}
\end{tikzcd}
\end{equation}
Since $Q_{r}^{\mathrm{u}}(\varphi)$ is a quotient of $K$ and has nilpotency class at most $r-2$, the kernel of the quotient must contain $\LCS_{r-1}(K)$:
\[
\LCS_{r-1}(K) \subseteq \mathrm{ker}(K \twoheadrightarrow Q_{r}^{\mathrm{u}}(\varphi)).
\]
By commutativity of \eqref{eq:diagram-of-quotients}, this implies that
\[
\varphi(\LCS_{r-1}(K)) \subseteq \mathrm{ker}(G \twoheadrightarrow (G/\LCS_{r})_{\Gamma}).
\]
But the right-hand side is the definition of $\langle \LCS_{r}(G) \rangle_{\Gamma}$, so we have shown that $\varphi$ is not eNCP.
\end{proof}

\paragraph*{Outline of the remaining sections.}
In \S\ref{s:general-recipe} we consider the quotients $Q_{r}^{\mathrm{u}}(\varphi)$ of the group $K$ in a split short exact sequence \eqref{eq:ses} and apply Lemma~\ref{lem:e-nilpotency-class} to see that they have nilpotency class exactly $r$, assuming the eNCP property. This is the key technical ingredient in our general construction of pro-nilpotent representations. In \S\ref{s:Bn}--\S\ref{s:LBn}, we then consider specific examples of such split short exact sequences and prove the eNCP property in each case using Corollary~\ref{coro:lifting-eNCP} in the setting of Remark~\ref{rmk:lifting-eNCP}, that is, we produce a quotient group $G'$, fitting into diagram \eqref{eq:ses-with-quotient}, whose lower central series does not stop.

\section{The general recipe}\label{s:general-recipe}

Let $\Gamma$ be a group. We first describe a general recipe for constructing homological representations of $\Gamma$, before explaining how to augment this to produce pro-nilpotent representations. The idea of the general recipe is the same as that of \cite[\S 2]{PSI}, although the details of \cite[\S 2]{PSI} are significantly more involved, as the goal there is to construct representations of a \emph{family} of groups (encoded in a category), rather than a single group.

\paragraph*{Homological representations.}
We suppose that we are given three inputs:
\begin{compactenum}[(1)]
\item\label{input:ses} A split short exact sequence:
\begin{equation}
\label{eq:input-ses}
\begin{tikzcd}
1 \ar[r] & K \ar[r,"\varphi"] & G \ar[r] & \Gamma \ar[l,bend right=30,dashed] \ar[r] & 1.
\end{tikzcd}
\end{equation}
\item\label{input:fibration} A diagram of based, path-connected spaces:
\begin{equation}
\label{eq:input-fibration-sequence}
\begin{tikzcd}
X \ar[r,"i"] & Y \ar[r,"f"] & Z, \ar[l,bend right=40,dashed]
\end{tikzcd}
\end{equation}
where $f$ is a Serre fibration and $i$ is the inclusion of the fibre, that induces \eqref{eq:input-ses} on $\pi_{1}$.
\item\label{input:quotient} A quotient $K \twoheadrightarrow Q$ that is invariant under the left $\Gamma$-action on $K$ induced by \eqref{eq:input-ses}.
\end{compactenum}

\begin{rmk}
There is a canonical choice of the input \eqref{input:fibration}, given the input \eqref{input:ses}. Taking classifying spaces, the quotient $G \twoheadrightarrow \Gamma$ induces a map $BG \to B\Gamma$, which we may assume, up to based homotopy equivalence, is a Serre fibration. Its fibre is then a model for the classifying space $BK$. Taking classifying spaces is functorial, so the section of \eqref{eq:input-ses} induces a section for \eqref{eq:input-fibration-sequence}.

In many of our examples, the input \eqref{input:fibration} will indeed be of this canonical form, i.e.~the spaces $X$, $Y$ and $Z$ that we consider will be aspherical (have vanishing higher homotopy groups). However, not in all cases: in \S\ref{s:LBn} we apply our general construction to loop braid groups using certain configuration spaces of points and loops in the $3$-disc for the spaces $X$, $Y$ and $Z$, which are not aspherical.
\end{rmk}

\begin{lem}
\label{lem:general-recipe}
The three inputs above induce a well-defined representation of $\Gamma$ over the ring $\bZ[Q]$ for each homological degree $i\geq 0$.
\end{lem}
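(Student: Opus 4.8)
The plan is to take as underlying module the homology of the regular $Q$-cover of the fibre $X$, and to build the $\Gamma$-action by lifting the monodromy of the Serre fibration $f$ to this cover, using the section to turn it into an honest (rather than merely projective) action.

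First I would set $N := \ker(K \twoheadrightarrow Q)$. Being a kernel it is normal in $K$, and it is $\Gamma$-invariant by input~\ref{input:quotient}; since~\eqref{eq:input-ses} is split, $G$ is generated by $K$ together with the image of the splitting $s_* \colon \Gamma \to G$, and both normalise $N$, so $N$ is in fact normal in $G$. Let $p \colon \widehat X \to X$ be the connected regular covering space associated with $N \trianglelefteq \pi_1(X,x_0) = K$, where $x_0 \in X$ is the basepoint (which we may take to be $s(z_0) \in f^{-1}(z_0) = X$), and fix a lift $\widehat x_0$ of $x_0$; the deck group of $p$ is $K/N = Q$. Then $H_i(\widehat X;\bZ)$ is a module over $\bZ[Q]$; equivalently $H_i(\widehat X;\bZ) \cong H_i(X;\bZ[Q])$, where $\bZ[Q]$ denotes the local system on $X$ on which $\pi_1(X) = K$ acts through $K \twoheadrightarrow Q$ by left translation. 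This $\bZ[Q]$-module $V_i := H_i(X;\bZ[Q])$ is the space of the representation.

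Next, to construct the $\Gamma$-action: for $\gamma \in \pi_1(Z,z_0) = \Gamma$, fibre transport of $f$ along a loop representing $\gamma$ — performed through the section $s$, so that the basepoint $x_0 = s(z_0)$ is fixed throughout — produces a self-homotopy-equivalence $\phi_\gamma \colon (X,x_0) \to (X,x_0)$, well-defined up to based homotopy and depending only on $\gamma$, which on $\pi_1(X) = K$ induces conjugation by $s_*(\gamma)$ (the $\Gamma$-action on $K$ coming from the splitting of~\eqref{eq:input-ses}). In particular $(\phi_\gamma)_*$ preserves $N$, so $\phi_\gamma$ admits a unique based lift $\widehat\phi_\gamma \colon (\widehat X,\widehat x_0) \to (\widehat X,\widehat x_0)$. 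Since based lifts are unique and take based-homotopic maps to based-homotopic maps, and since $\widehat\phi_\gamma \circ \widehat\phi_\delta$ is a based lift of $\phi_\gamma\circ\phi_\delta \simeq \phi_{\gamma\delta}$, the assignment $\gamma \mapsto (\widehat\phi_\gamma)_*$ is a well-defined group homomorphism $\Gamma \to \mathrm{Aut}_\bZ(V_i)$, independent of the auxiliary choices. (Alternatively one may pass to the covering $\widehat Y \to Y$ associated with the subgroup $N\cdot s_*(\Gamma) \leq G$: a short double-coset computation shows that it is a Serre fibration over $Z$ with a section lifting $s$ and with connected fibre equal to $\widehat X$, and one applies the standard monodromy action of $\pi_1(Z)$ on the homology of the fibre of a fibration.)

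Finally I would check compatibility with the $\bZ[Q]$-structure: a computation at the basepoint shows that, for each deck transformation $q \in Q$, one has $\widehat\phi_\gamma \circ q = (\gamma\cdot q)\circ\widehat\phi_\gamma$, where $\gamma\cdot(-)$ is the automorphism of $Q = K/N$ induced by the $\Gamma$-action on $K$ (well-defined since $N$ is $\Gamma$-invariant). Hence $(\widehat\phi_\gamma)_*$ is $\bZ[Q]$-semilinear with respect to this action, so $V_i$ acquires a $\bZ[G/N]$-module structure extending its $\bZ[Q]$-module structure; when $\Gamma$ acts trivially on $Q$ — as it does in all of our applications, where $Q$ is a coinvariant quotient $Q_r^{\mathrm{u}}(\varphi)$ — this is literally a $\Gamma$-representation by $\bZ[Q]$-linear automorphisms, as claimed, and the same construction works verbatim with Borel--Moore homology or cohomology in place of $H_i$. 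The routine parts are the group-theoretic identities and the standard behaviour of fibre transport and of covering-space lifts; the one step I expect to require genuine care is the coherence of the lifted monodromy on $\widehat X$ — it is precisely the section (i.e.\ the splitting of~\eqref{eq:input-ses}) that rigidifies the monodromy self-maps of $X$ at $x_0$ and thereby upgrades what would otherwise be only a projective action on $V_i$, well-defined up to the $Q$-action, to the honest representation in the statement.
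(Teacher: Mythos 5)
Your proof is correct and follows the same underlying route as the paper's, but at a greater level of explicitness. The paper works entirely with the local system $\cL_Q$: it observes that $\pi_1(Z)$ acts on $\pi_0(\mathrm{hAut}(X))$ (citing a reference for this classical fact about Serre fibrations) and that the $\Gamma$-invariance of the quotient $K \twoheadrightarrow Q$ forces $\cL_Q$ to be preserved, whence the action descends to $H_i(X;\cL_Q)$. You instead pass to the associated regular cover $\widehat X \to X$ with deck group $Q$, build the based monodromy self-maps $\phi_\gamma$ directly, lift them to $\widehat X$ via uniqueness of based lifts, and verify $\bZ[Q]$-(semi)linearity by a deck-transformation computation. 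This buys you two things the paper's phrasing suppresses: (i) the distinction between a merely projective action (well-defined up to the $Q$-ambiguity of an unbased lift) and an honest representation, and the observation that it is precisely the splitting of \eqref{eq:input-ses} that resolves this ambiguity by pinning the basepoint; and (ii) the precise mechanism by which $\Gamma$-invariance of the quotient enters — not just that $N = \ker(K \twoheadrightarrow Q)$ is preserved, but that the induced $\Gamma$-action on $Q$ is trivial, which is exactly what downgrades your semilinearity $\widehat\phi_\gamma \circ q = (\gamma \cdot q)\circ \widehat\phi_\gamma$ to honest $\bZ[Q]$-linearity. Do state the latter a bit more carefully: you phrase the triviality of the $\Gamma$-action on $Q$ as something that happens ``in all of our applications'', but in fact it is precisely what input \eqref{input:quotient} asserts (this is what ``the quotient $K \twoheadrightarrow Q$ is invariant under the $\Gamma$-action'' means, as distinct from the weaker condition that only the kernel is preserved — cf.\ Remark~\ref{rmk:twisted-version-of-the-construction}), so your proof does cover the stated hypotheses rather than a special case of them.
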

\begin{proof}
Composing the quotient $\pi_{1}(X) = K \twoheadrightarrow Q$ with the left regular representation of $Q$ on its group ring $\bZ[Q]$ (i.e.~the representation $Q\to \GL_{1}(\bZ[Q])$ defined by $q\mapsto (q \cdot -)$ where $(q \cdot -)$ is the left multiplication by $q$), we obtain a rank-$1$ local system on $X$ defined over $\bZ[Q]$. Here, a \emph{rank-$1$ local system on $X$ defined over a ring $R$} means a left representation of $\pi_{1}(X)$ on a right $R$-module that is free of rank $1$; equivalently, a $(\bZ[\pi_{1}(X)],R)$-bimodule that is free of rank $1$ as a right $R$-module. Let us denote this local system by $\cL_Q$. If $X$ is a sufficiently nice space so that it admits a universal cover (as it always will be in our examples), this local system may equivalently be viewed as a bundle of right $\bZ[Q]$-modules over $X$, constructed as follows: the quotient $\pi_{1}(X) = K \twoheadrightarrow Q$ corresponds to a regular covering $X^Q \to X$ with deck transformation group $Q$ (acting on the right); taking free abelian groups fibrewise turns this into a bundle of right $\bZ[Q]$-modules, which is the local system $\cL_Q$.

The fundamental group of the base of any Serre fibration acts (on the left) by homotopy automorphisms on its fibre (see for example \cite[\S 2]{PalmerTillmann} for the construction, although this is a classical fact). Thus we have a left $\Gamma$-action
\begin{equation}
\label{eq:action-by-homotopy-automorphisms}
\Gamma = \pi_{1}(Z) \longrightarrow \pi_{0}(\mathrm{hAut}(X)),
\end{equation}
where $\mathrm{hAut}(X)$ is the grouplike topological monoid of homotopy automorphisms of $X$ (i.e.~it is a monoid under composition, which is continuous with respect to the compact-open topology, and the discrete monoid $\pi_{0}(\mathrm{hAut}(X))$ is a group).
The assumption that $K \twoheadrightarrow Q$ is $\Gamma$-invariant means that the local system $\cL_Q$ on $X$ is invariant under the action \eqref{eq:action-by-homotopy-automorphisms}. There is therefore a well-defined induced (left) action on the twisted homology of $X$ with local system $\cL_Q$ in any degree $i\geq 0$:
\begin{equation}
\label{eq:hom-rep-ordinary-homology}
\Gamma \longrightarrow \mathrm{Aut}_{\bZ[Q]}(H_{i}(X;\cL_Q)).
\end{equation}
This is a representation over $\bZ[Q]$ since the local system $\cL_Q$ is defined over $\bZ[Q]$. More precisely, it is a left representation of $\Gamma$ in the category of right $\bZ[Q]$-modules.
\end{proof}

If the space $X$ is locally compact, we may alternatively take twisted \emph{Borel-Moore} homology in the last step of the above proof, to obtain another representation
\begin{equation}
\label{eq:hom-rep-BM-homology}
\Gamma \longrightarrow \mathrm{Aut}_{\bZ[Q]}(H_{i}^{\BM}(X;\cL_Q))
\end{equation}
of $\Gamma$ over $\bZ[Q]$ for each degree $i\geq 0$. We recall that the Borel-Moore homology group $H_{i}^{\BM}(X;\cL_Q)$ may be defined as the inverse limit $\lim_{A\in\compact(X)}(H_{i}(X,X\smallsetminus  A;\cL_Q))$, where $\compact(X)$ denotes the set of all compact subsets of $X$ partially ordered by inclusion. We also refer the reader to \cite[Chap.~V]{bredonsheaf} for a detailed introduction to Borel-Moore homology. The principal reason why we work with Borel-Moore homology instead of ordinary homology is the structural result of Theorem~\ref{thm:BM-homology-lemma} that we shall recall below.

\paragraph*{Weakly pro-nilpotent homological representations.}
To construct weakly pro-nilpotent representations of $\Gamma$, we fix inputs \eqref{input:ses} and \eqref{input:fibration} from above and allow input \eqref{input:quotient} (a $\Gamma$-invariant quotient of $K$) to vary. More precisely, we construct a canonical tower of $\Gamma$-invariant quotients of $K$ determined by the split short exact sequence \eqref{eq:input-ses}. Lemma~\ref{lem:e-nilpotency-class} will then imply that this tower is pro-nilpotent as long as \eqref{eq:input-ses} is eNCP.

The construction is summarised in the following diagram. See \cite{BellingeriGodelleGuaschi} for similar diagrams, involving a single level of nilpotence, in the setting of surface braid groups.

\begin{equation}
\label{eq:big-diagram-of-quotients}
\begin{split}
\begin{tikzpicture}
[x=1.2mm,y=1mm]
\node (l1) at (0,0) {$K$};
\node (m1) at (20,0) {$G$};
\node (r1) at (40,0) {$\Gamma$};
\inclusion{above}{$\varphi$}{(l1)}{(m1)}
\draw[->>] (m1) to (r1);
\draw[->,dotted] (r1) to[out=150,in=30] (m1);
\node (l2) at (0,-15) {$Q_{\infty}$};
\node (m2) at (20,-15) {$G/\LCS_{\infty}$};
\node (r2) at (40,-15) {$\Gamma/\LCS_{\infty}$};
\node (a2) at (-20,-20) {$Q^{\mathrm{u}}_{\infty}$};
\incl{(l2)}{(m2)}
\draw[->>] (m2) to (r2);
\draw[->>] (l2) to (a2);
\draw[->,dotted] (r2) to[out=150,in=30] (m2);
\draw[->>] (l1) to (l2);
\draw[->>] (m1) to (m2);
\draw[->>] (r1) to (r2);
\node (l3) at (0,-30) {$\lim(Q_{\bullet})$};
\node (m3) at (20,-30) {$\widehat{G}_{\mathrm{nil}}$};
\node (r3) at (40,-30) {$\widehat{\Gamma}_{\mathrm{nil}}$};
\node (a3) at (-20,-35) {$\lim(Q^{\mathrm{u}}_{\bullet})$};
\incl{(l3)}{(m3)}
\draw[->>] (m3) to (r3);
\draw[->] (l3) to (a3);
\draw[->,dotted] (r3) to[out=150,in=30] (m3);
\incl{(l2)}{(l3)}
\incl{(m2)}{(m3)}
\incl{(r2)}{(r3)}
\draw[->] (a2) to (a3);
\node (l4) at (0,-45) {$Q_{r+1}$};
\node (m4) at (20,-45) {$G/\LCS_{r+1}$};
\node (r4) at (40,-45) {$\Gamma/\LCS_{r+1}$};
\node (a4) at (-20,-50) {$Q^{\mathrm{u}}_{r+1}$};
\incl{(l4)}{(m4)}
\draw[->>] (m4) to (r4);
\draw[->>] (l4) to (a4);
\draw[->,dotted] (r4) to[out=150,in=30] (m4);
\draw[->>] (l3) to (l4);
\draw[->>] (m3) to (m4);
\draw[->>] (r3) to (r4);
\draw[->>] (a3) to (a4);
\node (l5) at (0,-60) {$Q_{r}$};
\node (m5) at (20,-60) {$G/\LCS_{r}$};
\node (r5) at (40,-60) {$\Gamma/\LCS_{r}$};
\node (a5) at (-20,-65) {$Q^{\mathrm{u}}_{r}$};
\incl{(l5)}{(m5)}
\draw[->>] (m5) to (r5);
\draw[->>] (l5) to (a5);
\draw[->,dotted] (r5) to[out=150,in=30] (m5);
\draw[->>] (l4) to (l5);
\draw[->>] (m4) to (m5);
\draw[->>] (r4) to (r5);
\draw[->>] (a4) to (a5);
\node (l6) at (0,-75) {$Q_{2}$};
\node (m6) at (20,-75) {$G^{\ab}$};
\node (r6) at (40,-75) {$\Gamma^{\ab}$};
\node (a6) at (-20,-80) {$Q^{\mathrm{u}}_{2}$};
\incl{(l6)}{(m6)}
\draw[->>] (m6) to (r6);
\draw[double equal sign distance] (l6) to (a6);
\draw[->,dotted] (r6) to[out=150,in=30] (m6);
\draw[->>] (l5) to (l6);
\draw[->>] (m5) to (m6);
\draw[->>] (r5) to (r6);
\draw[->>] (a5) to (a6);
\end{tikzpicture}
\end{split}
\end{equation}

The top row is the split short exact sequence \eqref{eq:input-ses}. We first describe the second-to-bottom row, where $r\geq 2$. The right-hand side is obtained by applying the functorial construction $-/\LCS_{r}$ (quotienting a group by the $r$-th term in its lower central series) to the right-hand side of \eqref{eq:input-ses}. We then define $Q_{r}$ to be the kernel of the induced surjection $G/\LCS_{r} \twoheadrightarrow \Gamma/\LCS_{r}$. By Lemma~\ref{lem:Qr-alternative}, this coincides with the definition of $Q_{r}(\varphi)$ in Definition~\ref{defn:Qr}; here we abbreviate it to $Q_{r}$ to avoid cluttering the diagram. There is an induced action of $\Gamma$ on $Q_{r}$ given by projecting $\Gamma$ onto $\Gamma/\LCS_{r}$, following the section to $G/\LCS_{r}$ and then acting by conjugation on $Q_{r}$. The quotient $Q_{r}^{\mathrm{u}}$ is defined to be the coinvariants of this action; this coincides with Definition~\ref{defn:Qru}; we are again abbreviating $Q_{r}^{\mathrm{u}}(\varphi)$ to $Q_{r}^{\mathrm{u}}$.

This describes the second-to-bottom row for any $r\geq 2$. The third-from-bottom row is the same, replacing $r$ with $r+1$; it has a canonical surjection onto the second-from-bottom row since the lower central series is a descending series. The bottom row is simply the $r=2$ instance of the second-from-bottom row; note that quotienting by $\LCS_{2}$ is the same as abelianisation. In the case $r=2$, the $\Gamma$-action on $Q_{2}$ is trivial, since this action is given by conjugation in the abelian group $G/\LCS_{2} = G^{\ab}$. Hence in this case $Q_{2}^{\mathrm{u}} = Q_{2}$.

This explains the bottom three rows, which in fact represent an infinite tower of $4$-term rows indexed by integers $r\geq 2$. The third-from-top row is defined by taking inverse limits of these four towers. On the right-hand side this gives us the pro-nilpotent completions of $G$ and of $\Gamma$, by Definition~\ref{d:pro-nilpotent-completion}. Taking pro-nilpotent completions is functorial, so the homomorphism $\widehat{G}_{\mathrm{nil}} \to \widehat{\Gamma}_{\mathrm{nil}}$ is again a split surjection, as indicated in the diagram.

The second-from-top row is constructed just as the second-from-bottom row, except that we quotient by the \emph{residue} $\LCS_{\infty}$, which is the intersection of all finite terms in the lower central series. In general, the canonical homomorphism $G \to \widehat{G}_{\mathrm{nil}}$ of a group to its pro-nilpotent completion factors as the quotient by its residue $G \twoheadrightarrow G/\LCS_{\infty}$ followed by an injection (see Remark~\ref{rmk:morphism-from-pronilpotent-completion}); this is why we have vertical injections $G/\LCS_{\infty} \hookrightarrow \widehat{G}_{\mathrm{nil}}$ and $\Gamma/\LCS_{\infty} \hookrightarrow \widehat{\Gamma}_{\mathrm{nil}}$ in the diagram. The vertical injection $Q_{\infty} \hookrightarrow \lim(Q_{\bullet})$ then follows by commutativity. This completes the construction of diagram \eqref{eq:big-diagram-of-quotients}.

For completeness, we mention that the composed vertical morphisms $G/\LCS_{\infty} \to G/\LCS_{r}$ are surjective for every $r$ (this does not follow from the information given in the diagram). Similarly for the vertical morphisms $\Gamma/\LCS_{\infty} \to \Gamma/\LCS_{r}$ and $Q_{\infty} \to Q_{r}$ and $Q^{\mathrm{u}}_{\infty} \to Q^{\mathrm{u}}_{r}$. Finally, we note that the two arrows that are not decorated as surjections or injections in the diagram are deliberately so: they are not tautologically injective or surjective in general.

\begin{lem}
\label{lem:general-recipe-weakly-pro-nilpotent}
Suppose we are given inputs \eqref{input:ses} and \eqref{input:fibration} and assume that $\varphi$ in \eqref{eq:input-ses} is eNCP. Then this determines a well-defined weakly pro-nilpotent representation of $\Gamma$ for each $i\geq 0$.
\end{lem}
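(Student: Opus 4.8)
The plan is to apply the construction of Lemma~\ref{lem:general-recipe} once for each $r\geq 2$, feeding it the inputs \eqref{input:ses} and \eqref{input:fibration} that we are given, together with the third input \eqref{input:quotient} taken to be the $\Gamma$-invariant quotient $K \twoheadrightarrow Q_r^{\mathrm{u}} = Q_r^{\mathrm{u}}(\varphi)$ appearing in the leftmost column of diagram \eqref{eq:big-diagram-of-quotients}. This is a valid choice of input \eqref{input:quotient}: the composite $K \to Q_r \to (Q_r)_\Gamma = Q_r^{\mathrm{u}}$ is unchanged under precomposition with the conjugation action of any $g\in\Gamma$, because $K\to Q_r$ is $\Gamma$-equivariant and $\Gamma$ acts trivially on the coinvariants $Q_r^{\mathrm{u}}$ by definition. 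Hence, for each homological degree $i\geq 0$, Lemma~\ref{lem:general-recipe} yields a representation $V_r \coloneqq H_i(X;\cL_{Q_r^{\mathrm{u}}})$ of $\Gamma$ over $\bZ[Q_r^{\mathrm{u}}]$, and likewise the Borel-Moore variant $H_i^{BM}(X;\cL_{Q_r^{\mathrm{u}}})$ when $X$ is locally compact. It then remains to check that $Q_\bullet^{\mathrm{u}}$ is a nilpotent tower of groups and that the $V_r$ are connected by transition homomorphisms of $\Gamma$-representations; these two facts constitute exactly the data of a weakly pro-nilpotent representation of $\Gamma$.

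The first point is where the eNCP hypothesis enters---and the only place it does. The surjections $Q_{r+1}^{\mathrm{u}} \twoheadrightarrow Q_r^{\mathrm{u}}$ are already recorded in diagram \eqref{eq:big-diagram-of-quotients}, and Lemma~\ref{lem:e-nilpotency-class}, applied to the eNCP homomorphism $\varphi$, states precisely that $Q_r^{\mathrm{u}}$ has nilpotency class exactly $r-1$; thus $Q_\bullet^{\mathrm{u}}$ satisfies Definition~\ref{def:nilpotent-tower-of-groups}.

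For the transition maps, note that the surjection $p \colon Q_{r+1}^{\mathrm{u}} \twoheadrightarrow Q_r^{\mathrm{u}}$ induces a morphism of local systems $\cL_{Q_{r+1}^{\mathrm{u}}} \to \cL_{Q_r^{\mathrm{u}}}$ on $X$: by the description of $\cL_Q$ recalled in the proof of Lemma~\ref{lem:general-recipe}, on fibres this is the ring homomorphism $\bZ[p]\colon \bZ[Q_{r+1}^{\mathrm{u}}] \to \bZ[Q_r^{\mathrm{u}}]$, and it becomes an isomorphism after applying $-\otimes_{\bZ[Q_{r+1}^{\mathrm{u}}]}\bZ[Q_r^{\mathrm{u}}]$. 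Passing to twisted homology gives a $\bZ[Q_{r+1}^{\mathrm{u}}]$-linear map $H_i(X;\cL_{Q_{r+1}^{\mathrm{u}}}) \to H_i(X;\cL_{Q_r^{\mathrm{u}}})$, with the target regarded as a $\bZ[Q_{r+1}^{\mathrm{u}}]$-module via $\bZ[p]$; by the universal property of extension of scalars this corresponds to a $\bZ[Q_r^{\mathrm{u}}]$-linear map
\[
V_{r+1}\otimes_{\bZ[Q_{r+1}^{\mathrm{u}}]}\bZ[Q_r^{\mathrm{u}}] \longrightarrow V_r ,
\]
and the same argument works verbatim for Borel-Moore homology (this is the point at which local compactness of $X$ is used). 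These maps are in general not isomorphisms, because of the $\mathrm{Tor}$-correction terms in the relevant universal coefficient spectral sequence, which is precisely why one obtains only a \emph{weakly} pro-nilpotent representation at this level of generality.

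The step requiring the most care---and where the passage to the \emph{untwisted} quotients $Q_r^{\mathrm{u}}$ of Definition~\ref{defn:Qru}, rather than to $Q_r$, is essential---is verifying that the transition maps just constructed are $\Gamma$-equivariant. This is a matter of naturality: the $\Gamma$-action on $H_i(X;\cL_{Q_r^{\mathrm{u}}})$ constructed in the proof of Lemma~\ref{lem:general-recipe} arises from the action of $\Gamma=\pi_1(Z)$ on $X$ by homotopy automorphisms, which preserves $\cL_{Q_r^{\mathrm{u}}}$ because $K\twoheadrightarrow Q_r^{\mathrm{u}}$ is $\Gamma$-invariant; the morphism of local systems $\cL_{Q_{r+1}^{\mathrm{u}}}\to\cL_{Q_r^{\mathrm{u}}}$ is compatible with these actions, since both are induced from $\pi_1(X)=K$ and $p$ intertwines the two $\Gamma$-invariant quotients. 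Crucially, because $\Gamma$ acts trivially on each $Q_r^{\mathrm{u}}$, these $\Gamma$-actions are $\bZ[Q_r^{\mathrm{u}}]$-linear (rather than only semilinear over automorphisms of $Q_r^{\mathrm{u}}$), so the induced map on twisted homology is $\Gamma$-equivariant, and extension of scalars preserves $\Gamma$-equivariance. Combining this with the nilpotent-tower statement and the existence of the transition maps exhibits $(V_r)_{r\geq 2}$ as a weakly pro-nilpotent representation of $\Gamma$, in each homological degree $i\geq 0$.
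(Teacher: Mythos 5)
Your proof is correct and follows essentially the same approach as the paper's: feed the $\Gamma$-invariant quotients $K \twoheadrightarrow Q_r^{\mathrm{u}}$ into Lemma~\ref{lem:general-recipe} to get the representations $V_r$, invoke Lemma~\ref{lem:e-nilpotency-class} (using the eNCP hypothesis) to see that $Q_\bullet^{\mathrm{u}}$ is a nilpotent tower, and construct the comparison maps via change of coefficients. Your formulation of the comparison map---as the tensor--hom adjoint of the map on homology induced by the morphism of local systems $\cL_{Q_{r+1}^{\mathrm{u}}} \to \cL_{Q_r^{\mathrm{u}}}$---is precisely the canonical universal-coefficient homomorphism \eqref{eq:canonical-homomorphism} that the paper uses, just expressed before adjunction rather than after; in the Borel--Moore case your formulation is in fact a bit cleaner, since inducing on $H_*^{BM}$ from the morphism of local systems and then taking adjoints avoids the explicit two-step inverse-limit decomposition of \eqref{eq:canonical-homomorphism-BM-homology}.
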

\begin{proof}
Input \eqref{input:ses}, namely the split short exact sequence \eqref{eq:input-ses}, induces diagram \eqref{eq:big-diagram-of-quotients}, in particular it induces the tower $Q_{\bullet}^{\mathrm{u}}$ of groups in the bottom-left of this diagram. Since $\varphi$ is eNCP, Lemma~\ref{lem:e-nilpotency-class} implies that $Q_{r}^{\mathrm{u}}$ has nilpotency class exactly $r-1$. Thus $Q_{\bullet}^{\mathrm{u}}$ is a nilpotent tower of groups in the sense of Definition~\ref{def:nilpotent-tower-of-groups}.

The quotient $K \twoheadrightarrow Q_{r}^{\mathrm{u}}$ in diagram \eqref{eq:big-diagram-of-quotients} is $\Gamma$-invariant by construction, so we may apply Lemma~\ref{lem:general-recipe} with this quotient as input \eqref{input:quotient} to obtain the representation $V_{r} = H_{i}(X;\cL_{Q_{r}^{\mathrm{u}}})$ of $\Gamma$ over $\bZ[Q_{r}^{\mathrm{u}}]$.

To complete the construction of a weakly pro-nilpotent representation, we must now construct comparison homomorphisms
\begin{equation}
\label{eq:comparison-homomorphism}
V_{r+1} \otimes_{\bZ[Q_{r+1}^{\mathrm{u}}]} \bZ[Q_{r}^{\mathrm{u}}] \longrightarrow V_{r}
\end{equation}
of $\Gamma$-representations over $\bZ[Q_{r}^{\mathrm{u}}]$. To do this, first recall that, for any space $X$, local system $\cL$ on $X$ defined over $R$ and ring homomorphism $\theta \colon R \to S$, there is a canonical homomorphism
\begin{equation}
\label{eq:canonical-homomorphism}
H_{i}(X;\cL) \otimes_{R} S \longrightarrow H_{i}(X;\cL \otimes_{R} S)
\end{equation}
of $S$-modules commuting with the action of $\pi_{0}(\mathrm{hAut}(X))$. This is one of the maps appearing in the universal coefficient theorem, although we only need its existence. If we take $R = \bZ[Q_{r+1}^{\mathrm{u}}]$ and $S = \bZ[Q_{r}^{\mathrm{u}}]$, with $\theta$ induced by the quotient $Q_{r+1}^{\mathrm{u}} \twoheadrightarrow Q_{r}^{\mathrm{u}}$, and set $\cL = \cL_{Q_{r+1}^{\mathrm{u}}}$, then \eqref{eq:canonical-homomorphism} becomes \eqref{eq:comparison-homomorphism} since $\cL_{Q_{r+1}^{\mathrm{u}}} \otimes_{\bZ[Q_{r+1}^{\mathrm{u}}]} \bZ[Q_{r}^{\mathrm{u}}] = \cL_{Q_{r}^{\mathrm{u}}}$. It is a homomorphism of $\Gamma$-representations since the action of $\Gamma$ factors through $\pi_{0}(\mathrm{hAut}(X))$.
\end{proof}

If $X$ is locally compact, we may alternatively apply Borel-Moore homology \eqref{eq:hom-rep-BM-homology} rather than ordinary homology \eqref{eq:hom-rep-ordinary-homology}, in which case we set $V_{r} = H_{i}^{\BM}(X;\cL_{Q_{r}^{\mathrm{u}}})$. We then construct the comparison homomorphisms \eqref{eq:comparison-homomorphism} as follows. The canonical homomorphism \eqref{eq:canonical-homomorphism} exists also for relative homology. Quantifying over all compact subsets $A \subseteq X$ and taking inverse limits, we obtain:
\begin{align}
\begin{split}
\label{eq:canonical-homomorphism-BM-homology}
H_{i}^{\BM}(X;\cL) \otimes_{R} S &= \lim_{A\in\compact(X)} (H_{i}(X,X \smallsetminus A;\cL)) \otimes_{R} S \\
&\to \lim_{A\in\compact(X)} (H_{i}(X,X \smallsetminus A;\cL) \otimes_{R} S) \\
&\to \lim_{A\in\compact(X)} (H_{i}(X,X \smallsetminus A;\cL \otimes_{R} S)) \\
&= H_{i}^{\BM}(X;\cL \otimes_{R} S).
\end{split}
\end{align}
The first arrow above is induced by the universal property of $\lim_{A\in\compact(X)} (H_{i}(X,X \smallsetminus A;\cL) \otimes_{R} S)$ as an inverse limit (it is not in general an isomorphism since $- \otimes_R S$ does not commute with inverse limits), while the second arrow is the map induced by taking the inverse limit of the homomorphisms analogous to \eqref{eq:canonical-homomorphism} in relative homology (its failure to be an isomorphism in general is controlled by the universal coefficient theorem).
Specialising as above, this gives us \eqref{eq:comparison-homomorphism} in the Borel-Moore setting.

\begin{rmk}
\label{rmk:twisted-version-of-the-construction}
If, in Lemma~\ref{lem:general-recipe-weakly-pro-nilpotent}, we make the weaker assumption that $\varphi$ in \eqref{eq:input-ses} is NCP, rather than eNCP, then we may apply the same construction using the tower of quotients $Q_{\bullet}$ of $K$ instead of $Q_{\bullet}^{\mathrm{u}}$. However, since the quotients $K \twoheadrightarrow Q_{r}$ are not in general $\Gamma$-invariant (although their kernels are preserved by the $\Gamma$-action), we obtain representations $H_{i}(X;\cL_{Q_{r}})$ of $\Gamma$ that commute with the $\bZ[Q_{r}]$-module structure only up to a ``twist''.
\end{rmk}

\paragraph*{Lifting to representations over a pro-nilpotent group.}

As an aside, we observe that, given the same inputs as in Lemma~\ref{lem:general-recipe-weakly-pro-nilpotent}, the weakly pro-nilpotent representation over $Q^{\mathrm{u}}_{\bullet}$ constructed in that lemma may be lifted to a representation over $\bZ[\lim(Q^{\mathrm{u}}_{\bullet})]$.

\begin{lem}
\label{lem:lifting-to-limit}
Suppose we are given inputs \eqref{input:ses} and \eqref{input:fibration} and assume that $\varphi$ in \eqref{eq:input-ses} is eNCP. Then this determines a representation of $\Gamma$ over the integral group ring of the pro-nilpotent group $\lim(Q^{\mathrm{u}}_{\bullet})$, which lifts the weakly pro-nilpotent representation of $\Gamma$ over $Q^{\mathrm{u}}_{\bullet}$ from Lemma~\ref{lem:general-recipe-weakly-pro-nilpotent}.
\end{lem}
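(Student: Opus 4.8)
The plan is to run the construction of Lemma~\ref{lem:general-recipe} one more time, but feeding in the homomorphism $\psi \colon K \to P$, where $P \coloneqq \mathrm{lim}(Q^{\mathrm u}_\bullet)$ and $\psi$ is induced by the compatible family of $\Gamma$-invariant quotients $K \twoheadrightarrow Q^{\mathrm u}_r$ from the bottom-left column of diagram~\eqref{eq:big-diagram-of-quotients}, in place of a single $\Gamma$-invariant quotient of $K$. The only extra point of care is that $\psi$ need not be surjective (equivalently, $K \to Q^{\mathrm u}_\infty$ need not have dense image in $\mathrm{lim}(Q^{\mathrm u}_\bullet)$), so the associated local system cannot be interpreted via a regular covering of $X$ with deck group $P$. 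This is harmless: only the description of the local system as the $\pi_1(X)$-representation $K \xrightarrow{\psi} P \hookrightarrow \mathrm{Aut}(\bZ[P])$ given by the right regular action of $P$ on $\bZ[P]$ is actually used in the proof of Lemma~\ref{lem:general-recipe}, and this makes sense for any homomorphism, not just a surjection.

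In detail, I would first note that $\psi$ is $\Gamma$-invariant in the sense of input~\eqref{input:quotient} of the general recipe, i.e.\ $\psi$ is equivariant for the conjugation action of $\Gamma$ on $K$ and the trivial action on $P$: each quotient $K \twoheadrightarrow Q^{\mathrm u}_r$ has this property by construction (the coinvariants quotient of Definition~\ref{defn:Qru} kills the $\Gamma$-action), and these maps jointly determine $\psi$. Composing $\psi$ with the right regular representation of $P$ produces a rank-$1$ local system $\cL_P$ on $X$ over $\bZ[P]$, and $\Gamma$-invariance of $\psi$ ensures that $\cL_P$ is fixed by the action~\eqref{eq:action-by-homotopy-automorphisms} of $\Gamma = \pi_1(Z)$ on $X$ through $\pi_0(\mathrm{hAut}(X))$. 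Exactly as in the proof of Lemma~\ref{lem:general-recipe}, there is then a well-defined induced action
\[
\Gamma \longrightarrow \mathrm{Aut}_{\bZ[P]}\bigl(H_i(X;\cL_P)\bigr),
\]
i.e.\ a representation of $\Gamma$ over $\bZ[P]$, for each $i \geq 0$ (and, when $X$ is locally compact, also over Borel--Moore homology $H_i^{BM}(X;\cL_P)$, as in~\eqref{eq:hom-rep-BM-homology}). That $P$ is genuinely pro-nilpotent is where the eNCP hypothesis enters: by Lemma~\ref{lem:e-nilpotency-class}, $Q^{\mathrm u}_r$ has nilpotency class exactly $r-1$, so $Q^{\mathrm u}_\bullet$ is a nilpotent tower in the sense of Definition~\ref{def:nilpotent-tower-of-groups} and $P = \mathrm{lim}(Q^{\mathrm u}_\bullet)$ is its inverse limit.

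It then remains to identify this representation as a lift of the weakly pro-nilpotent representation $V_\bullet$ of Lemma~\ref{lem:general-recipe-weakly-pro-nilpotent}, where $V_r = H_i(X;\cL_{Q^{\mathrm u}_r})$. For each $r\geq 2$ the composite $K \xrightarrow{\psi} P \twoheadrightarrow Q^{\mathrm u}_r$ is the defining quotient $K \twoheadrightarrow Q^{\mathrm u}_r$, so base-changing the local system along $\bZ[P] \to \bZ[Q^{\mathrm u}_r]$ gives $\cL_P \otimes_{\bZ[P]} \bZ[Q^{\mathrm u}_r] = \cL_{Q^{\mathrm u}_r}$. Applying the canonical homomorphism~\eqref{eq:canonical-homomorphism} (or its Borel--Moore version~\eqref{eq:canonical-homomorphism-BM-homology}) with $R = \bZ[P]$, $S = \bZ[Q^{\mathrm u}_r]$, $\cL = \cL_P$ therefore yields homomorphisms
\[
H_i(X;\cL_P) \otimes_{\bZ[P]} \bZ[Q^{\mathrm u}_r] \longrightarrow H_i(X;\cL_{Q^{\mathrm u}_r}) = V_r
\]
of $\Gamma$-representations over $\bZ[Q^{\mathrm u}_r]$, which by naturality of~\eqref{eq:canonical-homomorphism} in the ring map are compatible with the transition maps $Q^{\mathrm u}_{r+1} \twoheadrightarrow Q^{\mathrm u}_r$; hence they assemble to a morphism from the weakly pro-nilpotent representation induced by $H_i(X;\cL_P)$ (with its tautological comparison isomorphisms) onto $V_\bullet$. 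This is the asserted lifting.

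I expect the only real work to be bookkeeping: carefully checking that non-surjectivity of $\psi$ causes no issue anywhere in the proof of Lemma~\ref{lem:general-recipe} --- the bundle-of-$\bZ[P]$-modules picture is never actually needed, only the local-system-as-representation picture --- and verifying that the layerwise comparison maps above really do organise into an honest morphism of weakly pro-nilpotent representations, which is exactly the naturality of~\eqref{eq:canonical-homomorphism} (and, in the Borel--Moore case, of~\eqref{eq:canonical-homomorphism-BM-homology}) in the coefficient ring homomorphism.
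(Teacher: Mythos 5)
Your proof is correct, but it takes a genuinely different route from the paper's. The paper explicitly declines to apply Lemma~\ref{lem:general-recipe} directly to $K \to \mathrm{lim}(Q^{\mathrm{u}}_\bullet)$ on the grounds that this map need not be surjective; instead it first factors through the image $Q^{\mathrm{u}}_{\mathrm{lim}}$ of $Q^{\mathrm{u}}_\infty$ inside $\mathrm{lim}(Q^{\mathrm{u}}_\bullet)$, producing a representation over the subring $\bZ[Q^{\mathrm{u}}_{\mathrm{lim}}] \subseteq \bZ[\mathrm{lim}(Q^{\mathrm{u}}_\bullet)]$ and then passing to the larger ring. You instead observe that the surjectivity hypothesis in Lemma~\ref{lem:general-recipe} is inessential: only the ``composition with the right regular representation'' description of the local system is used, which makes sense for any $\Gamma$-invariant homomorphism $K \to P$. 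Both approaches are valid, but they are not interchangeable constructions: setting $\cL_P = \cL_{Q^{\mathrm{u}}_{\mathrm{lim}}} \otimes_{\bZ[Q^{\mathrm{u}}_{\mathrm{lim}}]} \bZ[P]$, your module is $H_i(X;\cL_P)$ while the paper's is (the scalar extension of) $H_i(X;\cL_{Q^{\mathrm{u}}_{\mathrm{lim}}})$, and the canonical map between these need not be an isomorphism in general. Since the lemma is an existence statement (``determines a representation \ldots which lifts \ldots''), either representation serves; your route is arguably cleaner as it avoids introducing $Q^{\mathrm{u}}_{\mathrm{lim}}$, while the paper's keeps strictly to the stated hypotheses of Lemma~\ref{lem:general-recipe}.

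One small inaccuracy in a parenthetical: you say that non-surjectivity of $K \to \mathrm{lim}(Q^{\mathrm{u}}_\bullet)$ is ``equivalently'' failure of dense image. In the pro-discrete topology on the inverse limit, the image of $K$ \emph{is} always dense (it surjects onto each finite layer $Q^{\mathrm{u}}_r$); what may fail is surjectivity itself, which is a strictly stronger condition here. This does not affect your argument, since you correctly identify that only the ``local-system-as-representation'' interpretation is needed, and that interpretation requires neither surjectivity nor density.
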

\begin{proof}
Consider just the top two rows of diagram \eqref{eq:big-diagram-of-quotients}. The quotient $K \twoheadrightarrow Q^{\mathrm{u}}_{\infty}$ is $\Gamma$-invariant by construction, so Lemma~\ref{lem:general-recipe} constructs a representation $V_{\infty} = H_{i}(X;\cL_{Q_{\infty}^{\mathrm{u}}})$ of $\Gamma$ over $\bZ[Q_{\infty}^{\mathrm{u}}]$. This lifts the representation $V_{r} = H_{i}(X;\cL_{Q_{r}^{\mathrm{u}}})$ of $\Gamma$ over $\bZ[Q_{r}^{\mathrm{u}}]$ in the sense that there are homomorphisms
\begin{equation}
\label{eq:comparison-homomorphism-infinity}
V_{\infty} \otimes_{\bZ[Q_{\infty}^{\mathrm{u}}]} \bZ[Q_{r}^{\mathrm{u}}] \longrightarrow V_{r} ,
\end{equation}
constructed exactly as in the proof of Lemma~\ref{lem:general-recipe-weakly-pro-nilpotent}. The homomorphisms \eqref{eq:comparison-homomorphism-infinity} are compatible with the comparison homomorphisms \eqref{eq:comparison-homomorphism}, so the representation $V_{\infty}$ of $\Gamma$ over $\bZ[Q_{\infty}^{\mathrm{u}}]$ lifts the (weakly) pro-nilpotent representation $V_{\bullet}$ of $\Gamma$ over $Q^{\mathrm{u}}_{\bullet}$ constructed in Lemma~\ref{lem:general-recipe-weakly-pro-nilpotent}.

This is not yet exactly what we want, which is a representation over $\bZ[\lim(Q^{\mathrm{u}}_{\bullet})]$. We cannot construct this directly, since the homomorphism $K \to \lim(Q^{\mathrm{u}}_{\bullet})$ in diagram \eqref{eq:big-diagram-of-quotients} is not necessarily surjective. Also, the homomorphism $Q_{\infty}^{\mathrm{u}} \to \lim(Q^{\mathrm{u}}_{\bullet})$ in diagram \eqref{eq:big-diagram-of-quotients} is not necessarily injective, so we also cannot simply consider $V_{\infty}$ as a representation over $\bZ[\lim(Q^{\mathrm{u}}_{\bullet})]$ by inclusion of rings. Instead, let us denote by $Q^{\mathrm{u}}_{\lim}$ the image of the homomorphism $Q_{\infty}^{\mathrm{u}} \to \lim(Q^{\mathrm{u}}_{\bullet})$ in diagram \eqref{eq:big-diagram-of-quotients}. The homomorphism $K \to Q^{\mathrm{u}}_{\lim}$ from diagram \eqref{eq:big-diagram-of-quotients} is thus surjective by construction, so the (weakly) pro-nilpotent representation $V_{\bullet}$ lifts to a representation $V_{\lim}$ of $\Gamma$ over $\bZ[Q^{\mathrm{u}}_{\lim}]$ by a verbatim repeat of the previous paragraph with $Q_{\infty}^{\mathrm{u}}$ replaced by $Q^{\mathrm{u}}_{\lim}$. Since $\bZ[Q^{\mathrm{u}}_{\lim}]$ is now a subring of $\bZ[\lim(Q^{\mathrm{u}}_{\bullet})]$, this finishes the desired construction.
\end{proof}

We note that the above construction goes through equally well if we work with Borel-Moore homology instead of ordinary homology.

\begin{rmk}
This is in contrast to the general situation of (weakly) pro-nilpotent representations over a pro-nilpotent tower of groups vs.\ representations over the group-ring of the inverse limit of the tower: in \S\ref{s:pro-nilpotent} (see Lemma~\ref{lem:pronilpotent-counterexample}) we observed that a lift does \emph{not} always exist in general.
\end{rmk}

\paragraph*{Genuine pro-nilpotent homological representations.}
We now consider conditions on the space $X$ guaranteeing that the above weakly pro-nilpotent representations are in fact \emph{genuine} pro-nilpotent representations, i.e.~the comparison homomorphisms \eqref{eq:comparison-homomorphism} are isomorphisms.

\begin{lem}
\label{lem:criterion-for-comparison-isomorphisms}
Let $X$ be a space, $\cL$ a local system on $X$ over $R$ and $\theta \colon R \to S$ a ring homomorphism. Let $X' \subseteq X$ be a subspace such that the inclusion induces isomorphisms on twisted Borel-Moore homology for all local systems on $X$. Suppose moreover that $X'$ is homeomorphic to a disjoint union of copies of $\bR^k$ for fixed $k\geq 0$. Then \eqref{eq:canonical-homomorphism-BM-homology} is an isomorphism.
\end{lem}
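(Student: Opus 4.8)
The plan is to reduce to the case $X = X'$ and then read off both sides of \eqref{eq:canonical-homomorphism-BM-homology} from the explicit Borel--Moore homology of a disjoint union of Euclidean spaces. First I would record that \eqref{eq:canonical-homomorphism-BM-homology} is natural in $X$: both maps composing it --- the interchange map $\bigl(\mathrm{lim}_A H_i(X,X\smallsetminus A;\cL)\bigr)\otimes_R S \to \mathrm{lim}_A\bigl(H_i(X,X\smallsetminus A;\cL)\otimes_R S\bigr)$ and the inverse limit of the universal-coefficient maps \eqref{eq:canonical-homomorphism} for the pairs $(X,X\smallsetminus A)$ --- are natural with respect to maps of spaces, in the appropriate variance. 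Applied to the inclusion $\iota\colon X'\hookrightarrow X$, this gives a commuting square whose horizontal arrows are \eqref{eq:canonical-homomorphism-BM-homology} for $X$ and for $X'$ and whose vertical arrows are the maps induced by $\iota$ on $H_i^{BM}(-;\cL)$ and on $H_i^{BM}(-;\cL\otimes_R S)$. By hypothesis the two vertical arrows are isomorphisms, so it is enough to prove the lemma after replacing $X$ by $X'$.

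Now suppose $X' \cong \bigsqcup_{j\in J}\bR^k$. Every compact subset of $X'$ meets only finitely many components, so $H_i^{BM}$ converts the disjoint union into a product: $H_i^{BM}(X';\cM) \cong \prod_{j\in J} H_i^{BM}(\bR^k; \cM_j)$ for every local system $\cM$ on $X'$, where $\cM_j$ denotes the restriction to the $j$-th component. Since $\bR^k$ is simply connected, $\cL_j$ and $(\cL\otimes_R S)_j$ are the constant local systems with fibres $L_j$ and $L_j\otimes_R S$ for some $R$-module $L_j$, and the Borel--Moore homology of $\bR^k$ with constant coefficients in a module $M$ is $M$ in degree $k$ and $0$ otherwise (Poincar\'e duality on the oriented manifold $\bR^k$, or a direct computation over the cofinal family of closed balls). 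Hence both sides of \eqref{eq:canonical-homomorphism-BM-homology} for $X'$ vanish outside degree $k$, and in degree $k$ the naturality of these identifications turns \eqref{eq:canonical-homomorphism-BM-homology} into the evident map $\bigl(\prod_{j\in J}L_j\bigr)\otimes_R S \to \prod_{j\in J}(L_j\otimes_R S)$. On a single factor this is the identity of $L_j\otimes_R S$ --- computing over closed balls, the relevant relative homology is just $L_j$ concentrated in degree $k$, so both the interchange map and \eqref{eq:canonical-homomorphism} are trivially isomorphisms --- and the product map is then an isomorphism because $-\otimes_R S$ commutes with the (finite) product. In every application of the lemma $J$ is finite, indeed $H_k^{BM}(X')$ will be a finitely generated free $R$-module, so this last point is automatic.

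I expect the only real obstacle to be the bookkeeping in the reduction step: one must check the naturality of \eqref{eq:canonical-homomorphism-BM-homology} with some care, and in particular make sure that the variance of the functoriality of Borel--Moore homology used for $\iota\colon X'\hookrightarrow X$ (covariant for proper maps, contravariant for open inclusions) matches the variance in which \eqref{eq:canonical-homomorphism-BM-homology} is natural, reading the hypothesis ``induces isomorphisms on twisted Borel--Moore homology'' accordingly. Everything else --- the product formula for disjoint unions, the computation on $\bR^k$, and the passage through the finite product --- is routine.
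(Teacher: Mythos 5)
Your proof is correct and mirrors the paper's: both form the commuting square comparing \eqref{eq:canonical-homomorphism-BM-homology} for $X'$ and for $X$, observe that the vertical maps induced by the inclusion $X' \hookrightarrow X$ are isomorphisms by hypothesis, and then verify the map for $X'$ directly by noting that local systems on its contractible components are trivial so that both sides are the free module on the set of components in degree $k$. You are slightly more explicit than the paper about the need for the set of components to be finite (so that $-\otimes_R S$ commutes with the product arising from Borel--Moore homology of a disjoint union), a point the paper's phrase ``free $S$-module generated by the components of $X'$'' implicitly assumes.
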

\begin{proof}
Consider the commutative square:
\[
\begin{tikzcd}
H_{i}^{\BM}(X';\cL) \otimes_{R} S \ar[rr] \ar[d] && H_{i}^{\BM}(X';\cL \otimes_{R} S) \ar[d] \\
H_{i}^{\BM}(X;\cL) \otimes_{R} S \ar[rr] && H_{i}^{\BM}(X;\cL \otimes_{R} S).
\end{tikzcd}
\]
The vertical maps are isomorphisms by hypothesis. The top horizontal map is an isomorphism since both sides are canonically isomorphic to the free $S$-module generated by the components of $X'$. Note that all local systems on $X'$ are trivial, in other words untwisted, since it has contractible components. Hence the bottom horizontal map is also an isomorphism.
\end{proof}

The hypotheses of Lemma~\ref{lem:criterion-for-comparison-isomorphisms} appear a little ad hoc, but they occur very naturally when the space $X$ is a (partitioned) configuration space of points, as we discuss shortly. First, we combine Lemmas~\ref{lem:general-recipe-weakly-pro-nilpotent} and \ref{lem:criterion-for-comparison-isomorphisms} to prove:

\begin{coro}
\label{coro:general-recipe-pro-nilpotent}
Suppose we are given inputs \eqref{input:ses} and \eqref{input:fibration}. Assume that $\varphi$ in \eqref{eq:input-ses} is eNCP. Assume also that $X$ is locally compact and that there is a subspace $X'$ of $X$, homeomorphic to a disjoint union of copies of $\bR^k$ for a fixed $k\geq 0$, such that the inclusion $X' \hookrightarrow X$ induces isomorphisms on twisted Borel-Moore homology for all local systems on $X$. Then this determines a well-defined (genuine) pro-nilpotent representation of $\Gamma$.
\end{coro}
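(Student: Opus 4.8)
The plan is to assemble the corollary out of Lemma~\ref{lem:general-recipe-weakly-pro-nilpotent} and Lemma~\ref{lem:criterion-for-comparison-isomorphisms}, the point being that the extra hypothesis on $X'$ upgrades the comparison homomorphisms from mere homomorphisms to isomorphisms. First I would invoke the Borel-Moore version of Lemma~\ref{lem:general-recipe-weakly-pro-nilpotent} (legitimate since $X$ is assumed locally compact): because $\varphi$ is eNCP, this produces the nilpotent tower $Q^{\mathrm{u}}_\bullet$ of diagram \eqref{eq:big-diagram-of-quotients}, the $\Gamma$-representations $V_r = H_i^{BM}(X;\cL_{Q_r^{\mathrm{u}}})$ over $\bZ[Q_r^{\mathrm{u}}]$ for each $r\geq 2$ and each homological degree $i\geq 0$, and the comparison homomorphisms \eqref{eq:comparison-homomorphism} of $\Gamma$-representations over $\bZ[Q_r^{\mathrm{u}}]$. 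So it only remains to check that these comparison homomorphisms are isomorphisms.

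Next I would unwind how \eqref{eq:comparison-homomorphism} was built in the Borel-Moore case: it is precisely the specialisation of the canonical map \eqref{eq:canonical-homomorphism-BM-homology} to $R = \bZ[Q_{r+1}^{\mathrm{u}}]$, $S = \bZ[Q_r^{\mathrm{u}}]$, with $\theta$ the ring homomorphism induced by the quotient $Q_{r+1}^{\mathrm{u}} \twoheadrightarrow Q_r^{\mathrm{u}}$, and $\cL = \cL_{Q_{r+1}^{\mathrm{u}}}$, using the identity $\cL_{Q_{r+1}^{\mathrm{u}}} \otimes_{\bZ[Q_{r+1}^{\mathrm{u}}]} \bZ[Q_r^{\mathrm{u}}] = \cL_{Q_r^{\mathrm{u}}}$. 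With this identification in hand, Lemma~\ref{lem:criterion-for-comparison-isomorphisms} applies directly: its hypotheses are exactly the standing assumptions of the corollary, namely that there is a subspace $X' \subseteq X$ homeomorphic to a disjoint union of copies of $\bR^k$ for a fixed $k\geq 0$ whose inclusion induces isomorphisms on twisted Borel-Moore homology for every local system on $X$. Hence \eqref{eq:canonical-homomorphism-BM-homology}, and therefore each comparison homomorphism \eqref{eq:comparison-homomorphism}, is an isomorphism of $\Gamma$-representations over $\bZ[Q_r^{\mathrm{u}}]$. Consequently the data $(V_r)_{r\geq 2}$ together with these isomorphisms satisfy Definition~\ref{defn:pro-nilpotent-representation}, i.e.\ they constitute a genuine pro-nilpotent representation of $\Gamma$ over the nilpotent tower $Q^{\mathrm{u}}_\bullet$.

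The only mildly delicate point — and the closest this argument comes to an obstacle — is the bookkeeping in the previous paragraph: one must be sure that the inverse limit over compact subsets $A \subseteq X$ used to define \eqref{eq:comparison-homomorphism} in the proof of Lemma~\ref{lem:general-recipe-weakly-pro-nilpotent} is the very same inverse limit appearing in \eqref{eq:canonical-homomorphism-BM-homology}, so that Lemma~\ref{lem:criterion-for-comparison-isomorphisms} can be quoted verbatim rather than merely ``morally''. This is immediate from the constructions but is worth spelling out. Beyond that, the proof is a purely formal combination of the two preceding lemmas, and no further input is needed.
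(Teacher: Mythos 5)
Your proof is correct and follows exactly the same route as the paper's: invoke the Borel-Moore variant of Lemma~\ref{lem:general-recipe-weakly-pro-nilpotent} using local compactness and the eNCP hypothesis, identify the comparison homomorphisms \eqref{eq:comparison-homomorphism} with the canonical maps \eqref{eq:canonical-homomorphism-BM-homology}, and apply Lemma~\ref{lem:criterion-for-comparison-isomorphisms} to upgrade them to isomorphisms. The paper states this in two sentences; your write-up just spells out the same steps in more detail, which is fine.
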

\begin{proof}
Since $X$ is locally compact, we may apply the Borel-Moore variant of the construction in Lemma~\ref{lem:general-recipe-weakly-pro-nilpotent} to obtain a weakly pro-nilpotent representation $(V_{r})_{r\geq 2}$ of $\Gamma$ with comparison homomorphisms $\eqref{eq:comparison-homomorphism} = \eqref{eq:canonical-homomorphism-BM-homology}$. Lemma~\ref{lem:criterion-for-comparison-isomorphisms} implies that these are isomorphisms, so this is in fact a genuine pro-nilpotent representation.
\end{proof}

In our examples, the space $X$ will typically be a partitioned configuration space
\[
C_{\bk}(N) = \{ (p_{1},\ldots,p_{k}) \in N^k \mid p_{i} \neq p_{j} \text{ for } i\neq j \} / \mathfrak{S}_{\bk} ,
\]
where $N$ is a manifold, $\bk = (k_{1},\ldots,k_{l})$ is an $l$-tuple of positive integers summing to $k\geq 1$ and $\mathfrak{S}_{\bk} = \mathfrak{S}_{k_{1}} \times \cdots \times \mathfrak{S}_{k_{l}}$, considered as a subgroup of $\mathfrak{S}_{k}$.

In \cite[Th.~2.1]{PSIIp}, we prove the following theorem, which gives sufficient conditions for an inclusion of configuration spaces to induce isomorphisms on twisted Borel-Moore homology. It is a generalisation of a result originally due to Bigelow~\cite[Lem.~3.1]{BigelowHomrep}, and also recovers, as special cases, similar results appearing in \cite[Lem.~3.3]{AnKo}, \cite[Th.~6.6]{anghelpalmer} and \cite[Th.~A(a)]{BlanchetPalmerShaukat}.

\begin{thm}[{\cite[Th.~2.1]{PSIIp}}]
\label{thm:BM-homology-lemma}
Let $M$ be a compact metric space with closed subspaces $A \subseteq B \subseteq M$, where $M$ and $B$ are locally compact. Suppose that there exists a strong deformation retraction $h$ of $M$ onto $B$, in other words a map $h \colon [0,1] \times M \to M$ satisfying the following two conditions:
\begin{itemizeb}
\item $h(t,x)=x$ whenever $t=0$ or $x \in B$,
\item $h(1,x) \in B$ for all $x \in M$,
\end{itemizeb}
such that moreover the following two additional conditions hold:
\begin{itemizeb}
\item $h(t,-)$ is non-expanding for all $t$, i.e. $d(x,y) \geq d(h(t,x),h(t,y))$ for all $x,y \in M$,
\item $h(t,-)$ is a topological self-embedding of $M$ for all $t<1$.
\end{itemizeb}
Then, for any $\bk = (k_{1},\ldots,k_{l})$, the inclusion of configuration spaces
\begin{equation}
\label{eq:inclusion-of-configuration-spaces}
C_{\bk}(B \smallsetminus A) \lhook\joinrel\longrightarrow C_{\bk}(M \smallsetminus A)
\end{equation}
induces isomorphisms on Borel-Moore homology in all degrees and for all local coefficient systems on $C_{\bk}(M \smallsetminus A)$ that extend to $C_{\bk}(M)$.
\end{thm}

\begin{coro}
\label{coro:BM-homology-lemma}
Let $M$ be a compact manifold with boundary, $\Lambda \subseteq M$ an embedded finite graph and $A \subseteq \Lambda \cap \partial M$ a subspace such that $\Lambda \smallsetminus A$ is a disjoint union of open intervals. Suppose that there is a strong deformation retraction of $M$ onto $\Lambda$ satisfying the two additional conditions of Theorem~\ref{thm:BM-homology-lemma} with respect to some metric on $M$. Then, for any $\bk = (k_{1},\ldots,k_{l})$, the space $X = C_{\bk}(M \smallsetminus A)$ satisfies the hypotheses of Corollary~\ref{coro:general-recipe-pro-nilpotent}.
\end{coro}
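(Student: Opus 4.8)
The plan is to verify directly the two hypotheses that Corollary~\ref{coro:general-recipe-pro-nilpotent} imposes on the space $X = C_{\bk}(M\smallsetminus A)$: that $X$ is locally compact, and that it contains a subspace $X'$ homeomorphic to a disjoint union of copies of $\bR^k$ for a fixed $k$ (here $k = k_1+\cdots+k_l$) such that the inclusion $X'\hookrightarrow X$ induces isomorphisms on twisted Borel--Moore homology for every local system on $X$. The natural candidate is $X' = C_{\bk}(\Lambda\smallsetminus A)$. Local compactness of $X$ is immediate: fixing the metric on $M$ with respect to which the deformation retraction is non-expanding makes $M$ a compact metric space, while $A\subseteq\Lambda\cap\partial M$ is closed (being closed in the closed subspace $\Lambda$), so $M\smallsetminus A$ is open in the locally compact Hausdorff space $M$, and a partitioned configuration space of a locally compact Hausdorff space is again locally compact Hausdorff.

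Next I would show that $X' = C_{\bk}(\Lambda\smallsetminus A)$ is homeomorphic to a disjoint union of copies of $\bR^k$. Write $\Lambda\smallsetminus A = \bigsqcup_j I_j$ as a disjoint union of its connected components, each homeomorphic to $\bR$ and hence clopen in $\Lambda\smallsetminus A$. A partitioned configuration in $\bigsqcup_j I_j$ is the same datum as a matrix $(d_{ij})$ of non-negative integers with $\sum_j d_{ij} = k_i$ (recording how many points of block $i$ lie in $I_j$; this matrix is locally constant, since points cannot migrate between the $I_j$, and only finitely many of its entries are non-zero) together with, for each $j$, a partitioned configuration of type $(d_{1j},\ldots,d_{lj})$ in $I_j$. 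Since $I_j$ is one-dimensional, the linear order along $I_j$ of the points of such a configuration, and hence the induced assignment of block-labels to the ordered positions $x_1 < \cdots < x_{N_j}$ (where $N_j = \sum_i d_{ij}$), is itself locally constant; for each fixed assignment the configuration space is the open convex region $\{x_1 < \cdots < x_{N_j}\}\subseteq\bR^{N_j}$, which is homeomorphic to $\bR^{N_j}$. Taking products over $j$ and using $\sum_j N_j = k$ gives $C_{\bk}(\Lambda\smallsetminus A)\cong\bigsqcup\bR^k$, as required.

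The homological input is Lemma~\ref{lem:BM-homology-lemma}, applied with its ``$M$'', ``$B$'' and ``$A$'' taken to be our $M$, $\Lambda$ and $A$. Its hypotheses hold: $M$ is a compact metric space, $M$ and $\Lambda$ are locally compact, $A\subseteq\Lambda\subseteq M$ are closed, and the strong deformation retraction of $M$ onto $\Lambda$ supplied in the statement satisfies the two additional (non-expanding and self-embedding) conditions. Hence $C_{\bk}(\Lambda\smallsetminus A)\hookrightarrow C_{\bk}(M\smallsetminus A)$ induces isomorphisms on Borel--Moore homology in all degrees, for all local systems on $C_{\bk}(M\smallsetminus A)$ that extend over $C_{\bk}(M)$. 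To upgrade this to \emph{all} local systems on $X$, I would use that $A$ is contained in $\partial M$: the inclusion $M\smallsetminus A\hookrightarrow M$ then deletes only a closed subset of the boundary, and is a homotopy equivalence (from a collar of $\partial M$ one builds a deformation retraction of $M$ onto a compact submanifold-with-boundary contained in $M\smallsetminus A$); this in turn upgrades to a homotopy equivalence $C_{\bk}(M\smallsetminus A)\hookrightarrow C_{\bk}(M)$, by the standard insensitivity of configuration spaces to the removal of boundary collars. Since a homotopy equivalence induces, by pullback, an equivalence between the categories of local systems, every local system on $X$ is isomorphic to the restriction of one on $C_{\bk}(M)$, so the isomorphisms provided by Lemma~\ref{lem:BM-homology-lemma} hold for all of them. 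Combining this with the two facts established above completes the verification.

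The step I expect to be the main obstacle is the last one, passing from ``local systems that extend over $C_{\bk}(M)$'' to ``all local systems on $X$'': it rests on two statements -- that removing a closed subset of the boundary of a compact manifold-with-boundary is a homotopy equivalence, and that this persists after passing to configuration spaces -- which, though standard, should be given a precise proof or a reference. By contrast, the combinatorial identification of $C_{\bk}(\Lambda\smallsetminus A)$ with a disjoint union of Euclidean spaces is the part that genuinely exploits the hypothesis that $\Lambda\smallsetminus A$ is a disjoint union of open intervals, but is otherwise routine.
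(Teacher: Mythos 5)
Your proof takes essentially the same route as the paper's: take $X' = C_{\bk}(\Lambda\smallsetminus A)$, apply Lemma~\ref{lem:BM-homology-lemma}, observe that $A\subseteq\partial M$ makes $C_{\bk}(M\smallsetminus A)\hookrightarrow C_{\bk}(M)$ a homotopy equivalence so all local systems extend, identify $C_{\bk}$ of a disjoint union of open intervals with a disjoint union of copies of $\bR^k$, and note that $X$ is locally compact (the paper simply observes it is a manifold). The only small wrinkle is your parenthetical justification that $A$ is closed ``being closed in the closed subspace $\Lambda$'', which is circular as written -- closedness of $A$ is a hypothesis one needs in order to invoke Lemma~\ref{lem:BM-homology-lemma} at all, and should be stated as such rather than derived.
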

\begin{proof}
We may apply Theorem~\ref{thm:BM-homology-lemma} and take $X' = C_{\bk}(\Lambda \smallsetminus A)$. Since $A$ is contained in $\partial M$, the inclusion $C_{\bk}(M \smallsetminus A) \subseteq C_{\bk}(M)$ is a homotopy equivalence, so all local coefficient systems on $C_{\bk}(M \smallsetminus A)$ extend to $C_{\bk}(M)$. Moreover, any configuration space on a disjoint union of open intervals is homeomorphic to a disjoint union of copies of $\bR^k$, where $k$ is the total number of points in a configuration. Thus $X$ admits a subspace $X'$ with the required properties. Finally, $X = C_{\bk}(M \smallsetminus A)$ is locally compact since it is a manifold.
\end{proof}

\begin{outlook}
\label{outlook-for-examples}
To summarise, in order to construct weakly pro-nilpotent representations of $\Gamma$ it suffices to construct inputs \eqref{input:ses} and \eqref{input:fibration} -- in other words a split fibration sequence \eqref{eq:input-fibration-sequence} whose induced split short exact sequence of fundamental groups is \eqref{eq:input-ses} -- and to check that the inclusion $\varphi \colon K \hookrightarrow G$ in \eqref{eq:input-ses} is eNCP (this is Lemma~\ref{lem:general-recipe-weakly-pro-nilpotent}). If the space $X$ of \eqref{eq:input-fibration-sequence} is of the form $X = C_{\bk}(M \smallsetminus A)$ as in Corollary~\ref{coro:BM-homology-lemma}, then we obtain a \emph{genuine} pro-nilpotent representation of $\Gamma$ (by Corollaries~\ref{coro:general-recipe-pro-nilpotent} and \ref{coro:BM-homology-lemma}).

In \S\ref{s:Bn}--\S\ref{s:LBn} we apply this recipe in many different settings where $\Gamma$ is either a classical braid group, surface braid group or loop braid group. For classical and surface braid groups (\S\ref{s:Bn} and \S\ref{s:SBn}) we carry out the full recipe -- constructing \eqref{eq:input-fibration-sequence}, proving that $\varphi$ is eNCP and checking that $X$ has the form described in Corollary~\ref{coro:BM-homology-lemma} -- and thus obtain genuine pro-nilpotent representations of these groups. On the other hand, for loop braid groups (\S\ref{s:LBn}) the space $X$ in our construction is not of the right form to apply Corollary~\ref{coro:BM-homology-lemma}, so for these groups we only construct weakly pro-nilpotent representations. The reason is that, in this setting, $X$ is either a configuration space involving higher-dimensional objects than just points, or it is a configuration space of points in a manifold that does not deformation retract onto a graph as in Corollary~\ref{coro:BM-homology-lemma}.
\end{outlook}

\section{Classical braid groups}
\label{s:Bn}

In this section, we prove Theorems~\ref{athm:pro-nilpotent}, \ref{athm:ribbon-Lawrence} and \ref{athm:3-variable-LKB}: we construct the pro-nilpotent LKB representation in \S\ref{ss:Bn-pro-nilpotent}, the ribbon-Lawrence representations in \S\ref{ss:Bn-ribbon} and we compute explicit matrices for the second ribbon-Lawrence representation, which is also in a sense the ``limit'' of the pro-nilpotent LKB representation, in \S\ref{ss:Bn-matrices}.

\subsection{The pro-nilpotent LKB representation}
\label{ss:Bn-pro-nilpotent}

We apply the general construction of \S\ref{s:general-recipe} to prove Theorem \ref{athm:pro-nilpotent}.

\begin{proof}[Proof of Theorem \ref{athm:pro-nilpotent}]
Consider the split fibration sequence (cf.~\cite[Th.~3]{Fadell-Neuwirth})
\begin{equation}
\label{eq:input-fibration-sequence-Bn}
\begin{tikzcd}
C_{2}(\bD_{n}) \ar[r] & C_{2,n}(\bD^2) \ar[r] & C_{n}(\bD^2), \ar[l,bend right=25,dashed]
\end{tikzcd}
\end{equation}
where $\bD_{n}$ denotes the disc $\bD^2$ minus $n$ interior points and $C_{\bk}(\phantom{-})$ for a partition $\bk$ denotes the partitioned configuration space (defined just before Theorem~\ref{thm:BM-homology-lemma}). By Lemma~\ref{lem:general-recipe-weakly-pro-nilpotent}, this determines a weakly pro-nilpotent representation of $\B_{n} = \pi_{1}(C_{n}(\bD^2))$ as long as the inclusion $\varphi$ in the induced split short exact sequence of fundamental groups
\begin{equation}
\label{eq:input-ses-Bn}
\begin{tikzcd}
1 \ar[r] & \B_{2}(\bD_{n}) \ar[r,"\varphi"] & \B_{2,n} \ar[r] & \B_{n} \ar[l,bend right=25,dashed] \ar[r] & 1.
\end{tikzcd}
\end{equation}
is eNCP. By Corollary~\ref{coro:lifting-eNCP} (see also Remark~\ref{rmk:lifting-eNCP}), it will suffice to construct a quotient $\pi \colon \B_{2,n} \twoheadrightarrow G'$ fitting into the diagram
\begin{equation}
\label{eq:ses-with-quotient-Bn}
\begin{tikzcd}
1 \ar[r] & \B_{2}(\bD_{n}) \ar[r,"\varphi"] \ar[dr,two heads] & \B_{2,n} \ar[r] \ar[d,two heads,"\pi"] & \B_{n} \ar[l,bend right=25,dashed] \ar[r] \ar[dl,dashed,"0"] & 1 \\
&& G' &&
\end{tikzcd}
\end{equation}
where the lower central series of $G'$ does not stop. We will construct this for $G' = \bZ^2 \rtimes \mathfrak{S}_{2}$, whose lower central series does not stop by \cite[Prop.~A.28]{DPS}.

To construct the desired quotient $\B_{2,n} \twoheadrightarrow \bZ^2 \rtimes \mathfrak{S}_{2}$, we follow the proof of \cite[Prop.~3.12]{DPS}, where we set $l=2$. It is shown there that the quotient $\B_{2,n} / \LCS_{\infty}$ is isomorphic to $\bZ \times (\bZ^2 \rtimes \bZ)$, where a generator of the last $\bZ$ factor acts on the $\bZ^2$ factor by swapping the coordinates. This action has order two, so we may project the last $\bZ$ factor onto $\bZ/2 = \mathfrak{S}_{2}$, and also project away from the first (direct) $\bZ$ factor, to obtain a quotient onto $\bZ^2 \rtimes \mathfrak{S}_{2}$.

As shown in \cite[proof of Proposition~3.12]{DPS}, the generator $\sigma_{1}$ of $\B_{2,n}$ (a half-twist of the first two strands) is sent to the generator $s$ of $\mathfrak{S}_{2}$ in this quotient and the generator $a_{13}$ of $\B_{2,n}$ (a pure braid where all strands are vertical except the first one, which winds once around the third strand) is sent to the element $(1,0) \in \bZ^2 \subseteq \bZ^2 \rtimes \mathfrak{S}_{2}$ in this quotient. Since the two elements $s$ and $(1,0)$ generate $\bZ^2 \rtimes \mathfrak{S}_{2}$ and their pre-images $\sigma_{1}$ and $a_{13}$ lie in the subgroup $\B_{2}(\bD_{n})$, it follows that the restriction of the surjection $\B_{2,n} \twoheadrightarrow \bZ^2 \rtimes \mathfrak{S}_{2}$ to $\B_{2}(\bD_{n})$ is also surjective.

The quotient $\B_{2,n} \twoheadrightarrow \bZ^2 \rtimes \mathfrak{S}_{2}$ may be thought of as recording the winding numbers of the two configuration points in the first block of the partition around the $n$ configuration points in the second block (in the $\bZ^2$ factor) together with the induced permutation of the $2$-point block in the base configuration (in the $\mathfrak{S}_{2}$ factor). From this description it is clear that its composition with the section $\B_{n} \dashrightarrow \B_{2,n}$ is zero.

We have thus constructed the quotient $G' = \bZ^2 \rtimes \mathfrak{S}_{2}$ with the necessary properties, so $\varphi$ is eNCP by Corollary~\ref{coro:lifting-eNCP} and Lemma~\ref{lem:general-recipe-weakly-pro-nilpotent} implies that \eqref{eq:input-ses-Bn} induces a weakly pro-nilpotent representation of $\B_{n}$.

To see that this is a \emph{genuine} pro-nilpotent representation, we apply Corollary~\ref{coro:BM-homology-lemma}. Let $M$ be the $2$-disc with the interiors of $n$ pairwise disjoint closed discs removed, let $A$ be the union of its $n$ inner boundary circles and let $\Lambda$ be the embedded finite graph in $M$ given by the union of $A$ with $n-1$ arcs passing between consecutive boundary circles (see Figure~\ref{fig:basis}). It is easy to construct an appropriate deformation retraction of $M$ onto $\Lambda$, so Corollary~\ref{coro:BM-homology-lemma} implies that the configuration space $X = C_{2}(\bD_{n}) \cong C_{2}(M \smallsetminus A)$ satisfies the hypotheses of Corollary~\ref{coro:general-recipe-pro-nilpotent}, which implies that the weakly pro-nilpotent representation of $\B_{n}$ that we have constructed is a (genuine) pro-nilpotent representation.

Finally, we consider the bottom $(r=2)$ level of this pro-nilpotent representation. This is constructed from
\begin{equation}
\label{eq:input-ses-Bn-level-2}
\begin{tikzcd}
1 \ar[r] & \B_{2}(\bD_{n}) \ar[d,two heads] \ar[r] & \B_{2,n} \ar[d,two heads] \ar[r] & \B_{n} \ar[d,two heads] \ar[l,bend right=25,dashed] \ar[r] & 1 \\
1 \ar[r] & Q_{2} \ar[r] & \B_{2,n}^{\ab} \ar[r] & \B_{n}^{\ab} \ar[l,bend right=25,dashed] \ar[r] & 1.
\end{tikzcd}
\end{equation}
For $n\geq 2$, we have $\B_{2,n}^{\ab} \cong \bZ^3$ (see for example \cite[Prop.~3.5]{DPS}) and $\B_{n}^{\ab} \cong \bZ$, so $Q_{2} \cong \bZ^2$. Moreover, the quotient $\B_{2}(\bD_{n}) \twoheadrightarrow Q_{2} \cong \bZ^2$ precisely corresponds to the local system $\cL_{2}$ from the definition of the LKB representation \eqref{eq:LKB}; thus the $r=2$ level of our pro-nilpotent representation of $\B_{n}$ is the LKB representation. This concludes the proof of Theorem \ref{athm:pro-nilpotent}.
\end{proof}

\subsection{Ribbon Lawrence representations}
\label{ss:Bn-ribbon}

Let us consider in more detail the top two rows of diagram \eqref{eq:big-diagram-of-quotients} in the setting of \S\ref{ss:Bn-pro-nilpotent}:
\begin{equation}
\label{eq:input-ses-Bn-level-infinity}
\begin{tikzcd}
1 \ar[r] & \B_{2}(\bD_{n}) \ar[d,two heads] \ar[r] & \B_{2,n} \ar[d,two heads] \ar[r] & \B_{n} \ar[d,two heads] \ar[l,bend right=25,dashed] \ar[r] & 1 \\
1 \ar[r] & Q_{\infty} \ar[r] & \B_{2,n}/\LCS_{\infty} \ar[r] & \B_{n}/\LCS_{\infty} \ar[l,bend right=25,dashed] \ar[r] & 1.
\end{tikzcd}
\end{equation}
For $n\geq 2$, we have $\LCS_{2}(\B_{n}) = \LCS_{\infty}(\B_{n})$ (by \cite{GorinLin1969} for $n\geq 5$ and \cite[Example~2.3]{DPS} for $n\geq 2$) and so $\B_{n}/\LCS_{\infty} = \B_{n}^{\ab} \cong \bZ$. For $n\geq 3$, we also have $\B_{2,n}/\LCS_{\infty} \cong \bZ \times (\bZ^2 \rtimes \bZ)$, where, as described above, the right-hand $\bZ$ factor acts on the $\bZ^2$ factor by powers of the involution interchanging the two coordinates \cite[Prop.~3.12]{DPS}. Moreover, from the proof of \cite[Prop.~3.12]{DPS}, one sees that, under these identifications, the projection $\B_{2,n}/\LCS_{\infty} \twoheadrightarrow \B_{n}/\LCS_{\infty}$ is the projection onto the left-hand $\bZ$ factor. From this calculation we may conclude the following.
\begin{itemizeb}
\item We have $Q_{\infty} \cong \bZ^2 \rtimes \bZ$, where $1 \in \bZ$ acts on $\bZ^2$ by swapping the two coordinates.
\item The induced $\B_{n}$-action on $Q_{\infty}$ is trivial, since $\B_{2,n}/\LCS_{\infty}$ is the direct product of $Q_{\infty}$ and $\B_{n}/\LCS_{\infty}$. Hence we have $Q^{\mathrm{u}}_{\infty} = Q_{\infty}$.
\item For each finite $r \geq 2$, the homomorphism $Q_{\infty} \to Q_{r}$ is surjective (cf.\ the paragraph just before Lemma~\ref{lem:general-recipe-weakly-pro-nilpotent}) and $\B_{n}$-equivariant (because the $\B_{n}$-actions on $Q_{\infty}$ and on $Q_{r}$ are both induced by the commutative diagram \eqref{eq:big-diagram-of-quotients}), so it follows from the previous point that the induced $\B_{n}$-action on $Q_{r}$ is also trivial, and so we have $Q^{\mathrm{u}}_{r} = Q_{r}$.
\end{itemizeb}
This establishes the $k=2$, $n\geq 3$ case of Theorem \ref{athm:ribbon-Lawrence}. We next prove Theorem \ref{athm:ribbon-Lawrence} for all $k,n\geq 2$.

\begin{proof}[Proof of Theorem \ref{athm:ribbon-Lawrence}]
We first observe that, for any inclusion of surfaces $e \colon S \hookrightarrow T$, there is a homomorphism
\begin{equation}
\label{eq:inclusion-induced-map}
\B_{k}(S) \longrightarrow \pi_{1}(S) \wr \B_{k}(T) := \pi_{1}(S)^k \rtimes \B_{k}(T),
\end{equation}
defined as follows. Choose an ordering $\vec{c} = (c_{1},\ldots,c_{k})$ of the base configuration $c$ of $C_{k}(S)$ and an embedded disc $\iota \colon \bD^2 \hookrightarrow S$ whose image contains $c$. Given a loop $\beta \in \B_{k}(S) = \pi_{1}(C_{k}(S),c)$, we may lift it uniquely to a path $(\gamma_{1},\ldots,\gamma_{k})$ in the ordered configuration space $F_{k}(S)$ starting at $\vec{c}$. Each path $\gamma_{i}$ in $S$ begins and ends in the image of $\iota$, so collapsing $\iota(\bD^2)$ to a point and identifying the resulting surface $S/\iota(\bD^2)$ with $S$, we obtain a collection of loops $\bar{\gamma}_{i}$ in $\pi_{1}(S)$. The homomorphism \eqref{eq:inclusion-induced-map} is then defined to send $\beta$ to $((\bar{\gamma}_{1},\ldots,\bar{\gamma}_{k}),e \circ \beta)$.

Applying \eqref{eq:inclusion-induced-map} in the case $(T,S) = (\bD^2,\bD_{n})$, together with the projection $\pi_{1}(\bD_{n}) \cong \F_{n} \twoheadrightarrow \bZ$ sending each generator of a free basis for $\pi_{1}(\bD_{n})$ to $1 \in \bZ$, we obtain a quotient
\begin{equation}
\label{eq:ribbon-quotient}
\B_{k}(\bD_{n}) \relbar\joinrel\twoheadrightarrow \bZ \wr \B_{k} = \RB_{k}
\end{equation}
onto the $k$-th ribbon braid group. Explicitly, a braid $\beta = (\gamma_{1},\ldots,\gamma_{k}) \in \B_{k}(\bD_{n})$ is sent to the element $((w_{1},\ldots,w_{k}),\beta) \in \bZ^k \rtimes \B_{k} = \bZ \wr \B_{k}$, where $\gamma_{i}$ is the strand of $\beta$ starting at $c_{i}$ and $w_{i}$ is the total winding number of the loop $\bar{\gamma}_{i}$ in $\bD_{n} / \iota(\bD^2) \cong \bD_{n}$. Notice that, in the case where $k=2$, we have $\RB_{2} = \bZ \wr \bZ = \bZ^2 \rtimes \bZ$ and in addition the quotient \eqref{eq:ribbon-quotient} coincides with the quotient onto $Q_{\infty} = \bZ^2 \rtimes \bZ$ from diagram \eqref{eq:input-ses-Bn-level-infinity} if $n\geq 3$ (this follows from the explicit description of the quotient $\B_{2,n} \twoheadrightarrow \B_{2,n}/\LCS_{\infty}$ from the proof of \cite[Prop.~3.12]{DPS}).

The homomorphism \eqref{eq:inclusion-induced-map} is equivariant with respect to the evident action of the group of diffeomorphisms of $T$ that send $S$ onto itself. In the case $(T,S) = (\bD^2,\bD_{n})$ this means that the homomorphism
\begin{equation}
\label{eq:inclusion-induced-map-punctured-disc}
\B_{k}(\bD_{n}) \longrightarrow \pi_{1}(\bD_{n}) \wr \B_{k}
\end{equation}
is $\B_{n}$-equivariant, where the $\B_{n}$-action on $\B_{k}$ is trivial and its action on $\pi_{1}(\bD_{n}) \cong \F_{n}$ is the Artin representation. The projection $p \colon \F_{n} \twoheadrightarrow \bZ$ is clearly $\B_{n}$-invariant, hence so is $(p \wr \id) \circ \eqref{eq:inclusion-induced-map-punctured-disc} = \eqref{eq:ribbon-quotient}$.

Since the quotient \eqref{eq:ribbon-quotient} of $\pi_{1}(C_{k}(\bD_{n}))$ is $\B_{n}$-invariant, we may apply twisted Borel-Moore homology to obtain an induced $\B_{n}$-action on the $\bZ[\RB_{k}]$-module
\[
V_{\R}(k) := H_{k}^{\BM}(C_{k}(\bD_{n}) ; \cL_{\RB_{k}}),
\]
where $\cL_{\RB_{k}}$ is the local system on $C_{k}(\bD_{n})$ corresponding to \eqref{eq:ribbon-quotient}. This is the desired representation $\mathscr{RL}_{k}$ of Theorem \ref{athm:ribbon-Lawrence}. The $k$-th Lawrence representation \eqref{eq:Lawrence} is
\[
V_{k} = H_{k}^{\BM}(C_{k}(\bD_{n}) ; \cL_{k}),
\]
where $\cL_{k}$ is the local system on $C_{k}(\bD_{n})$ corresponding to the quotient $\B_{k}(\bD_{n}) \twoheadrightarrow \bZ^2$ sending a braid $\beta = (\gamma_{1},\ldots,\gamma_{k}) \in \B_{k}(\bD_{n})$ to $(w,b)$, where $w$ is the total winding number of all strands of the braid $\beta$ around the punctures of $\bD_{n}$ and $b \in \B_{k}^{\ab} = \bZ$ is the abelianisation of $\beta$ viewed as a braid in $\bD^2$. Notice that, in terms of the explicit description of \eqref{eq:ribbon-quotient} above, we have $w = w_{1} + \cdots + w_{k}$. Hence this quotient factors as
\begin{equation}
\label{eq:ribbon-quotient-factorisation}
\begin{tikzcd}
\B_{k}(\bD_{n}) \ar[r,two heads,"{\eqref{eq:ribbon-quotient}}"] & \RB_{k} = \bZ \wr \B_{k} \ar[r,two heads,"(*)"] & (\bZ \wr \B_{k})^{\ab} = \bZ \times \B_{k}^{\ab} = \bZ^2 .
\end{tikzcd}
\end{equation}
If we use the projection $(*)$ from \eqref{eq:ribbon-quotient-factorisation} to view $\bZ[\bZ^2]$ as a $\bZ[\RB_{k}]$-module, we have an isomorphism $\cL_{2} \cong \cL_{\RB_{k}} \otimes_{\bZ[\RB_{k}]} \bZ[\bZ^2]$, and hence:
\begin{align*}
V_{\R}(k) \otimes_{\bZ[\RB_{k}]} \bZ[\bZ^2] &= H_{k}^{\BM}(C_{k}(\bD_{n}) ; \cL_{\RB_{k}}) \otimes_{\bZ[\RB_{k}]} \bZ[\bZ^2] \\
&\cong H_{k}^{\BM}(C_{k}(\bD_{n}) ; \cL_{\RB_{k}} \otimes_{\bZ[\RB_{k}]} \bZ[\bZ^2]) \\
&\cong H_{k}^{\BM}(C_{k}(\bD_{n}) ; \cL_{2}) = V_{k} = \eqref{eq:Lawrence},
\end{align*}
where the middle isomorphism follows from the fact that $H_{k}^{\BM}(C_{k}(\bD_{n}) ; \cL)$, for \emph{any} rank-$1$ local system $\cL$ over a ring $R$, is a free $R$-module; this latter fact follows from Corollary~\ref{coro:BM-homology-lemma} exactly as in the proof of Theorem \ref{athm:pro-nilpotent} above. Thus we have shown that $V_{\R}(k)$ recovers \eqref{eq:Lawrence} after reducing along the abelianisation of $\RB_{k}$. This concludes the proof of Theorem \ref{athm:ribbon-Lawrence} except for its last statement, which was already explained in the paragraph before this proof.
\end{proof}

\begin{rmk}
The definition of the homomorphism \eqref{eq:inclusion-induced-map} is inspired by the definition of the homomorphism $\pi_S$ of \cite[\S 6.2.1]{DPS}. Indeed, the latter is the composition of \eqref{eq:inclusion-induced-map} for $S=T$ with the canonical quotient of $\pi_{1}(S) \wr \B_{k}(S)$ onto $\pi_{1}(S) \wr \mathfrak{S}_{k}$.
\end{rmk}

\begin{rmk}
We note that the group $\RB_{2} \cong \bZ^2 \rtimes \bZ$ is residually nilpotent but not nilpotent, so its lower central series does not stop (cf.~\cite[Prop.~A.10]{DPS}), which is a necessary condition for inducing a pro-nilpotent representation. On the other hand, for $k\geq 3$, the lower central series of $\RB_{k}$ stops at $\LCS_{2}$, so $\mathscr{RL}_{k}$ does not induce a pro-nilpotent representation in this case.
\end{rmk}

\subsection{Formulas for the three-variable LKB representation}
\label{ss:Bn-matrices}

We now consider in more detail the representation $\mathscr{RL}_{2}$ of $\B_{n}$ on the $\bZ[\RB_{2}]$-module $V_\R(2)$. Recall from the introduction that we write:
\[
\Theta = \bZ[\RB_{2}] = \bZ[\bZ^2 \rtimes \bZ] = \bZ\langle q_{1}^{\pm 1},q_{2}^{\pm 1},t^{\pm 1} \rangle / (q_{1} q_{2} = q_{2} q_{1} , q_{1} t = t q_{2} , q_{2} t = t q_{1}).
\]
As a $\Theta$-module, $V_\R(2)$ is free of rank $\binom{n}{2}$ (we will recall an explicit basis below), so each generator $\sigma_{i}$ of $\B_{n}$ acts by an $\binom{n}{2} \times \binom{n}{2}$ matrix over $\Theta$. Our goal in the remainder of this section is to prove Theorem \ref{athm:3-variable-LKB}, which states that this matrix is the one described in Table~\ref{tab:LKB3}.

\paragraph*{Basis.}
We first describe a basis of $V_{\R}(k)$ over $\Theta$ for all $k\geq 1$ (we will later come back to the special case $k=2$). As in the proof of Theorem \ref{athm:pro-nilpotent} above, let $M$ be the $2$-disc with the interiors of $n$ pairwise disjoint closed discs removed, let $A$ be the union of its $n$ inner boundary circles and let $\Lambda$ be the embedded finite graph in $M$ given by the union of $A$ with $n-1$ arcs passing between consecutive boundary circles; see Figure~\ref{fig:basis}.

\begin{figure}[htb]
    \centering
    \includegraphics[scale=0.8]{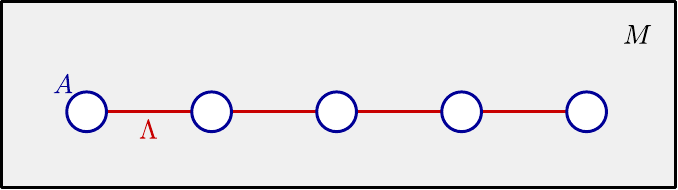}
    \caption{The connected, compact, planar surface $M$ with $n+1$ boundary components. The subspace $A$ is the union of its $n$ inner boundary components; the embedded graph $\Lambda$ is the union of $A$ with the $n-1$ arcs depicted.}
    \label{fig:basis}
\end{figure}

There is then a strong deformation retraction of $M$ onto $B = \Lambda$ satisfying the conditions of Theorem~\ref{thm:BM-homology-lemma}, so that theorem implies that the inclusion $I_{n} := \Lambda \smallsetminus A \hookrightarrow M \smallsetminus A \cong \bD_{n}$ induces an isomorphism
\[
H_{k}^{\BM}(C_{k}(I_{n}) ; \cL_{\RB_{k}}) \;\cong\; H_{k}^{\BM}(C_{k}(\bD_{n}) ; \cL_{\RB_{k}}) = V_{\R}(k).
\]
Since $I_{n}$ is a disjoint union of open $n-1$ intervals, the configuration space $C_{k}(I_{n})$ is a disjoint union of open $k$-balls indexed by tuples $(k_{1},\ldots,k_{n-1})$ of non-negative integers summing to $k$. The path-component corresponding to the tuple $(k_{1},\ldots,k_{n-1})$ naturally identifies with the product of open simplices $\mathring{\Delta}^{k_{1}} \times \cdots \times \mathring{\Delta}^{k_{n-1}}$, which is homeomorphic to an open $k$-ball. In particular, each path-component is simply-connected, so the restriction of the local system $\cL_{\RB_{k}}$ to $C_{k}(I_{n})$ is trivial. We deduce:

\begin{prop}
\label{prop:explicit-basis}
As a module, the representation $V_{\R}(k) \cong H_{k}^{\BM}(C_{k}(I_{n});\cL_{\RB_{k}})$ has a free basis over $\bZ[\RB_{k}]$ indexed by tuples $(k_{1},\ldots,k_{n-1})$ of non-negative integers summing to $k$. A generator corresponding to this tuple is given by the fundamental class of the properly-embedded submanifold $L(k_{1},\ldots,k_{n-1})$ of $C_{k}(\bD_{n})$ equal to the subspace of all configurations where exactly $k_{i}$ points lie on the $i$-th open interval of $I_{n} = \Lambda \smallsetminus A$.
\end{prop}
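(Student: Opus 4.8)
The plan is to read off $H_k^{BM}(C_k(I_n);\cL_{\RB_k})$ directly from the decomposition of $C_k(I_n)$ recalled in the paragraphs above the statement, and then transport the resulting basis across the isomorphism $H_k^{BM}(C_k(I_n);\cL_{\RB_k})\cong V_\R(k)$ supplied by Lemma~\ref{lem:BM-homology-lemma}.

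First I would use the fact, already established above, that $C_k(I_n)$ is a \emph{finite} disjoint union of its path-components $L(k_1,\ldots,k_{n-1})$, indexed by $(n-1)$-tuples of non-negative integers summing to $k$, each homeomorphic to $\mathring{\Delta}^{k_1}\times\cdots\times\mathring{\Delta}^{k_{n-1}}\cong\bR^k$, and that $\cL_{\RB_k}$ restricts to a trivial rank-one local system on each such component. Choosing, for each component, a trivialisation of $\cL_{\RB_k}$ (equivalently, a lift of a basepoint to the regular $\RB_k$-cover) and an orientation of $\bR^k$ singles out a preferred generator, the fundamental class $[L(k_1,\ldots,k_{n-1})]$. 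Since the disjoint union is finite, Borel-Moore homology splits as the direct sum over components; and $H_i^{BM}(\bR^k;\underline{\bZ[\RB_k]})$ vanishes for $i\neq k$ and is free of rank one on the fundamental class for $i=k$. Hence
\[
H_k^{BM}(C_k(I_n);\cL_{\RB_k})\;=\;\bigoplus_{(k_1,\ldots,k_{n-1})}\bZ[\RB_k]\cdot[L(k_1,\ldots,k_{n-1})]
\]
is a free $\bZ[\RB_k]$-module with basis indexed by the $\binom{n+k-2}{k}$ tuples in question (a count matching the known rank of the Lawrence representation $V(k)$).

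Next I would transport this basis along the isomorphism of Lemma~\ref{lem:BM-homology-lemma}. This isomorphism is $\bZ[\RB_k]$-linear, since it is induced by a map of spaces functorially in the coefficient data and thus respects the right-regular module structure, and it is the proper pushforward along the inclusion $C_k(I_n)\hookrightarrow C_k(\bD_n)$. Consequently it sends $[L(k_1,\ldots,k_{n-1})]$ to the pushforward of the fundamental class of $L(k_1,\ldots,k_{n-1})$ along the embedding $L(k_1,\ldots,k_{n-1})\hookrightarrow C_k(\bD_n)$, which is exactly ``the fundamental class of the properly-embedded submanifold $L(k_1,\ldots,k_{n-1})$ of $C_k(\bD_n)$'' of the statement. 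To justify the word \emph{properly} I would check that this embedding is a closed embedding: each interval $I_i$ is closed in $\bD_n$, because its two endpoints lie on the removed inner boundary circles $A$, so any limit in $C_k(\bD_n)$ of configurations with $k_i$ points on $I_i$ again has $k_i$ points on $I_i$; thus $L(k_1,\ldots,k_{n-1})$ is a closed submanifold of $C_k(\bD_n)$, hence properly embedded and carrying a Borel-Moore fundamental class.

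I do not expect a genuine obstacle here; the proposition is essentially a bookkeeping consequence of the decomposition of $C_k(I_n)$ together with the comparison isomorphism already proved. The only two points requiring any care are (i) verifying, as above, that each $L(k_1,\ldots,k_{n-1})$ is closed, hence properly embedded, in $C_k(\bD_n)$, so that ``fundamental class'' is meaningful there; and (ii) being explicit that a ``generator'' is canonical only up to sign (choice of orientation) and up to the $\RB_k$-action (choice of trivialisation over a simply-connected component), with any such choice producing an equally valid free basis.
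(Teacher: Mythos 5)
Your argument is correct and follows the same route the paper uses — the paper presents the proposition as an immediate deduction ("We deduce:") from exactly the facts you marshal: $C_k(I_n)$ is a finite disjoint union of open $k$-balls, the local system restricts trivially to each component, and Lemma~\ref{lem:BM-homology-lemma} supplies the comparison isomorphism. You supply a bit more detail than the paper does on the properness of the embedding and on the fact that the generators are only canonical up to units, but the latter is addressed in the remark immediately following the proposition, so your reasoning aligns closely with the intended proof.
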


\paragraph*{Intersection form.}
We now note that the subspace $C_{k}^\partial(\bD_{n}) \subset C_{k}(\bD_{n})$ consisting of configurations that intersect the boundary $\partial \bD_{n} = \partial \bD^2$ non-trivially is preserved under the $\B_{n}$-action. This is in fact precisely the boundary of the manifold $C_{k}(\bD_{n})$. Thus, instead of applying twisted Borel-Moore homology $H_{k}^{\BM}$ to the space $C_{k}(\bD_{n})$ equipped with the ($\B_{n}$-invariant) local system $\cL_{\RB_{k}}$, we may alternatively apply ordinary (relative) homology to the pair of spaces $(C_{k}(\bD_{n}) , C_{k}^\partial(\bD_{n}))$ equipped with the same local system, to obtain another $\B_{n}$-representation
\[
V_{\R}(k)^\partial := H_{k}(C_{k}(\bD_{n}) , C_{k}^\partial(\bD_{n}) ; \cL_{\RB_{k}})
\]
over $\bZ[\RB_{k}]$. For each tuple $(k_{1},\ldots,k_{n-1})$ of non-negative integers summing to $k$, consider the submanifold $L(k_{1},\ldots,k_{n-1})^\partial$ of $C_{k}(\bD_{n})$ consisting of configurations where exactly one point lies on each vertical closed interval from Figure~\ref{fig:basis2}, where there are exactly $k_{i}$ such intervals between the $i$-th and $(i+1)$-st inner boundaries of $M$. This is a compact, contractible submanifold (homeomorphic to the $k$-cube $[0,1]^k$) whose boundary lies in $C_{k}^\partial(\bD_{n})$, so it has a fundamental class in $V_{\R}(k)^\partial$. These classes are ``dual'' to the basis of Proposition~\ref{prop:explicit-basis} with respect to the following bilinear form.

\begin{defn}
The intersection form
\begin{equation}
\label{eq:intersection-form}
\langle - \mathbin{,} - \rangle \colon V_{\R}(k) \otimes V_{\R}(k)^\partial \longrightarrow \bZ[\RB_{k}]
\end{equation}
is defined by $\langle x \mathbin{,} y \rangle = x^\vee \cap y$, where $x^\vee \in H^k(C_{k}(\bD_{n}) , C_{k}^\partial(\bD_{n}) ; \cL_{\RB_{k}})$ is the Poincar{\'e} dual of $x$ and $\cap$ is the relative cap product, taking values in $H_0(C_{k}(\bD_{n}) ; \cL_{\RB_{k}}) \cong \bZ[\RB_{k}]$.
\end{defn}

For the fundamental classes described above, we have
\begin{equation}
\label{eq:intersection-form-on-bases}
\bigl\langle [L(k_{1},\ldots,k_{n-1})] \mathbin{,} [L(k'_{1},\ldots,k'_{n-1})^\partial] \bigr\rangle = \begin{cases}
1 & \text{if } (k_{1},\ldots,k_{n-1}) = (k'_{1},\ldots,k'_{n-1}) \\
0 & \text{otherwise}.
\end{cases}
\end{equation}
As a consequence, the elements $[L(k_{1},\ldots,k_{n-1})^\partial] \in V_{\R}(k)^\partial$ are linearly independent over $\bZ[\RB_{k}]$ and if we decompose an element $x \in V_{\R}(k)$ as a linear combination in the basis $[L(k_{1},\ldots,k_{n-1})]$, the coefficients of this decomposition are $\langle x \mathbin{,} [L(k_{1},\ldots,k_{n-1})^\partial] \rangle$.

\begin{figure}[htb]
    \centering
    \includegraphics[scale=0.8]{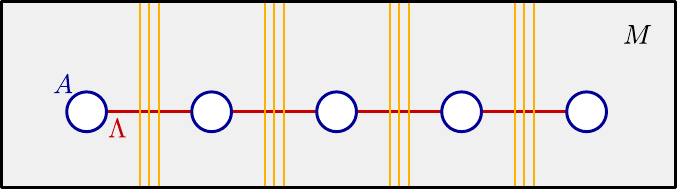}
    \caption{Vertical closed intervals determining an element of 
    $V_{\R}(k)^\partial$. In this case, we have $n=5$, $k=12$ and the element corresponds to the tuple $(3,3,3,3)$.}
    \label{fig:basis2}
\end{figure}

\begin{rmk}
We have elided a small subtlety above, namely that the fundamental class of a submanifold determines a homology class only up to the action of a unit of the ground ring, in this case $\bZ[\RB_{k}]^\times = \{\pm 1\} \times \RB_{k}$. To resolve the $\{\pm 1\}$ ambiguity we may choose an orientation of the submanifold and to resolve the $\RB_{k}$ ambiguity we may choose a path from a point on the submanifold to the basepoint of $C_{k}(\bD_{n})$ (two choices of such a path differ by an element of $\pi_{1}(C_{k}(\bD_{n}))$, which projects to an element of $\RB_{k}$). We assume that we have made such choices above so that \eqref{eq:intersection-form-on-bases} holds as written; without fixing these choices we can only say that \eqref{eq:intersection-form-on-bases} is equal to a unit when $(k_{1},\ldots,k_{n-1}) = (k'_{1},\ldots,k'_{n-1})$, not necessarily $1$. We will now make these choices explicit in the case $k=2$.
\end{rmk}

\paragraph*{Explicit orientations and paths to the basepoint.}
We now specialise to the case $k=2$. In this case, the homology elements under consideration are fundamental classes of embedded surfaces in the $4$-manifold $C_{2}(\bD_{n})$, and come in two kinds: those with $k_{i} = 2$ for some $i$ and those with $k_{i} = k_{j} = 1$ for some $i\neq j$ (and all other $k_{l} = 0$). In this case, we will make explicit choices, for each embedded surface in $C_{2}(\bD_{n})$, of an orientation and a path to the basepoint. The chosen paths to the basepoint are illustrated in Figure~\ref{fig:tethers}. The orientations are determined by chosen orientations of the arcs (also illustrated in Figure~\ref{fig:tethers}), together with the paths to the basepoint, as prescribed by the following convention.

\begin{convention}
\label{orientation-convention}
Let $p = (p_{1},p_{2}) \in C_{2}(\bD_{n})$ be the endpoint of the path to the basepoint that lies on the embedded surface in question. We order the two points $p_{1},p_{2}$ of this configuration so that, after following the path to the basepoint (which is a configuration in the bottom edge of the rectangle), the point $p_{1}$ ends up being to the left of the point $p_{2}$. It is enough to specify a local orientation of the embedded surface at the point $p$, in other words an ordered pair of linearly independent tangent vectors to the embedded surface at this point. The points $p_{1}$ and $p_{2}$ each lie on a smooth arc (possibly the same arc, possibly different arcs) with a chosen orientation as illustrated in Figure~\ref{fig:tethers}; this determines a non-zero tangent vector $v_{i}$ at $p_{i}$ in $\bD_{n}$ for $i=1,2$. A tangent vector at $p$ in $C_{2}(\bD_{n})$ is a choice of tangent vectors in $\bD_{n}$ at each of $p_{1}$ and $p_{2}$; for example we have $(v_{1},0)$ and $(0,v_{2})$, which are both tangent to the embedded surface. The local orientation at $p$ of the embedded surface is then the ordered pair $((v_{1},0),(0,v_{2}))$. Notice that this convention for choosing an orientation of the embedded surface depends not only on the orientations of the arcs involved, but also critically on the path to the basepoint: if we introduce a half-twist to this path, the orientation will be reversed.
\end{convention}

\begin{figure}[htb]
    \centering
    \includegraphics[scale=0.8]{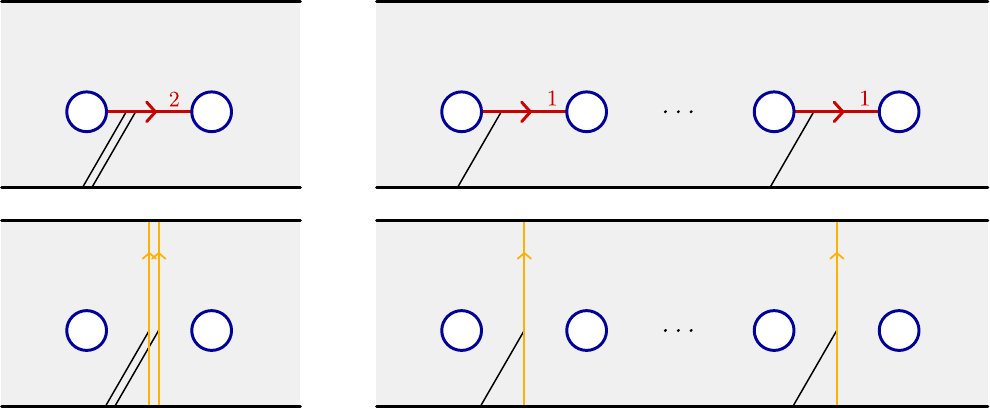}
    \caption{Chosen paths to the basepoint of $C_{2}(\bD_{n})$ for the embedded surfaces $L(k_{1},\ldots,k_{n-1})$ (top row) and $L(k_{1},\ldots,k_{n-1})^\partial$ (bottom row) for a tuple $(k_{1},\ldots,k_{n-1})$ of non-negative integers summing to $2$. On the left is the case when $k_{i} = 2$ for some $i$ and on the right is the case when $k_{i} = k_{j} = 1$ for some $i\geq j$. We consider any $2$-point configuration contained in the bottom edge of the rectangle to be the basepoint: this ambiguity does not matter since this subspace of $C_{2}(\bD_{n})$ is contractible.}
    \label{fig:tethers}
\end{figure}

\paragraph*{General w-classes and v-classes.}
We will need to consider homology elements of a slightly more general form. Let $\alpha$ be an arc in $M$ whose endpoints lie on $A$ and whose interior $\mathring{\alpha}$ lies in $M \smallsetminus A = \bD_{n}$. The fundamental class $[C_{2}(\mathring{\alpha})] \in H_{2}^{\BM}(C_{2}(\bD_{n});\cL_{\RB_{2}})$ is then well-defined up to a unit; it becomes well-defined on the nose, by Convention~\ref{orientation-convention}, after choosing an orientation of the arc $\alpha$ and a path from some point of $C_{2}(\mathring{\alpha})$ to the basepoint of $C_{2}(\bD_{n})$. Similarly, let $\alpha_{1}$ and $\alpha_{2}$ be two disjoint arcs of this form and consider the subsurface $(\mathring{\alpha}_{1} \times \mathring{\alpha}_{2})/\mathfrak{S}_{2} \subset C_{2}(\bD_{n})$ consisting of configurations with one point in the interior of each arc. Choosing orientations of $\alpha_{1}$ and $\alpha_{2}$ and a path from some point of $(\mathring{\alpha}_{1} \times \mathring{\alpha}_{2})/\mathfrak{S}_{2}$ to the basepoint of $C_{2}(\bD_{n})$, we have a well-defined fundamental class $[(\mathring{\alpha}_{1} \times \mathring{\alpha}_{2})/\mathfrak{S}_{2}] \in H_{2}^{\BM}(C_{2}(\bD_{n}) ; \cL_{\RB_{2}})$. We denote these elements (\emph{w-classes} and \emph{v-classes} respectively) by:
\begin{align*}
w(\alpha) &= [C_{2}(\mathring{\alpha})] \\
v(\alpha_{1},\alpha_{2}) &= [(\mathring{\alpha}_{1} \times \mathring{\alpha}_{2})/\mathfrak{S}_{2}],
\end{align*}
where it is implicit that the arcs are oriented and we have chosen an appropriate path to the basepoint. Finally, we may perform the same construction to arcs $\alpha_{1}$ and $\alpha_{2}$ that, instead of having their endpoints on $A \subset M$, have their endpoints on $\partial \bD^2 \subset M$ (and lie entirely in $M \smallsetminus A = \bD_{n}$); in this case we obtain an element $v(\alpha_{1},\alpha_{2}) \in H_{2}(C_{2}(\bD_{n}),C_{2}^\partial(\bD_{n});\cL_{\RB_{2}})$. Clearly all of the elements $[L(k_{1},\ldots,k_{n-1})]$ and $[L(k_{1},\ldots,k_{n-1})^\partial]$ considered above are of this form (in Figure~\ref{fig:tethers}, the top-left element is of the form $w(\alpha)$ and the others are all of the form $v(\alpha_{1},\alpha_{2})$).

\paragraph*{The intersection form on w-classes and v-classes.}
There is an explicit description of the intersection form \eqref{eq:intersection-form} for homology classes of this kind. Let $x$ be either $w(\alpha)$ or $v(\alpha_{1},\alpha_{2})$ and let $y$ be $v(\alpha'_{1},\alpha'_{2})$ for appropriate oriented arcs. Denote by $\gamma_x,\gamma_y$ the chosen paths to the basepoint for $x,y$ respectively. Assume that $\alpha$ or $\alpha_{1} \sqcup \alpha_{2}$ intersects $\alpha'_{1} \sqcup \alpha'_{2}$ transversely. For each intersection point $p \in x \cap y$, define a loop $\ell_p$ in $C_{2}(\bD_{n})$ by following $\gamma_x$ from the basepoint to the subsurface $x$, then following a path in $x$ to the intersection point $p$, then following a path in $y$ to the endpoint of $\gamma_y$ and then following $\gamma_y$ back to the basepoint. This loop induces a permutation of the base configuration of $C_{2}(\bD_{n})$; denote the sign of this permutation by $\mathrm{sgn}(\ell_p)$. It also determines an element $\phi(\ell_p) \in \RB_{2}$ via the projection $\phi \colon \pi_{1}(C_{2}(\bD_{n})) \twoheadrightarrow \RB_{2}$. Finally, write $p = \{p_{1},p_{2}\}$ and denote by $\mathrm{sgn}(p_{i})$ the sign of the intersection of the oriented arcs at $p_{i} \in \bD_{n}$. Then we have:
\begin{equation}
\label{eq:intersection-form-formula}
\langle x \mathbin{,} y \rangle = \sum_{p \in x \cap y} \mathrm{sgn}(p_{1})\mathrm{sgn}(p_{2})\mathrm{sgn}(\ell_p)\phi(\ell_p) \in \bZ[\RB_{2}] = \Theta.
\end{equation}
See \cite[page 475, ten lines above Lemma 2.1]{bigelow2001braid} and \cite[Appendix~B]{BlanchetPalmerShaukat} for an explanation of the signs appearing in this formula.

\paragraph*{Calculation of the matrices.}
With this setup, and especially the explicit formula \eqref{eq:intersection-form-formula} for the intersection form, we may now begin the proof of Theorem \ref{athm:3-variable-LKB}.

\begin{proof}[Proof of Theorem \ref{athm:3-variable-LKB}]
Let $1\leq i\leq n-1$ and let $\sigma_{i}$ be a diffeomorphism of $\bD_{n}$ representing $\sigma_{i} \in \B_{n}$. Using the basis $[L(k_{1},\ldots,k_{n-1})]$ of $V_\R(2)$, it follows from the discussion above that the entry of the matrix for $\mathscr{RL}_{2}(\sigma_{i})$ in the column corresponding to $(k_{1},\ldots,k_{n-1})$ and the row corresponding to $(k'_{1},\ldots,k'_{n-1})$ is
\[
\bigl\langle [\sigma_{i}(L(k_{1},\ldots,k_{n-1}))] \mathbin{,} [L(k'_{1},\ldots,k'_{n-1})^\partial] \bigr\rangle \in \bZ[\RB_{2}] = \Theta,
\]
which we may calculate using the formula \eqref{eq:intersection-form-formula}. Let us assume for convenience that $2\leq i\leq n-2$ (the edge cases $i \in \{1,n-1\}$ may be dealt with similarly). We order the basis for $V_\R(2)$ as follows:
\begin{itemizeb}
\item the six basis elements corresponding to the tuple $\cdots xyz \cdots$ (where $y$ is in the $i$-th position) for $xyz = 101, 200, 110, 020, 011, 002$;
\item $i-2$ blocks of three basis elements corresponding to the tuple $\cdots 1 \cdots xyz \cdots$ (where $y$ is in the $i$-th position) for $xyz = 100, 010, 001$;
\item $n-i-2$ blocks of three basis elements corresponding to the tuple $\cdots xyz \cdots 1 \cdots$ (where $y$ is in the $i$-th position) for $xyz = 100, 010, 001$;
\item the $\binom{n-3}{2}$ basis elements with $000$ in the $(i-1)$-st, $i$-th and $(i+1)$-st positions, in any order.
\end{itemizeb}
Since $\sigma_{i}$ is supported in a punctured subdisc of $\bD_{n}$ containing the $i$-th and $(i+1)$-st punctures and no other punctures, it is easy to see that the matrix $\mathscr{RL}_{2}(\sigma_{i})$ is a block matrix with respect to this partition: in other words, it consists of a $6 \times 6$ block, then $n-4 = (i-2) + (n-i-2)$ separate $3 \times 3$ blocks, followed by the identity $\binom{n-3}{2} \times \binom{n-3}{2}$ matrix. It remains to show that these $6 \times 6$ and $3 \times 3$ blocks are as claimed in Table~\ref{tab:LKB3}. We will explicitly compute three entries of the $6 \times 6$ matrix to explain how to do this; the remaining entries follow by exactly the same method.

Let us first compute the intersection
\begin{equation}
\label{eq:intersection-form-computation-1}
\bigl\langle [\sigma_{i}(L(\cdots 020 \cdots))] \mathbin{,} [L(\cdots 020 \cdots)^\partial] \bigr\rangle .
\end{equation}
Figure~\ref{fig:computation1} illustrates the two embedded surfaces $\sigma_{i}(L(\cdots 020 \cdots))$ and $L(\cdots 020 \cdots)^\partial$ and their unique intersection point $p$. The local signs $\mathrm{sgn}(p_{1})$ and $\mathrm{sgn}(p_{2})$ are both $-1$. The loop $\ell_p$ may be written as
\begin{equation}
\label{eq:ellp1}
\ell_p = \theta_{1,i+1} \theta_{2,i+1} \sigma ,
\end{equation}
where we are writing composition in $\pi_{1}(C_{2}(\bD_{n}))$ from left to right, $\sigma$ is the element that swaps the two points anticlockwise and $\theta_{j,l}$, for $j\in \{1,2\}$ and $l \in \{1,\ldots,n\}$, is the element where the $j$-th point loops once anticlockwise around the $l$-th puncture. Its induced permutation of the base configuration is non-trivial, so $\mathrm{sgn}(\ell_p) = -1$. The projection $\phi \colon \pi_{1}(C_{2}(\bD_{n})) \twoheadrightarrow \RB_{2}$ sends $\sigma \mapsto t$, $\theta_{1,l} \mapsto q_{1}$ and $\theta_{2,l} \mapsto q_{2}$ for all $l$, so the image of \eqref{eq:ellp1} is $q_{1} q_{2} t = q_{1} t q_{1} = t q_{2} q_{1} = t q_{1} q_{2}$. Thus, according to the formula \eqref{eq:intersection-form-formula}, we have $\eqref{eq:intersection-form-computation-1} = (-1)^3 t q_{1} q_{2} = -tq_{1} q_{2}$, as claimed in Table~\ref{tab:LKB3}.

Next let us compute the intersection
\begin{equation}
\label{eq:intersection-form-computation-2}
\bigl\langle [\sigma_{i}(L(\cdots 011 \cdots))] \mathbin{,} [L(\cdots 020 \cdots)^\partial] \bigr\rangle .
\end{equation}
Figure~\ref{fig:computation2} illustrates the two embedded surfaces $\sigma_{i}(L(\cdots 011 \cdots))$ and $L(\cdots 020 \cdots)^\partial$ and their two intersection points $p$ (the solid dots) and $q$ (the hollow dots). The corresponding two loops may be written as
\[
\ell_p = \theta_{1,i+1} \theta_{2,i+1} \qquad\qquad \ell_q = \theta_{1,i+1} \theta_{2,i+1} \sigma
\]
which have signs $\mathrm{sgn}(\ell_p) = +1$ and $\mathrm{sgn}(\ell_q) = -1$. Taking into account also the local signs, we see from the formula \eqref{eq:intersection-form-formula} that
\begin{align*}
\eqref{eq:intersection-form-computation-2} &= (-1)(+1)(+1)q_{1} q_{2} + (-1)(+1)(-1)q_{1} q_{2} t \\
&= -q_{1} q_{2} + tq_{1} q_{2} \\
&= (t-1)q_{1} q_{2},
\end{align*}
as claimed in Table~\ref{tab:LKB3}.
As a final example, we compute the intersection
\begin{equation}
\label{eq:intersection-form-computation-3}
\bigl\langle [\sigma_{i}(L(\cdots 101 \cdots))] \mathbin{,} [L(\cdots 110 \cdots)^\partial] \bigr\rangle .
\end{equation}
Figure~\ref{fig:computation3} illustrates the relevant embedded surfaces and their unique intersection point $p$. The corresponding loop is $\ell_p = \theta_{2,i+1}$, whose image in $\RB_{2}$ is $q_{2}$ and whose sign is $+1$. The local signs $\mathrm{sgn}(p_{1})$ and $\mathrm{sgn}(p_{2})$ are both $+1$, so we have $\eqref{eq:intersection-form-computation-3} = q_{2}$, as claimed in Table~\ref{tab:LKB3}.

All of the remaining entries in Table~\ref{tab:LKB3} may be verified similarly to these two examples; this completes the proof of Theorem \ref{athm:3-variable-LKB}.
\end{proof}

\begin{figure}[htb]
    \centering

    \includegraphics[scale=0.8]{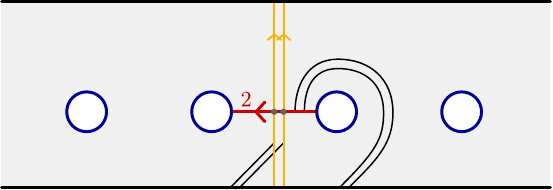}
    \caption{The computation of the intersection $\eqref{eq:intersection-form-computation-1} = -tq_{1} q_{2}$.}
    \label{fig:computation1}
	\vspace{2ex}

    \includegraphics[scale=0.8]{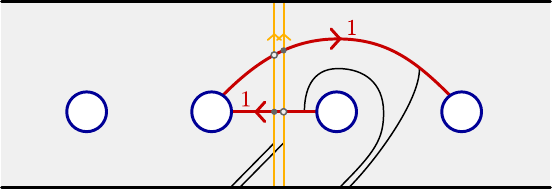}
    \caption{The computation of the intersection $\eqref{eq:intersection-form-computation-2} = (t-1)q_{1} q_{2}$.}
    \label{fig:computation2}
	\vspace{2ex}

    \includegraphics[scale=0.8]{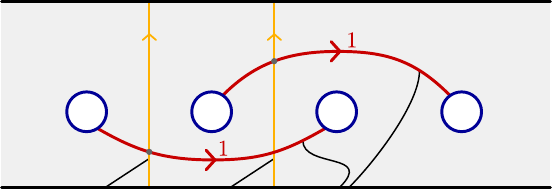}
    \caption{The computation of the intersection $\eqref{eq:intersection-form-computation-3} = q_{2}$.}
    \label{fig:computation3}
\end{figure}

\begin{rmk}
Table~\ref{tab:LKB3} in particular gives formulas for the classical Lawrence-Krammer-Bigelow representation when we set $q = q_{1} = q_{2}$ (which then commutes with $t$). We were not able to find explicit formulas for the Lawrence-Krammer-Bigelow representation in the literature using the basis that we describe above. Formulas, in different bases, may be found in \cite[Th.~4.1]{bigelow2001braid}, \cite[\S 3]{KrammerLK} and \cite[\S 1]{paoluzziparis}, but it is not entirely trivial to pass between the different bases. We also note that the $3 \times 3$ block matrices in Table~\ref{tab:LKB3} are, for obvious reasons, the same as the usual matrices of the Burau representation. Analogous formulas over a \emph{non-commutative ground ring} (as is the case of our formulas) have been computed in the context of mapping class groups in \cite[\S 7]{BlanchetPalmerShaukat}.
\end{rmk}

\begin{rmk}
The ribbon-LKB representation $V_\R(2)$ is the representation over $\bZ[Q_{\infty}]$ associated to the first row ($G=\B_{2,n}$) of Table~\ref{table:pro-nilpotent-representations}, where in this case $Q_{\infty} = \RB_{2}$. This is generalised by the second row of Table~\ref{table:pro-nilpotent-representations}, where $G=\B_{2,\bk,n}$ for any tuple $\bk$ of positive integers (the first row corresponds to the empty tuple). In this more general setting we also have an explicit description of the group $Q_{\infty}$: see Proposition~\ref{prop:residually-nilpotent-quotient} at the end of the next section. With sufficient patience, one could generalise the matrices of Table~\ref{tab:LKB3} to obtain matrices for the $\B_{n}$-representation over $\bZ[Q_{\infty}]$ associated to $G=\B_{2,\bk,n}$, with $Q_{\infty}$ as in Proposition~\ref{prop:residually-nilpotent-quotient}.
\end{rmk}

\paragraph*{Faithfulness.}

By construction, we have a tower of surjections of $\B_{n}$-representations
\begin{equation}
\label{eq:tower-of-surjections}
V_\R(2) \longtwoheadrightarrow \cdots \longtwoheadrightarrow V_{r}(2) \longtwoheadrightarrow V_{r-1}(2) \longtwoheadrightarrow \cdots \longtwoheadrightarrow V_{2}(2),
\end{equation}
where $V_{\bullet}(2)$ is the pro-nilpotent representation of $\B_{n}$ over $Q_{\bullet}$ from Theorem \ref{athm:pro-nilpotent} and $V_\R(2)$ is the ribbon-LKB representation of $\B_{n}$ over $\bZ[\RB_{2}] = \bZ[Q_{\infty}]$ from Theorems~\ref{athm:ribbon-Lawrence} and \ref{athm:3-variable-LKB}.

\begin{prop}
\label{prop:faithfulness}
Each representation $V_\R(2)$ and $V_{r}(2)$ in \eqref{eq:tower-of-surjections} is faithful.
\end{prop}
\begin{proof}
As modules, these representations are all free of the same (finite) rank over their respective ground rings and the surjections in \eqref{eq:tower-of-surjections} are induced by surjections of the ground rings. Since $V_{2}(2)$ is faithful, by \cite{bigelow2001braid,KrammerLK}, it follows that all of the other representations in the tower \eqref{eq:tower-of-surjections} are also faithful.
\end{proof}

\subsection{An embedding into a matrix ring}
\label{ss:embedding-into-matrix-ring}

We first recall a basic fact about embedding Laurent polynomial rings into the field of complex numbers.

\begin{fact}\label{fact:injection_algebraically_independent}
A ring homomorphism $\bZ[\bZ^{n}]\to \bC$ is injective if and only if the image $\{ z_{1},\ldots,z_{n} \}$ of a free generating set of $\bZ^{n}$ is algebraically independent.
\end{fact}
\begin{proof}
By definition, the subset $\{ z_{1},\ldots,z_{n} \} \subset \bC$ is algebraically independent if and only if the associated ring homomorphism $\bZ[\bN^n] \to \bC$ is injective. We therefore just have to show that this holds if and only if its unique extension to $\bZ[\bZ^n] \to \bC$ is injective. One direction is trivial; for the other, suppose that $f \in \bZ[\bZ^n] = \bZ[t_{1}^{\pm 1},\ldots,t_{n}^{\pm 1}]$ is a non-zero polynomial with $f(z_{1},\ldots,z_{n}) = 0$. For $1\leq i\leq n$ denote by $e_{i} \in \bZ$ the lowest exponent of $t_{i}$ appearing in a monomial of $f$. We then have $f(z_{1},\ldots,z_{n})z_{1}^{-e_{1}}\cdots z_{n}^{-e_{n}} = 0$, hence $ft_{1}^{-e_{1}}\cdots t_{n}^{-e_{n}} \in \bZ[\bN^n] = \bZ[t_{1},\ldots,t_{n}]$ is a non-zero polynomial in the kernel of $\bZ[\bN^n] \to \bC$.
\end{proof}

Let $H$ denote the subgroup of $\RB_{2}$ generated by $q_{1}$, $q_{2}$ and $t^{2}$.
\begin{rmk}
The group $H$ is free abelian on $\{q_{1},q_{2},t^2\}$ and is an index-$2$ subgroup of $\RB_{2}$.
\end{rmk}

Let $\Psi$ denote the ring homomorphism $\Theta = \bZ[\RB_{2}] \to \mathrm{Mat}_{2}(\bC)$ given on generators by \eqref{eq:embedding-into-matrix-ring}.
\begin{lem}\label{lem:restriction_Psi_injective}
Under the hypothesis of Proposition \ref{aprop:embedding-into-matrix-ring}, the restriction of $\Psi$ to $\bZ[H]$ is injective.
\end{lem}

\begin{proof}
We first observe that $\Psi(t^{2})=yz \cdot \mathrm{Id}$. Hence the image of $\bZ[H]$ under $\Psi$ is contained in the subring of diagonal matrices of $\mathrm{Mat}_{2}(\bC)$, which is isomorphic to $\bC\times \bC$.
Denote by $\bar{\Psi} \colon \bZ[H] \to \bC$ the composition of $\Psi|_{\bZ[H]} \colon \bZ[H] \to \bC \times \bC \subset \mathrm{Mat}_{2}(\bC)$ with the projection $\mathrm{pr}_{1} \colon \bC \times \bC \twoheadrightarrow \bC$ onto the first diagonal entry.
The three free abelian generators $q_{1}$, $q_{2}$ and $t^2$ of $H$ are sent under $\bar{\Psi}$ to the complex numbers $w$, $x$ and $yz$ respectively. By hypothesis, these are algebraically independent, so $\bar{\Psi}$ is injective by Fact~\ref{fact:injection_algebraically_independent}, and hence $\Psi|_{\bZ[H]}$ is injective.
\end{proof}

\begin{proof}[Proof of Proposition~\ref{aprop:embedding-into-matrix-ring}, i.e.~that $\Psi$ is injective]
Let $\kappa$ be an element of $\ker(\Psi)$. We may decompose $\kappa$ as $\lambda t + \mu$ where $\lambda,\mu \in \bZ[H]$. We recall from the proof of Lemma~\ref{lem:restriction_Psi_injective} that $\Psi(\lambda)$ and $\Psi(\mu)$ are diagonal matrices that we write with $\alpha,\beta,\gamma,\delta\in \bC$
\begin{equation}
\Psi(\lambda) = \left(\begin{matrix} \alpha & 0 \\ 0 & \beta \end{matrix}\right)
\qquad \mathrm{and} \qquad
\Psi(\mu) = \left(\begin{matrix} \gamma & 0 \\ 0 & \delta \end{matrix}\right).
\end{equation}
Hence, by computing the matrix of $\Psi(\kappa)$, we have
\begin{equation}
\Psi(\kappa) = \left(\begin{matrix} \gamma & \alpha y \\ \beta z & \delta \end{matrix}\right)=0.
\end{equation}
Since $y$ and $z$ are non-zero, this implies that $\Psi(\lambda) = \Psi(\mu) = 0$. We then deduce from Lemma~\ref{lem:restriction_Psi_injective} that $\lambda=\mu=0$, whence the result.
\end{proof}

\section{Surface braid groups}
\label{s:SBn}

In this section, we construct the pro-nilpotent representations of $\B_{n}$ and of $\B_{n}(S)$ listed in Table~\ref{table:pro-nilpotent-representations}, assuming throughout that $n\geq 3$. In each case (i.e.~row of Table~\ref{table:pro-nilpotent-representations}), the input is a split fibration sequence
\begin{equation}
\label{eq:input-fibration-sequence-repeated}
\begin{tikzcd}
X \ar[r,"i"] & Y \ar[r,"f"] & Z, \ar[l,bend right=40,dashed]
\end{tikzcd}
\end{equation}
whose induced split short exact sequence of fundamental groups is
\begin{equation}
\label{eq:input-ses-repeated}
\begin{tikzcd}
1 \ar[r] & K \ar[r,"\varphi"] & G \ar[r] & \Gamma \ar[l,bend right=30,dashed] \ar[r] & 1
\end{tikzcd}
\end{equation}
(i.e.~inputs \eqref{input:ses} and \eqref{input:fibration} from the beginning of \S\ref{s:general-recipe}), with $\Gamma$ and $G$ as in the given row of Table~\ref{table:pro-nilpotent-representations}. In each setting that we consider in this section, the space $X$ is a configuration space of the form $C_{\bk}(M\smallsetminus A)$ as in Corollary~\ref{coro:BM-homology-lemma}. Thus, by Corollaries~\ref{coro:general-recipe-pro-nilpotent} and \ref{coro:BM-homology-lemma}, the split fibration sequence \eqref{eq:input-fibration-sequence-repeated} induces a (genuine) pro-nilpotent representation of $\Gamma$ via the construction of \S\ref{s:general-recipe} as long as the inclusion $\varphi \colon K \hookrightarrow G$ in \eqref{eq:input-ses-repeated} is eNCP. Recall from Corollary~\ref{coro:lifting-eNCP} and Remark~\ref{rmk:lifting-eNCP} that a sufficient criterion to prove this is to find a quotient $G'$ of $G = K \rtimes \Gamma$ that is surjective when restricted to $K$ and zero when restricted to $\Gamma$:
\begin{equation}
\label{eq:ses-with-quotient-repeated}
\begin{tikzcd}
1 \ar[r] & K \ar[r] \ar[dr,two heads] & G \ar[r] \ar[d,two heads] & \Gamma \ar[l,bend right=30,dashed] \ar[r] \ar[dl,dashed,"0"] & 1 \\
&& G' &&
\end{tikzcd}
\end{equation}
and such that the lower central series of $G'$ does not stop. We will find such a quotient $G'$ in each case using the results of \cite{DPS}.

\begin{proof}[Proof of Theorem \ref{athm:other-pro-nilpotent-reps} for classical and surface braid groups]
We begin with the case of $G=\B_{2,\bk,n}$, namely the \uline{second row of Table~\ref{table:pro-nilpotent-representations}}, which generalises Theorem \ref{athm:pro-nilpotent} (which corresponds to the first row of Table~\ref{table:pro-nilpotent-representations}). Let $\bk = (k_{1},\ldots,k_{l})$ be a tuple of integers with $k_{i} \geq 3$ for each $1\leq i\leq l$. There is a split fibration sequence
\begin{equation}
\label{eq:input-fibration-sequence-B2kn}
\begin{tikzcd}
C_{2,\bk}(\bD_{n}) \ar[r,"i"] & C_{2,\bk,n}(\bD^2) \ar[r,"f"] & C_{n}(\bD^2), \ar[l,bend right=25,dashed]
\end{tikzcd}
\end{equation}
given by forgetting all blocks of points except for the last block of size $n$; the section is given by pushing in a new block of $n$ points near the boundary. The fundamental group of the middle space is $G = \B_{2,\bk,n}$. By \cite[Prop.~3.12]{DPS}, we have
\begin{equation}
\label{eq:identification-quotient-by-Gamma-infty}
\B_{2,\bk,n}/\LCS_{\infty} \cong \bZ^{\binom{l+2}{2}} \times (\bZ^{2(l+1)} \rtimes \bZ),
\end{equation}
where the generators of the last $\bZ$ factor act on the $\bZ^{2(l+1)}$ factor by swapping its coordinates in $l+1$ pairs. We may then quotient further onto $\bZ^{2(l+1)} \rtimes \bZ$ by killing the $\bZ^{\binom{l+2}{2}}$ factor and projecting the $\bZ$ factor onto $\bZ/2 = \mathfrak{S}_{2}$, since it acts by involutions. Finally, we may quotient $\bZ^{2(l+1)}$ onto $\bZ^2$ by sending half of the generators to $(1,0)$ and the other half to $(0,1)$ respecting the involution by which $\mathfrak{S}_{2}$ acts. The result is a quotient onto $G' = \bZ^2 \rtimes \mathfrak{S}_{2}$. By \cite[Prop.~A.28]{DPS}, its lower central series is given by $\LCS_{i}(G') = 2^{i-2}(\delta\bZ)$ for $i\geq 2$, with $\delta\bZ := \{(x,-x) \mid x \in \bZ \} \leq \bZ^{2}$. Hence $\LCS_{i}(G')\neq \LCS_{i+1}(G')$ for all $i\geq 2$ and so the lower central series $\LCS_{*}(G')$ does not stop. It remains to check that this fits into a diagram of the form \eqref{eq:ses-with-quotient-repeated}, i.e.:
\begin{itemizeb}
\item that $\B_{2,\bk}(\bD_{n})$ surjects onto $G'$ and
\item that the homomorphism $\B_{n} \dashrightarrow \B_{2,\bk,n} \to G'$ is zero.
\end{itemizeb}
From the proof of \cite[Prop.~3.12]{DPS} one sees that the standard generator of $\B_{2,\bk}(\bD_{n}) \subset \B_{2,\bk,n}$ that swaps the two points in the first block of the partition is sent to the generator of $\mathfrak{S}_{2} \subset G'$. Similarly, one sees that the standard generator of $\B_{2,\bk}(\bD_{n}) \subset \B_{2,\bk,n}$ that fixes all points except for the first one (in the first block of $2$), which loops once around one of the $n$ punctures, is sent to $(1,0) \in \bZ^2 \subset G'$. These two elements generate $G'$, so we have established the first claim above. For the second claim, one sees from the proof of \cite[Prop.~3.12]{DPS} that, under the identification \eqref{eq:identification-quotient-by-Gamma-infty}, each standard generator of $\B_{n}$ is sent to one of the copies of $\bZ$ in the $\bZ^{\binom{l+2}{2}}$ factor, and hence to zero in $G'$. By Corollary~\ref{coro:lifting-eNCP} (and Remark~\ref{rmk:lifting-eNCP}) it follows that the inclusion $\varphi \colon \B_{2,\bk}(\bD_{n}) \hookrightarrow \B_{2,\bk,n}$ is eNCP. Thus by Corollary~\ref{coro:general-recipe-pro-nilpotent} (and Corollary~\ref{coro:BM-homology-lemma}) we obtain from \eqref{eq:input-fibration-sequence-B2kn} a (genuine) pro-nilpotent representation of $\B_{n}$.

\uline{All of the other rows of Table~\ref{table:pro-nilpotent-representations}} concerning classical or surface braid groups may be proven in the same way. Recall that $S$ is a surface with non-empty boundary (but it may have infinite type; no additional complexity arises if we allow this). Let $\lambda$ be a tuple of positive integers and consider the split fibration sequence
\begin{equation}
\label{eq:input-fibration-sequence-surface-braids}
\begin{tikzcd}
C_{\lambda}(S_{n}) \ar[r,"i"] & C_{\lambda,n}(S) \ar[r,"f"] & C_{n}(S), \ar[l,bend right=25,dashed]
\end{tikzcd}
\end{equation}
given by forgetting all blocks of points except for the last block of size $n$. Here $S_{n}$ denotes $S$ minus $n$ interior points and the section is given by pushing in a new block of $n$ points near the boundary. We may then consider the following diagram:
\begin{equation}
\label{eq:ses-with-quotient-repeated-again}
\begin{tikzcd}
1 \ar[r] & \B_{\lambda}(S_{n}) \ar[r] \ar[dr,two heads] & \B_{\lambda,n}(S) \ar[r] \ar[d,two heads] & \B_{n}(S) \ar[l,bend right=30,dashed] \ar[r] \ar[dl,dashed,"0"] & 1 \\
&& \B_{\lambda}(S), &&
\end{tikzcd}
\end{equation}
where the vertical quotient is given by forgetting the last block of strands of size $n$. Notice that the left-hand diagonal map is clearly surjective: without loss of generality we may assume that the $n$ punctures are contained in a collar neighbourhood of $S$ and then every $\lambda$-braid on $S$ may be lifted to a $\lambda$-braid on $S_{n}$ by pushing it away from this collar neighbourhood. Also, the right-hand diagonal map is obviously zero. Thus, by the discussion at the beginning of this section, it suffices to check that the lower central series of the group $\B_{\lambda}(S)$ does not stop. This is the case for:
\begin{itemizeb}
\item $\B_{1,1,1,\bk}(\bD^2)$, by \cite[Lem.~3.8]{DPS};
\item $\B_{2,2,\bk}(\bD^2)$, by \cite[Cor.~3.15]{DPS};
\item $\B_{1,2,\bk}(\bD^2)$, by \cite[Cor.~3.18]{DPS};
\item $\B_{2,\bk}(S)$ for $S \neq \bD^2$, by \cite[Prop.~6.62]{DPS};
\item $\B_{1,\bk}(S)$ for $S \notin \{ \bD^2 , \text{Ann} , \text{M{\"o}b} \}$, by \cite[Prop.~6.62]{DPS};
\item $\B_{1,\bk}(\text{M{\"o}b})$ for $\bk \neq \varnothing$, by \cite[Prop.~6.62, Corollary~6.67 and Proposition~6.68]{DPS};
\item $\B_{1,1,\bk}(\text{Ann})$, by \cite[Lem.~6.63]{DPS}.
\end{itemizeb}
We note that earlier results on the stopping or non-stopping of the lower central series of (pure) surface braid groups were obtained in \cite{BellingeriGervaisGuaschi,GoncalvesGuaschi2009,BellingeriGervais2016}.
These cases correspond precisely to the rows in Table~\ref{table:pro-nilpotent-representations} concerning classical or surface braid groups (except for the top two rows, which were dealt with above). This completes the proof of Theorem \ref{athm:other-pro-nilpotent-reps} in these cases.
\end{proof}

To finish this section, we describe, for each row of Table~\ref{table:pro-nilpotent-representations} concerning classical or surface braid groups, the ground ring of the bottom ($r=2$) layer of the pro-nilpotent representation that we have constructed. This amounts to calculating the abelian group $A = Q_{2}$, since the ground ring of the bottom layer is $\bZ[Q_{2}]$. By construction (see diagram \eqref{eq:big-diagram-of-quotients}), this is the kernel of the (split) surjection $G^{\ab} \twoheadrightarrow \Gamma^{\ab}$ induced by the given (split) surjection $G \twoheadrightarrow \Gamma$.

\begin{prop}
\label{prop:abelian-quotients-1}
In the first \textup{(}$\B_{n}$\textup{)} block of Table~\ref{table:pro-nilpotent-representations}, the group $A$ is free abelian of rank
\[
\begin{cases}
\binom{l+2}{2}+l+1 & \text{when } G = \B_{2,\bk,n} \text{ \textup{(}with each $k_{i} \geq 3$\textup{)}} \\
\binom{l+4}{2}+l' & \text{when } G = \B_{1,1,1,\bk,n} \\
\binom{l+3}{2}+l'+2 & \text{when } G = \B_{2,2,\bk,n} \\
\binom{l+3}{2}+l'+1 & \text{when } G = \B_{1,2,\bk,n} \\
\end{cases}
\]
where $l$ is the number of blocks of $\bk$ and $l'$ is the number of blocks of $\bk$ of size at least $2$.
\end{prop}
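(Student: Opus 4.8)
The plan is to reduce the claim to the known structure of the abelianisations of partitioned braid groups. By construction (see diagram~\eqref{eq:big-diagram-of-quotients} and the discussion preceding the proposition), $A = Q_2$ is the kernel of the split surjection $G^{\ab} \twoheadrightarrow \B_n^{\ab}$ induced by forgetting all blocks of strands except the last block of size $n$. Since $\B_n^{\ab} \cong \bZ$ and this surjection is split, there is a splitting $G^{\ab} \cong A \oplus \bZ$; as $G$ is a finite-index subgroup of a braid group, its abelianisation is finitely generated, and it is free abelian by \cite[Proposition~3.4]{DPS}, so $A$ is (finitely generated) free abelian of rank $\mathrm{rank}(G^{\ab}) - 1$. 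Thus the proposition is equivalent to computing $\mathrm{rank}(\B_\lambda^{\ab})$ for the four partitions $\lambda = (2,\bk,n)$, $(1,1,1,\bk,n)$, $(2,2,\bk,n)$ and $(1,2,\bk,n)$.

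For this I would invoke the explicit description of $\B_\lambda^{\ab}$ from \cite[Proposition~3.4]{DPS}: if $\lambda$ has $l$ blocks, exactly $\beta$ of which have size at least $2$, then $\B_\lambda^{\ab}$ is free abelian of rank $\beta + \binom{l}{2}$, a basis being given by the class of one half-twist inside each block of size $\geq 2$ together with the class of a linking generator between strands of each unordered pair of distinct blocks. (If preferred this can be read off directly from a presentation of $\B_\lambda$: the half-twists inside a fixed block are all conjugate and hence identified in the abelianisation, the linking generators between a fixed pair of blocks are likewise all conjugate, and a within-block pure generator $A_{pq}$ becomes twice the half-twist class.) As a sanity check, $\lambda = (2,n)$ gives rank $2 + \binom{2}{2} = 3$, recovering the isomorphism $\B_{2,n}^{\ab} \cong \bZ^3$ used in the proof of Theorem~\ref{athm:pro-nilpotent}.

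It then only remains to count blocks, using the standing hypothesis $n \geq 3$, so that the distinguished block of size $n$ always contributes to $\beta$, and, in the first case, the hypothesis $k_i \geq 3$ for all $i$, so that each of the $l$ blocks of $\bk$ contributes to $\beta$ (whereas in the other cases only the $l'$ blocks of $\bk$ of size $\geq 2$ do). For $\lambda = (2,\bk,n)$ there are $l+2$ blocks, all of size $\geq 2$, giving $\mathrm{rank}(G^{\ab}) = (l+2) + \binom{l+2}{2}$; for $\lambda = (1,1,1,\bk,n)$ there are $l+4$ blocks with $\beta = l'+1$; for $\lambda = (2,2,\bk,n)$ there are $l+3$ blocks with $\beta = l'+3$; and for $\lambda = (1,2,\bk,n)$ there are $l+3$ blocks with $\beta = l'+2$. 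Subtracting $1$ in each case for the $\bZ$ summand $\B_n^{\ab}$ yields precisely the four ranks in the statement. The argument is entirely bookkeeping; the only points requiring care are invoking the abelianisation result of \cite{DPS} in the correct generality and counting the blocks of size $\geq 2$ without miscounting the size-$1$ blocks or the distinguished block of size $n$.
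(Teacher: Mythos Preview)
Your proof is correct and takes essentially the same approach as the paper: observe that $A = G^{\ab}/\bZ$ since $\B_n^{\ab} \cong \bZ$, then read off $G^{\ab}$ from \cite[Proposition~3.4]{DPS}. The paper's proof is in fact a two-sentence sketch of exactly this, so your version simply makes the block-counting explicit.
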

\begin{proof}
In each case, $\Gamma$ is $\B_{n}$, whose abelianisation is $\bZ$, so $A$ is $G^{\ab}$ minus one $\bZ$ summand. The abelianisation $G^{\ab}$ in each of the four cases may be read off from \cite[Prop.~3.5]{DPS}.
\end{proof}

\begin{prop}
\label{prop:abelian-quotients-2}
In the second \textup{(}$\B_{n}(S)$\textup{)} block of Table~\ref{table:pro-nilpotent-representations}, the group $A$ is isomorphic to
\[
\begin{cases}
H_{1}(S)^{l+1} \times \bZ^{l'+1} \times \bZ^{\binom{l+2}{2}} & \text{when } G = \B_{2,\bk,n}(S) \text{ with } S \text{ planar} \\
H_{1}(S)^{l+1} \times (\bZ/2)^{l'+1} & \text{when } G = \B_{2,\bk,n}(S) \text{ with } S \text{ non-planar} \\
H_{1}(S)^{l+1} \times \bZ^{l'} \times \bZ^{\binom{l+2}{2}} & \text{when } G = \B_{1,\bk,n}(S) \text{ with } S \text{ planar} \\
H_{1}(S)^{l+1} \times (\bZ/2)^{l'} & \text{when } G = \B_{1,\bk,n}(S) \text{ with } S \text{ non-planar} \\
\bZ^{l+1} \times (\bZ/2)^{l'} & \text{when } G = \B_{1,\bk,n}(\text{\textup{M{\"o}b}}) \\
\bZ^{l+2} \times \bZ^{l'} \times \bZ^{\binom{l+3}{2}} & \text{when } G = \B_{1,1,\bk,n}(\text{\textup{Ann}})
\end{cases}
\]
where $l$ is the number of blocks of $\bk$ and $l'$ is the number of blocks of $\bk$ of size at least $2$.
\end{prop}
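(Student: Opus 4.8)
The plan is to compute $A$ exactly as in the proof of Proposition~\ref{prop:abelian-quotients-1}: by construction (diagram~\eqref{eq:big-diagram-of-quotients}) the group $A = Q_2$ is the kernel of the surjection $G^{\ab} \twoheadrightarrow \Gamma^{\ab}$ induced by the split surjection $G \twoheadrightarrow \Gamma$ appearing in~\eqref{eq:ses-with-quotient-repeated-again}, namely the forgetful map from a partitioned surface braid group $\B_{\lambda,n}(S)$ onto $\B_n(S)$ that discards all blocks except the last block of size~$n$. Since this surjection is split (via the section pushing in a new block of $n$ points near the boundary of $S$), passing to abelianisations gives a split short exact sequence of abelian groups $1 \to A \to G^{\ab} \to \Gamma^{\ab} \to 1$, hence a direct-product decomposition $G^{\ab} \cong A \times \Gamma^{\ab}$. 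It therefore suffices to identify $G^{\ab}$, $\Gamma^{\ab}$ and the map between them, and then to read off the complementary summand $A$.

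First I would record $\Gamma^{\ab} = \B_n(S)^{\ab}$ (recall $n\geq 3$ throughout this section): this is $H_1(S) \times \bZ$ when $S$ is planar, $H_1(S) \times \bZ/2$ when $S$ is non-planar and not the M{\"o}bius band, $\bZ \times \bZ/2$ for $S = \text{M{\"o}b}$, and $\bZ^2$ for $S = \text{Ann}$; equivalently these are the specialisations to the trivial partition $(n)$ of the abelianisation computations for partitioned surface braid groups in \cite{DPS}. The main work is then to compute $G^{\ab}$ in each of the six remaining cases $\B_{2,\bk,n}(S)$, $\B_{1,\bk,n}(S)$, $\B_{1,\bk,n}(\text{M{\"o}b})$ and $\B_{1,1,\bk,n}(\text{Ann})$, which I would do by invoking the relevant abelianisation results of \cite{DPS} (the surface-braid-group counterparts of \cite[Proposition~3.4]{DPS} used in Proposition~\ref{prop:abelian-quotients-1}). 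The abelianisation of such a partitioned braid group splits as a product over: one copy of $H_1(S)$ per block (the total homology class of the strands in that block), a pairwise linking generator ($\bZ$, or $\bZ/2$ in the non-planar equal-block situations) for each pair of blocks, and an intra-block writhe generator ($\bZ$ or $\bZ/2$) for each block of size at least $2$; this is precisely what produces the exponents $l'+1$, $l'$, $\binom{l+2}{2}$, $\binom{l+3}{2}$ and so on in the statement. With $G^{\ab}$ written in these terms, the forgetful map $G^{\ab} \twoheadrightarrow \Gamma^{\ab}$ kills every generator attached to one of the forgotten $\lambda$-blocks (or to a pair of blocks involving such a block) and restricts to an isomorphism on the generators attached only to the last $n$-block; its kernel is thus the product of the former generators, which is the asserted group $A$.

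I expect the main obstacle to be the careful bookkeeping in the non-planar cases: deciding precisely when a writhe or linking generator is a $\bZ$ and when it is a $\bZ/2$, and in particular the M{\"o}bius-band row, where the interaction between an orientation-reversing core loop and the framing/writhe generator collapses what would naively be an $H_1(S)=\bZ$ factor together with the writhe into the single pattern $\bZ^{l+1}\times(\bZ/2)^{l'}$ recorded in the statement. I would address this by working directly from the presentations and abelianisation calculations of \cite{DPS} rather than re-deriving them, and by treating the annulus and M{\"o}bius rows as separate special cases — just as they already appear separately in Table~\ref{table:pro-nilpotent-representations} — so that each of the six formulas is obtained by a short, self-contained extraction of the kernel.
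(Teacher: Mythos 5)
Your plan is correct and follows exactly the route of the paper: the split surjection $G \twoheadrightarrow \Gamma$ abelianises to a split surjection $G^{\ab} \twoheadrightarrow \Gamma^{\ab}$, hence $G^{\ab} \cong A \oplus \Gamma^{\ab}$, and $A$ is then read off from the explicit abelianisation formulas for partitioned surface braid groups in \cite{DPS} (the paper cites Propositions 6.11 and 6.26 there). One small remark: the ``obstacle'' you anticipate in the M{\"o}bius and annulus rows does not in fact arise --- both are direct specialisations of the uniform planar/non-planar formulas
\[
H_1(S)^{t+1}\times\bZ^{t'+1}\times\bZ^{\binom{t+1}{2}}\twoheadrightarrow H_1(S)\times\bZ
\qquad\text{and}\qquad
H_1(S)^{t+1}\times(\bZ/2)^{t'+1}\twoheadrightarrow H_1(S)\times\bZ/2,
\]
with $H_1(\text{M{\"o}b})=H_1(\text{Ann})=\bZ$ and the appropriate block counts $t,t'$ for $\lambda=(1,\bk)$ or $(1,1,\bk)$; no special collapse or interaction occurs, so the two extra rows are bookkeeping rather than genuinely separate cases.
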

\begin{proof}
In each case, we have $G^{\ab} \cong A \oplus \Gamma^{\ab}$, so we may compute $A$ from the abelianisations of $\Gamma = \B_{n}(S)$ and of $G = \B_{\lambda,n}(S)$, which are computed explicitly in \cite[Prop.~6.32]{DPS} and \cite[Prop.~6.47]{DPS} respectively. Precisely, if $t$ denotes the number of blocks of $\lambda$ and $t'$ denotes the number of blocks of $\lambda$ of size at least $2$, the split surjection $G^{\ab} \twoheadrightarrow \Gamma^{\ab}$ is
\[
H_{1}(S)^{t+1} \times \bZ^{t'+1} \times \bZ^{\binom{t+1}{2}} \longtwoheadrightarrow H_{1}(S) \times \bZ
\]
when $S$ is planar and
\[
H_{1}(S)^{t+1} \times (\bZ/2)^{t'+1} \longtwoheadrightarrow H_{1}(S) \times \bZ/2
\]
when $S$ is non-planar. The specific computations of the proposition follow from these computations, specialising $\lambda$ and $S$ as appropriate.
\end{proof}

For the first two rows of the table, we may also compute the ground ring of the limit of the pro-nilpotent tower of representations. As we will explain, this essentially amounts to computing the residually nilpotent group $Q_{\infty}$. By construction (see diagram \eqref{eq:big-diagram-of-quotients}), this is the kernel of the (split) surjection $G/\LCS_{\infty} \twoheadrightarrow \Gamma/\LCS_{\infty}$ induced by the given (split) surjection $G \twoheadrightarrow \Gamma$. In the example in question, we have $G = \B_{2,\bk,n}$ and $\Gamma = \B_{n}$, for which
\[
\B_{2,\bk,n}/\LCS_{\infty} \cong \bZ^{\binom{l+2}{2}} \times (\bZ^{2(l+1)} \rtimes \bZ) \qquad\text{and}\qquad \B_{n}/\LCS_{\infty} = \B_{n}^{\ab} \cong \bZ ,
\]
by \cite[Prop.~3.12 and Example~2.3]{DPS}. Thus we have:
\begin{equation}
\label{eq:input-ses-B2kn-level-infinity}
\begin{tikzcd}
1 \ar[r] & \B_{2,\bk}(\bD_{n}) \ar[d,two heads] \ar[r] & \B_{2,\bk,n} \ar[d,two heads] \ar[r] & \B_{n} \ar[d,two heads] \ar[l,bend right=25,dashed] \ar[r] & 1 \\
1 \ar[r] & Q_{\infty} \ar[r] & \bZ^{\binom{l+2}{2}} \times (\bZ^{2(l+1)} \rtimes \bZ) \ar[r] & \bZ \ar[l,bend right=30,dashed] \ar[r] & 1.
\end{tikzcd}
\end{equation}
From the explicit presentation of $\B_{2,\bk,n}/\LCS_{\infty}$ given in the proof of \cite[Prop.~3.12]{DPS}, we see that the split surjection on the bottom row of \eqref{eq:input-ses-B2kn-level-infinity} is the projection onto one of the copies of $\bZ$ in the direct $\bZ^{\binom{l+2}{2}}$ factor. As a consequence, just as we observed at the beginning of \S\ref{ss:Bn-ribbon}, the induced $\B_{n}$-action on $Q_{\infty}$ is trivial. This means, by definition, that $Q^{\mathrm{u}}_{\infty} = Q_{\infty}$ and thus $Q^{\mathrm{u}}_{\bullet} = Q_{\bullet}$. The $\B_{n}$-representation over $\bZ[Q_{\infty}]$ associated to the projection $\B_{2,\bk}(\bD_{n}) \twoheadrightarrow Q_{\infty}$ may be thought of as the limit of the pro-nilpotent tower of representations associated to the tower $Q_{\bullet}$. This is explained more precisely in Lemma~\ref{lem:lifting-to-limit}, but note that the situation in this case is simpler, due to the fact that $Q^{\mathrm{u}}_{\infty} = Q_{\infty}$ and $Q^{\mathrm{u}}_{\bullet} = Q_{\bullet}$. To summarise:

\begin{prop}
\label{prop:residually-nilpotent-quotient}
The inverse limit of the pro-nilpotent representation of $\B_{n}$ corresponding to the second row of Table~\ref{table:pro-nilpotent-representations} is defined over the group ring $\bZ[Q_{\infty}]$, for
\[
Q_{\infty} = \bZ^{\binom{l+2}{2} - 1} \times (\bZ^{2(l+1)} \rtimes \bZ),
\]
where $1 \in \bZ$ acts on $\bZ^{2(l+1)}$ by swapping its coordinates in $l+1$ pairs.
\end{prop}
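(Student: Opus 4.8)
The plan is to identify the group $Q_\infty$ explicitly and then to read off the limit representation from the general machinery of \S\ref{s:general-recipe}, in particular Lemma~\ref{lem:lifting-to-limit}. By construction (see diagram \eqref{eq:big-diagram-of-quotients}), $Q_\infty$ is the kernel of the split surjection $\B_{2,\bk,n}/\LCS_\infty \twoheadrightarrow \B_n/\LCS_\infty$ induced by the forgetful map, so the first step is simply to compute this kernel.

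For the computation I would substitute the identifications $\B_n/\LCS_\infty = \B_n^{\ab} \cong \bZ$ and $\B_{2,\bk,n}/\LCS_\infty \cong \bZ^{\binom{l+2}{2}} \times (\bZ^{2(l+1)} \rtimes \bZ)$ from \cite[Example~2.3 and Proposition~3.11]{DPS} (recorded above in \eqref{eq:identification-quotient-by-Gamma-infty} and \eqref{eq:input-ses-B2kn-level-infinity}), where the generator of the right-hand $\bZ$ acts on $\bZ^{2(l+1)}$ by swapping its coordinates in $l+1$ pairs. The key point is that, under this identification, the split surjection onto $\B_n/\LCS_\infty \cong \bZ$ is the projection onto one of the direct $\bZ$-summands of the $\bZ^{\binom{l+2}{2}}$ factor; this is visible from the explicit presentation of $\B_{2,\bk,n}/\LCS_\infty$ in the proof of \cite[Proposition~3.11]{DPS} and is exactly what was used in the proof of Theorem~\ref{athm:other-pro-nilpotent-reps} for this row of Table~\ref{table:pro-nilpotent-representations}. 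Taking the kernel therefore just deletes that single $\bZ$-summand, yielding $Q_\infty \cong \bZ^{\binom{l+2}{2}-1} \times (\bZ^{2(l+1)} \rtimes \bZ)$ with the same swapping action, as claimed.

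It then remains to justify that the limit representation is defined over $\bZ[Q_\infty]$. Since $\B_{2,\bk,n}/\LCS_\infty$ splits as the direct product of $Q_\infty$ and $\B_n/\LCS_\infty$, the induced $\B_n$-action on $Q_\infty$ is trivial, so $Q^{\mathrm{u}}_\infty = Q_\infty$ and the quotient $\B_{2,\bk}(\bD_n) \twoheadrightarrow Q_\infty$ is $\B_n$-invariant; moreover the canonical map $Q_\infty \to \mathrm{lim}(Q_\bullet)$ from \eqref{eq:big-diagram-of-quotients} is injective, so $Q^{\mathrm{u}}_{\mathrm{lim}} = Q_\infty$ in the notation of Lemma~\ref{lem:lifting-to-limit}. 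Applying that lemma in its Borel-Moore variant — exactly as in the proof of Theorem~\ref{athm:pro-nilpotent} — to the split fibration sequence \eqref{eq:input-fibration-sequence-B2kn} produces a $\B_n$-representation $H_2^{BM}(C_2(\bD_n);\cL_{Q_\infty})$ over $\bZ[Q_\infty]$ whose reductions along the quotients $Q_\infty \twoheadrightarrow Q_r$ recover the pro-nilpotent tower, i.e.\ it is the limit of that tower.

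The only genuine subtlety worth flagging is that $\bZ^{2(l+1)} \rtimes \bZ$ with the swapping action is residually nilpotent but \emph{not} pro-nilpotently complete, so $Q_\infty$ is strictly smaller than $\mathrm{lim}(Q_\bullet)$; in particular the representation is not simply obtained as the group ring of the inverse limit of the tower of groups, and it is precisely Lemma~\ref{lem:lifting-to-limit} (which lifts to $Q^{\mathrm{u}}_{\mathrm{lim}}$ rather than $\mathrm{lim}(Q^{\mathrm{u}}_\bullet)$) that resolves this. Everything else is a direct reading of \cite{DPS} and the preceding discussion, so I do not expect any real obstacle.
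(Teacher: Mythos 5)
Your proposal reproduces the paper's argument essentially verbatim: compute $Q_\infty$ as the kernel of the split surjection $\B_{2,\bk,n}/\LCS_\infty \twoheadrightarrow \B_n/\LCS_\infty$ using \cite{DPS}, observe from the explicit presentation that the surjection is projection onto one $\bZ$-summand of the $\bZ^{\binom{l+2}{2}}$ factor so the $\B_n$-action on $Q_\infty$ is trivial and hence $Q^{\mathrm{u}}_\infty = Q_\infty$ (and $Q^{\mathrm{u}}_\bullet = Q_\bullet$), then invoke Lemma~\ref{lem:lifting-to-limit}. Your extra observations — that $Q_\infty \hookrightarrow \mathrm{lim}(Q_\bullet)$ because $G/\LCS_\infty \hookrightarrow \widehat{G}_{\mathrm{nil}}$, and that this inclusion is proper — are correct and if anything slightly more explicit than the paper's parenthetical appeal to Lemma~\ref{lem:lifting-to-limit}. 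One small slip worth fixing: the fibre of the split fibration sequence \eqref{eq:input-fibration-sequence-B2kn} is $C_{2,\bk}(\bD_n)$, not $C_2(\bD_n)$ (the latter is the $\bk=\varnothing$ case, i.e.\ the first row of the table), so the lifted representation is $H^{BM}_{*}(C_{2,\bk}(\bD_n);\cL_{Q_\infty})$ in the appropriate degree; this does not affect the identification of the ground ring $\bZ[Q_\infty]$, which is all that the proposition asserts.
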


In the special case $\bk = \varnothing$, corresponding to the first row of Table~\ref{table:pro-nilpotent-representations} (and Theorem \ref{athm:pro-nilpotent}), we have $l=0$ and so $Q_{\infty} = \bZ^2 \rtimes \bZ = \RB_{2}$, as we already observed in this particular case at the beginning of \S\ref{ss:Bn-ribbon}.

\begin{coro}
In the second row of Table~\ref{table:pro-nilpotent-representations}, for $r\geq 2$ we have
\[
Q_{r} = \bZ^{\binom{l+2}{2} - 1} \times ((\bZ^2/2^{r-2}\bar{\Delta})^{l+1} \rtimes \bZ),
\]
where $\bar{\Delta} = (1,-1) \in \bZ^2$ and $1 \in \bZ$ acts on each copy of $\bZ^2/2^{r-2}\bar{\Delta}$ by swapping coordinates.
\end{coro}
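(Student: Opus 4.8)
The plan is to reduce everything to the residually nilpotent quotients already identified in \eqref{eq:input-ses-B2kn-level-infinity} and then to run a short, explicit lower-central-series calculation for a metabelian group. Recall from Definition~\ref{defn:Qr} and diagram \eqref{eq:big-diagram-of-quotients} that $Q_r$ is the kernel of the split surjection $\B_{2,\bk,n}/\LCS_r \twoheadrightarrow \B_n/\LCS_r$. Since $\LCS_\infty(H) \subseteq \LCS_r(H)$ for any group $H$, the canonical quotient $H/\LCS_r(H)$ factors through $H/\LCS_\infty(H)$, and indeed $H/\LCS_r(H) \cong (H/\LCS_\infty(H))/\LCS_r(H/\LCS_\infty(H))$; so the first step is to observe that $\B_{2,\bk,n}/\LCS_r \to \B_n/\LCS_r$ is just the map induced on $\LCS_r$-quotients by the bottom row of \eqref{eq:input-ses-B2kn-level-infinity}. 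Here I use that $\B_n/\LCS_r = \B_n^{\ab} \cong \bZ$ for every $r \geq 2$ (because $\LCS_2(\B_n) = \LCS_\infty(\B_n)$, by \cite[Example~2.3]{DPS}, as $n \geq 3$) and that, as recalled just before Proposition~\ref{prop:residually-nilpotent-quotient}, the bottom-row surjection $\bZ^{\binom{l+2}{2}} \times (\bZ^{2(l+1)} \rtimes \bZ) \twoheadrightarrow \bZ$ is the projection onto one of the direct $\bZ$-summands.

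The second step is bookkeeping with direct products. Write $\B_{2,\bk,n}/\LCS_\infty \cong \bZ \times H'$ with $H' = \bZ^{\binom{l+2}{2}-1} \times (\bZ^{2(l+1)} \rtimes \bZ)$, where the distinguished first $\bZ$ is the summand that maps isomorphically onto $\B_n/\LCS_\infty$. Using that the lower central series of a direct product splits as a product (so $\LCS_r(\bZ \times H') = 0 \times \LCS_r(H')$ for $r \geq 2$), one gets $\B_{2,\bk,n}/\LCS_r \cong \bZ \times (H'/\LCS_r(H'))$ with the surjection onto $\B_n/\LCS_r \cong \bZ$ being the projection; hence $Q_r \cong H'/\LCS_r(H')$. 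Peeling off the free direct factor $\bZ^{\binom{l+2}{2}-1}$ in the same way, $Q_r \cong \bZ^{\binom{l+2}{2}-1} \times \bigl((\bZ^{2(l+1)} \rtimes \bZ)/\LCS_r(\bZ^{2(l+1)} \rtimes \bZ)\bigr)$.

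The one genuine computation, and the step I expect to need the most care, is to determine $\LCS_r(\bZ^{2(l+1)} \rtimes \bZ)$, where $t := 1 \in \bZ$ acts on $A := (\bZ^2)^{l+1}$ by the involution $\tau$ swapping the two coordinates of each $\bZ^2$-factor. For any semidirect product $A \rtimes \langle t \rangle$ with $A$ abelian, $\LCS_2$ is the commutator subgroup $(\tau-1)A$, and inductively $\LCS_{r+1} = [A \rtimes \langle t \rangle, \LCS_r] = (\tau-1)\LCS_r$ for $r \geq 2$ (using $\LCS_r \subseteq A$ and that $A$ is abelian), so $\LCS_r(A \rtimes \langle t \rangle) = (\tau-1)^{r-1}A$. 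On a single $\bZ^2$ the endomorphism $\tau-1$ has image $\bZ\bar{\Delta}$ with $\bar{\Delta} = (1,-1)$ and satisfies $(\tau-1)\bar{\Delta} = -2\bar{\Delta}$, whence $(\tau-1)^{r-1}\bZ^2 = 2^{r-2}\bZ\bar{\Delta}$. Therefore $\LCS_r(\bZ^{2(l+1)} \rtimes \bZ) = (2^{r-2}\bZ\bar{\Delta})^{l+1}$, which is a $\tau$-invariant subgroup of $A$, and quotienting by it leaves $(\bZ^2/2^{r-2}\bar{\Delta})^{l+1} \rtimes \bZ$ with $\bZ$ still acting by the coordinate swap on each factor. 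Combining this with the previous steps gives the stated description of $Q_r$. As a sanity check, at $r = 2$ we have $\bZ^2/\bZ\bar{\Delta} \cong \bZ$ with the swap acting trivially, so $Q_2 \cong \bZ^{\binom{l+2}{2}+l+1}$, which agrees with Proposition~\ref{prop:abelian-quotients-1}; alternatively, one could replace the explicit calculation of $\LCS_r(\bZ^{2(l+1)} \rtimes \bZ)$ above by the lower-central-series computations of \cite[Appendix~B]{DPS}.
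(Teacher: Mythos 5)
Your proof is correct and follows essentially the same route as the paper's: pass to the $\LCS_\infty$-quotients via diagram \eqref{eq:input-ses-B2kn-level-infinity}, peel off the direct $\bZ$-factors (using $\LCS_r(G_1 \times G_2) = \LCS_r(G_1) \times \LCS_r(G_2)$ and $\LCS_r(\bZ) = 0$ for $r \geq 2$), and reduce to computing $\LCS_r(\bZ^{2(l+1)} \rtimes \bZ)$. The one difference is that at this last step the paper simply cites \cite[Proposition~B.10]{DPS}, whereas you spell out the computation directly, showing that $\LCS_r(A \rtimes \langle t\rangle) = (\tau-1)^{r-1}A$ when $A$ is abelian and $\tau$ has order $2$, and then that $(\tau-1)^{r-1}\bZ^2 = 2^{r-2}\bZ\bar{\Delta}$. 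This is a useful elaboration that makes the argument self-contained, and your sanity check against Proposition~\ref{prop:abelian-quotients-1} at $r=2$ is a good confirmation.
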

\begin{proof}
To obtain $Q_{r}$, we may start with the bottom row of \eqref{eq:input-ses-B2kn-level-infinity}, quotient the middle and right-hand groups by $\LCS_{r}$ and then take the kernel. This uses the fact that quotienting by $\LCS_{\infty}$ and then by $\LCS_{r}$ is the same as simply quotienting by $\LCS_{r}$. Clearly quotienting by $\LCS_{r}$ does not affect the right-hand group $\bZ$ or the direct $\bZ^{\binom{l+2}{2}}$ factor in the middle group. We therefore just have to show that $(\bZ^{2(l+1)} \rtimes \bZ)/\LCS_{r} \cong (\bZ^2/2^{r-2}\bar{\Delta})^{l+1} \rtimes \bZ$. This follows from \cite[Prop.~A.10]{DPS}.
\end{proof}

\section{Loop braid groups}
\label{s:LBn}

For $n\geq 1$ we may consider the configuration space $C_{nS^1}(\bD^3)$ of $n$-component unlinks in the closed $3$-ball $\bD^3$. Formally this is constructed from the space $\mathrm{Emb}(nS^1,\bD^3)$ of smooth embeddings of $nS^1 = \bigsqcup_{n} S^1$ into $\bD^3$ with the Whitney topology by restricting to the path-component containing the standard unlink and taking the quotient by the action of $\mathrm{Diff}(nS^1)$:
\[
C_{nS^1}(\bD^3) = \mathrm{Emb}^{\mathrm{unl}}(nS^1,\bD^3) / \mathrm{Diff}(nS^1).
\]
By the main theorem of Brendle and Hatcher~\cite[Th.~1]{BrendleHatcher}, this space deformation retracts onto the subspace consisting of all embedded unlinks each of whose components are dilations, rotations and translations of the standard embedded circle in $\bD^3$. This subspace has the advantage of being a finite-dimensional manifold -- hence in particular locally compact -- allowing one to apply Borel-Moore homology to it and its variants. We will implicitly make this replacement whenever we apply Borel-Moore homology. The fundamental group of this space is the $n$-th \emph{extended loop braid group}:
\[
\exwB_{n} = \pi_{1}(C_{nS^1}(\bD^3)).
\]
If we quotient only by the orientation-preserving diffeomorphisms of $nS^1$, we obtain the space of \emph{oriented} $n$-component unlinks in $\bD^3$, whose fundamental group in the $n$-th \emph{loop braid group}:
\[
C_{nS^1}^+(\bD^3) = \mathrm{Emb}^{\mathrm{unl}}(nS^1,\bD^3) / \mathrm{Diff}^+(nS^1) \qquad\qquad \wB_{n} = \pi_{1}(C_{nS^1}^+(\bD^3)).
\]
The space $C_{nS^1}^+(\bD^3)$ is a $2^n$-fold covering of $C_{nS^1}(\bD^3)$, so $\wB_{n}$ is an index-$2^n$ subgroup of $\exwB_{n}$. In fact, there is a split projection $\exwB_{n} \twoheadrightarrow (\bZ/2)^n$ whose kernel is $\wB_{n}$, given by recording for each component of the base configuration whether the given loop of configurations preserves or reverses the orientation of that component. More generally, we may consider the space
\[
\C(n_P,n_{S_+},n_S) = \mathrm{Emb}^{\mathrm{unl}}(n_P \sqcup (n_{S_+} + n_S)S^1 , \bD^3) / \bigl( \mathfrak{S}_{n_P} \times \mathrm{Diff}^+(n_{S_+}S^1) \times \mathrm{Diff}(n_S S^1) \bigr)
\]
of configurations of $n_P$ points, $n_{S_+}$ oriented circles and $n_S$ unoriented circles (forming an unlink) in $\bD^3$, whose fundamental group is by definition the \emph{tripartite loop braid group} $\wB(n_P,n_{S_+},n_S)$. As special cases, we have
\begin{equation}
\label{eq:tripartite-welded-special-cases}
\begin{aligned}
\wB(n,0,0) &= \mathfrak{S}_{n}, \\
\wB(0,n,0) &= \wB_{n}, \\
\wB(0,0,n) &= \exwB_{n}.
\end{aligned}
\end{equation}
Finally, if $\lambda_P,\lambda_{S_+},\lambda_S$ are partitions of $n_P,n_{S_+},n_S$ respectively, we may also consider the subgroup $\wB(\lambda_P,\lambda_{S_+},\lambda_S) \subseteq \wB(n_P,n_{S_+},n_S)$ of those loops whose induced permutation of the base configuration preserves the given partition. This is the fundamental group of the corresponding covering space $\C(\lambda_P,\lambda_{S_+},\lambda_S)$ of $\C(n_P,n_{S_+},n_S)$. For more details, see \cite[\S 4.4--\S 4.6 and \S 5]{DPS}.

\paragraph*{Generators.}
An explicit generating set for $\wB(\lambda_P,\lambda_{S_+},\lambda_S)$ is given in \cite[Lem.~5.9]{DPS}, which we illustrate in Figure~\ref{fig:partitioned-tripartite-welded-braid-group-generators}. The generators $\tau_\alpha$ and $\sigma_\alpha$ involve two points or two circles that are both in the same block of the partition $\lambda = \lambda_P \lambda_{S_+} \lambda_S$, whereas the generators $\chi_{\alpha\beta}$ involve either two circles that are in different blocks of $\lambda$ or one point and one circle (which are therefore necessarily in different blocks of $\lambda$). Assuming that the base configuration consists of $n_P$ points and $n_{S_+} + n_S$ circles arranged linearly on the $xy$-plane, these generators have the following descriptions (where $n = n_P + n_{S_+} + n_S$):
\begin{itemizeb}
\item $\tau_\alpha$ (for $1\leq \alpha \leq n-1$ such that $\alpha,\alpha+1$ are in the same block of $\lambda$) interchanges the $\alpha$-th and $(\alpha + 1)$-st points or circles without either of them passing through the other;
\item $\sigma_\alpha$ (for $n_P + 1 \leq \alpha \leq n-1$ such that $\alpha,\alpha+1$ are in the same block of $\lambda$) interchanges the $\alpha$-th and $(\alpha + 1)$-st circles while one passes through the other;
\item $\rho_\alpha$ (for $n_P + n_{S_+} + 1 \leq \alpha \leq n$) rotates the $\alpha$-th circle by $180$ degrees, reversing its orientation;
\item $\chi_{\alpha\beta}$ (for $1\leq \alpha \leq n$ and $n_P + 1 \leq \beta \leq n$ such that $\alpha,\beta$ are in different blocks of $\lambda$) sends the $\alpha$-th point or circle in a loop passing through the $\beta$-th circle.
\end{itemizeb}
Specialising as in \eqref{eq:tripartite-welded-special-cases}, we obtain generating sets for $\wB_n$ and $\exwB_n$ involving only the first two (respectively three) families of generators $\tau_\alpha$ and $\sigma_\alpha$ (and $\rho_\alpha$). See Figure~\ref{fig:partitioned-tripartite-welded-braid-group-generators} for pictures.

\begin{figure}[t]
    \centering
    \includegraphics[scale=0.7]{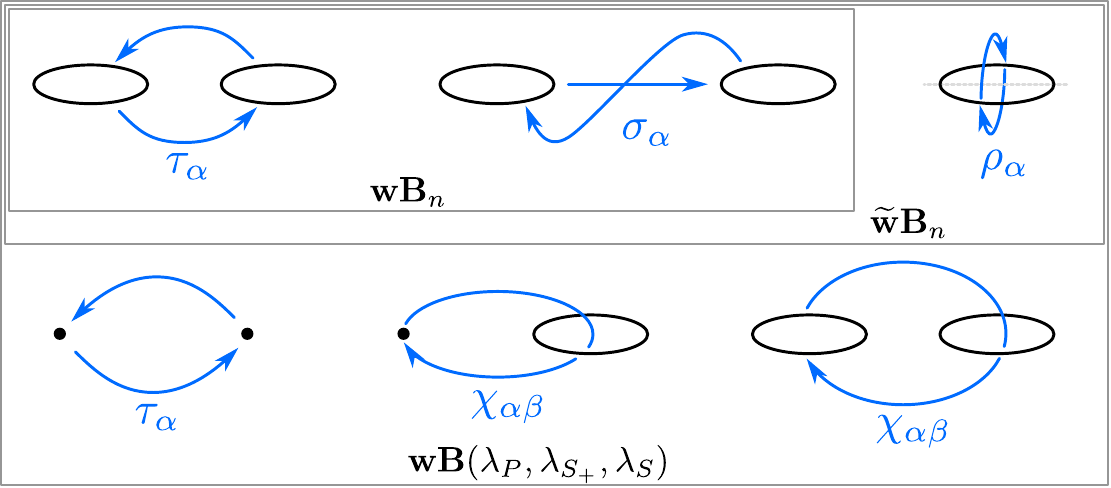}
    \caption{A generating set for the partitioned tripartite welded braid group $\wB(\lambda_P,\lambda_{S_+},\lambda_S)$. The top row generates $\exwB_n$ and the top row without $\rho_\alpha$ generates $\wB_n$.}
    \label{fig:partitioned-tripartite-welded-braid-group-generators}
\end{figure}

\begin{notation}
Loop braid groups (and their variations) are often also known as \emph{welded} braid groups, and the corresponding notation $\LB$ is synonymous with $\wB$. We adopt the slightly dissonant convention of using the name \emph{loop braid groups}, since we use their interpretation as loops of configurations of loops in an essential way, but the notation $\wB$ instead of $\LB$, since this is the notation used in the key reference \cite{DPS} for our proofs. We also refer the reader to \cite{Damianijourney} for a complete introduction to the equivalence between the notions of \emph{loop} braid groups and of \emph{welded} braid groups.
\end{notation}

In this section, we construct the weakly pro-nilpotent representations of the loop braid groups $\wB_{n}$ and extended loop braid groups $\exwB_{n}$ listed in Table~\ref{table:pro-nilpotent-representations}. As in \S\ref{s:SBn}, the input in each case is a split fibration sequence \eqref{eq:input-fibration-sequence-repeated} whose induced split short exact sequence of fundamental groups is the top row of diagram \eqref{eq:ses-with-quotient-repeated-again-again} below. By Lemma~\ref{lem:general-recipe-weakly-pro-nilpotent}, this induces a weakly pro-nilpotent representation of $\Gamma$ via the construction of \S\ref{s:general-recipe} as long as the inclusion $\varphi \colon K \hookrightarrow G$ is eNCP. By Corollary~\ref{coro:lifting-eNCP} and Remark~\ref{rmk:lifting-eNCP}, a sufficient criterion for this is the existence of a quotient $G'$ of $G = K \rtimes \Gamma$ that is surjective when restricted to $K$ and zero when restricted to $\Gamma$:
\begin{equation}
\label{eq:ses-with-quotient-repeated-again-again}
\begin{tikzcd}
1 \ar[r] & K \ar[r,"\varphi"] \ar[dr,two heads] & G \ar[r] \ar[d,two heads] & \Gamma \ar[l,bend right=30,dashed] \ar[r] \ar[dl,dashed,"0"] & 1 \\
&& G' &&
\end{tikzcd}
\end{equation}
and such that the lower central series of $G'$ does not stop. As in \S\ref{s:SBn}, we will find such a quotient $G'$ in each case using the results of \cite{DPS}.

\begin{proof}[Proof of Theorem \ref{athm:other-pro-nilpotent-reps} for loop braid groups]
In each case, we have $G = \wB(\lambda_P,\lambda_{S_+},\lambda_S)$, with the triple of partitions $(\lambda_P,\lambda_{S_+},\lambda_S)$ containing a certain triple of sub-partitions as specified in Table~\ref{table:pro-nilpotent-representations}.

For the \uline{$\wB_{n}$ block of Table~\ref{table:pro-nilpotent-representations}}, we have $\lambda_{S_+} = (n,\mu_{S_+})$ and the split fibration sequence \eqref{eq:input-fibration-sequence-repeated} in each case will be
\[
\begin{tikzcd}
X \ar[r,"i"] & \C(\lambda_P,\lambda_{S_+},\lambda_S) \ar[r,"f"] & \C(\varnothing,n,\varnothing), \ar[l,bend right=25,dashed]
\end{tikzcd}
\]
where $f$ forgets all blocks of strands except for a block of size $n$ in $\lambda_{S_+}$. We choose not to introduce additional notation for the fibre $X$ of $f$, since it will not be needed. The induced split short exact sequence of fundamental groups is then
\[
\begin{tikzcd}
1 \ar[r] & K=\pi_{1}(X) \ar[r,"\varphi"] & G = \wB(\lambda_P,\lambda_{S_+},\lambda_S) \ar[r] & \Gamma = \wB_{n} \ar[l,bend right=25,dashed] \ar[r] & 1.
\end{tikzcd}
\]

We first consider the \uline{$5$-th row of the $\wB_{n}$ block of Table~\ref{table:pro-nilpotent-representations}}, where we assume that $\lambda_{S_+}$ contains a block of size $n$ and $\lambda_S$ contains a block of size $1$. Under this assumption, there is a quotient map $G \twoheadrightarrow \wB(\varnothing,n,1)$ given by forgetting all blocks of strands except for these two. A generating set for the group $\wB(\varnothing,n,1)$ is described in \cite[Lem.~5.9]{DPS} (see the beginning of this section and Figure~\ref{fig:partitioned-tripartite-welded-braid-group-generators}) and consists of:
\begin{itemizeb}
\item $n-1$ elements $\tau_\alpha$ for $2\leq \alpha\leq n$ that swap two circles in the block of size $n$ without either passing through the other,
\item $n-1$ elements $\sigma_\alpha$ for $2\leq \alpha\leq n$ that swap two circles in the block of size $n$ while one passes through the other,
\item an element $\rho_{1}$ that rotates the unoriented circle (in the block of size $1$) by $180$ degrees about an axis lying the plane of the circle,
\item $n$ elements $\chi_{1\alpha}$ for $2\leq \alpha\leq n+1$ where the circle in the block of size $1$ follows a loop that passes once through the $\alpha$-th circle in the block of size $n$,
\item $n$ elements $\chi_{\alpha 1}$ for $2\leq \alpha\leq n+1$ where the circle in the block of size $1$ follows a loop that passes once \emph{around} the $\alpha$-th circle in the block of size $n$ (such that, in another frame of reference, the $\alpha$-th circle passes \emph{through} the circle from the block of size $1$).
\end{itemizeb}
For notational convenience, we have numbered the $n+1$ circles in the base configuration so that the first one corresponds to the block of size $1$. The proof of \cite[Prop.~5.17]{DPS} constructs a quotient map
\[
\wB(\varnothing,n,1) \longtwoheadrightarrow \bZ \rtimes (\bZ/2)
\]
that sends $\chi_{\alpha 1} \mapsto (1,0)$ for each $\alpha$, $\rho_{1} \mapsto (0,1)$ and all other generators to $(0,0)$. The right-hand side is the non-trivial semi-direct product of $\bZ$ with $\bZ/2$, where the generator of $\bZ/2$ acts on $\bZ$ by inversion. Composing these two quotient maps we obtain a surjection $G \twoheadrightarrow G' := \bZ \rtimes (\bZ/2)$. The lower central series of $\bZ \rtimes (\bZ/2)$ does not stop by \cite[Cor.~A.8]{DPS}. The elements $\chi_{\alpha 1}$ and $\rho_{1}$ become trivial under the projection onto $\wB_{n}$, so they lie in $K$. Since their images generate $\bZ \rtimes (\bZ/2)$, it follows that the restriction of $G \twoheadrightarrow \bZ \rtimes (\bZ/2)$ to $K$ is surjective. Finally, we have to check that the restriction of $G \twoheadrightarrow \bZ \rtimes (\bZ/2)$ to $\Gamma$ via the section is trivial. This follows since the image of $\Gamma$ under the section is generated by the elements $\sigma_\alpha$ and $\tau_\alpha$, which are all sent to $(0,0)$ in $\bZ \rtimes (\bZ/2)$.

We next consider the \uline{$4$-th row of the $\wB_{n}$ block of Table~\ref{table:pro-nilpotent-representations}}, where we assume that $\lambda_{S_+}$ contains a block of size $n$ and $\lambda_P$ contains a block of size $2$. Under this assumption, there is a quotient map $G \twoheadrightarrow \wB(2,n,\varnothing)$ given by forgetting all blocks of strands except for these two. By \cite[Lem.~5.9]{DPS} (see also the beginning of this section and Figure~\ref{fig:partitioned-tripartite-welded-braid-group-generators}), a generating set for $\wB(2,n,\varnothing)$ consists of:
\begin{itemizeb}
\item elements $\sigma_\alpha$ and $\tau_\alpha$ for $3\leq \alpha\leq n+1$ as above,
\item an element $\tau_{1}$ that swaps the two points,
\item $n$ elements $\chi_{1\alpha}$ for $3\leq\alpha\leq n+2$ where the first point loops through the $\alpha$-th circle.
\end{itemizeb}
The proof of \cite[Prop.~5.18]{DPS} constructs a quotient map
\[
\wB(2,n,\varnothing) \longtwoheadrightarrow \bZ^2 \rtimes \mathfrak{S}_{2}
\]
that sends $\chi_{1\alpha} \mapsto (1,0) \in \bZ^2$, $\tau_{1}$ to the generator of $\mathfrak{S}_{2}$ and each $\sigma_\alpha$, $\tau_\alpha$ to the trivial element. Composing the two quotient maps we obtain a surjection $G \twoheadrightarrow G' := \bZ^2 \rtimes \mathfrak{S}_{2}$. The lower central series of $\bZ^2 \rtimes \mathfrak{S}_{2}$ does not stop by \cite[Cor.~A.29]{DPS}. The elements $\chi_{1\alpha}$ and $\tau_{1}$ become trivial under the projection onto $\wB_{n}$, so they lie in $K$. Since their images generate $\bZ^2 \rtimes \mathfrak{S}_{2}$, it follows that the restriction of $G \twoheadrightarrow \bZ^2 \rtimes \mathfrak{S}_{2}$ to $K$ is surjective. Finally, the fact that the elements $\sigma_\alpha$ and $\tau_\alpha$ are sent to the trivial element in $\bZ^2 \rtimes \mathfrak{S}_{2}$ implies that the restriction of $G \twoheadrightarrow \bZ^2 \rtimes \mathfrak{S}_{2}$ to $\Gamma$ via the section is trivial.

We now consider simultaneously the \uline{first three rows of the $\wB_{n}$ block of Table~\ref{table:pro-nilpotent-representations}}. There is a quotient map $G \twoheadrightarrow \wB(\lambda_P,\mu_{S_+},\lambda_S)$ given by forgetting a block of size $n$ in $\lambda_{S_+}$ (remember that we have $\lambda_{S_+} = (n,\mu_{S_+})$ by assumption). Its restriction to $K \subset G$ is surjective since any loop braid in $\wB(\lambda_P,\mu_{S_+},\lambda_S)$ may be lifted to a loop braid in $G = \wB(\lambda_P,\lambda_{S_+},\lambda_S)$ by adding a stationary block of $n$ oriented circles near the boundary of $\bD^3$ and this lift lies in $K \subset G$ since it projects to the trivial element of $\wB_{n}$. Its restriction to $\Gamma$ via the section is trivial since $\Gamma \subset G$ consists of braids that are trivial except on the block of circles that is forgotten under the projection to $\wB(\lambda_P,\mu_{S_+},\lambda_S)$. We therefore only need to verify that the lower central series of $\wB(\lambda_P,\mu_{S_+},\lambda_S)$ does not stop. By assumption, the tuple of partitions $(\lambda_P,\mu_{S_+},\lambda_S)$ contains either $(\varnothing,b,\varnothing)$, $(\varnothing,\varnothing,b)$ or $(\varnothing,\{1,1\},\varnothing)$ as a tuple of sub-partitions, where $b \in \{2,3\}$. Hence the lower central series of $\wB(\lambda_P,\mu_{S_+},\lambda_S)$ does not stop by \cite[Th.~4.47]{DPS}.

For the \uline{$\exwB_{n}$ block of Table~\ref{table:pro-nilpotent-representations}}, we have $\lambda_S = (n,\mu_S)$ and the split fibration sequence \eqref{eq:input-fibration-sequence-repeated} in each case will be
\[
\begin{tikzcd}
X \ar[r,"i"] & \C(\lambda_P,\lambda_{S_+},\lambda_S) \ar[r,"f"] & \C(\varnothing,\varnothing,n), \ar[l,bend right=25,dashed]
\end{tikzcd}
\]
where $f$ forgets all blocks of strands except for a block of size $n$ in $\lambda_S$. The induced split short exact sequence of fundamental groups is then
\[
\begin{tikzcd}
1 \ar[r] & K=\pi_{1}(X) \ar[r,"\varphi"] & G = \wB(\lambda_P,\lambda_{S_+},\lambda_S) \ar[r] & \Gamma = \exwB_{n} \ar[l,bend right=25,dashed] \ar[r] & 1.
\end{tikzcd}
\]

We first consider the \uline{$5$-th row of the $\exwB_{n}$ block of Table~\ref{table:pro-nilpotent-representations}}, where we assume that $\lambda_S$ contains a block of size $n$ and a block of size $1$. Under this assumption, there is a quotient map $G \twoheadrightarrow \wB(\varnothing,\varnothing,\{n,1\})$ given by forgetting all blocks of strands except for these two. A generating set for the group $\wB(\varnothing,\varnothing,\{n,1\})$ consists of the five families of elements described above for the group $\wB(\varnothing,n,1)$, together with an additional family of $n$ elements $\rho_\alpha$ for $2\leq \alpha\leq n+1$, where $\rho_\alpha$ rotates the $\alpha$-th circle by $180$ degrees about an axis lying the plane of the circle. Similarly to the case of $\wB(\varnothing,n,1)$, the proof of \cite[Prop.~4.52]{DPS} constructs a quotient map
\[
\wB(\varnothing,\varnothing,\{n,1\}) \longtwoheadrightarrow \bZ \rtimes (\bZ/2)
\]
that sends $\chi_{\alpha 1} \mapsto (1,0)$ for each $\alpha$, $\rho_{1} \mapsto (0,1)$ and all other generators to $(0,0)$. Composing these two quotient maps we obtain a surjection $G \twoheadrightarrow G' := \bZ \rtimes (\bZ/2)$. The lower central series of $\bZ \rtimes (\bZ/2)$ does not stop by \cite[Cor.~A.8]{DPS}. The generators $(1,0)$ and $(0,1)$ of $\bZ \rtimes (\bZ/2)$ lift to $\chi_{\alpha 1}$ and $\rho_{1}$ respectively, which lie in $K$ since their projections to $\exwB_{n}$ are trivial, so the restriction of $G \twoheadrightarrow \bZ \rtimes (\bZ/2)$ to $K$ is surjective. Its restriction to $\Gamma = \exwB_{n}$ via the section is trivial since the image of $\exwB_{n}$ under the section is generated by the elements $\sigma_\alpha$, $\tau_\alpha$ and $\rho_\alpha$, which are all sent to $(0,0)$ in $\bZ \rtimes (\bZ/2)$.

Finally, we consider simultaneously the \uline{first four rows of the $\exwB_{n}$ block of Table~\ref{table:pro-nilpotent-representations}}. The proof in this case is very similar to the proof in the case of the first three rows of the $\wB_{n}$ block of Table~\ref{table:pro-nilpotent-representations}. There is a quotient map $G \twoheadrightarrow \wB(\lambda_P,\lambda_{S_+},\mu_S)$ given by forgetting a block of size $n$ in $\lambda_S$ (remember that we have $\lambda_S = (n,\mu_S)$ by assumption). Its restriction to $K \subset G$ is surjective since any loop braid in $\wB(\lambda_P,\lambda_{S_+},\mu_S)$ may be lifted to a loop braid in $G = \wB(\lambda_P,\lambda_{S_+},\lambda_S)$ by adding a stationary block of $n$ unoriented circles near the boundary of $\bD^3$ and this lift lies in $K \subset G$ since it projects to the trivial element of $\exwB_{n}$. Its restriction to $\Gamma$ via the section is trivial since $\Gamma \subset G$ consists of braids that are trivial except on the block of circles that is forgotten under the projection to $\wB(\lambda_P,\lambda_{S_+},\mu_S)$. We therefore only need to verify that the lower central series of $\wB(\lambda_P,\lambda_{S_+},\mu_S)$ does not stop. By assumption, the tuple of partitions $(\lambda_P,\lambda_{S_+},\mu_S)$ contains either $(\varnothing,b,\varnothing)$, $(\varnothing,\varnothing,b)$, $(\varnothing,\{1,1\},\varnothing)$ or $(2,i,\varnothing)$ with $i\geq 1$ as a tuple of sub-partitions, where $b \in \{2,3\}$. Then by \cite[Th.~4.47]{DPS} in the first three cases and by \cite[Prop.~5.18]{DPS} in the fourth case the lower central series of $\wB(\lambda_P,\mu_{S_+},\lambda_S)$ does not stop.
\end{proof}

To finish this section, we describe, for each row of Table~\ref{table:pro-nilpotent-representations} concerning $\wB_{n}$ or $\exwB_{n}$, the ground ring of the bottom ($r=2$) layer of the weakly pro-nilpotent representation that we have constructed. In other words, we calculate the abelian group $A = Q_{2}$, since the ground ring of the bottom layer is $\bZ[Q_{2}]$. By construction (see diagram \eqref{eq:big-diagram-of-quotients}), this is the kernel of the (split) surjection $G^{\ab} \twoheadrightarrow \Gamma^{\ab}$ induced by the given (split) surjection $G \twoheadrightarrow \Gamma$.

\begin{prop}
\label{prop:abelian-quotients-3}
For $n\geq 2$, in the $\wB_{n}$ and $\exwB_{n}$ blocks of Table~\ref{table:pro-nilpotent-representations}, the group $A$ is isomorphic to
\[
\begin{cases}
\bZ^{N-1} \times (\bZ/2)^{M-1} & \text{for } \Gamma = \wB_{n} , \\
\bZ^{N-1} \times (\bZ/2)^{M-2} & \text{for } \Gamma = \exwB_{n} ,
\end{cases}
\]
where for $\star \in \{P,S_+,S\}$ we write $l_\star$ for the number of blocks of the partition $\lambda_\star$ and $l'_\star$ for the number of blocks of the partition $\lambda_\star$ of size at least $2$, we set $l = l_P + l_{S_+} + l_S$ and $l' = l'_P + l'_{S_+} + l'_S$ and we define
\[
N = l'_{S_+} + l_{S_+}(l-1) \qquad\text{and}\qquad M = l' + l'_S + l_S l.
\]
\end{prop}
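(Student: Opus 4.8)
The plan is to follow the same strategy as in the proofs of Propositions~\ref{prop:abelian-quotients-1} and \ref{prop:abelian-quotients-2}. As noted just above, $A = Q_2$ is the kernel of the split surjection $G^{\ab} \twoheadrightarrow \Gamma^{\ab}$ induced by the (split) surjection $G \twoheadrightarrow \Gamma$ attached to the relevant row of Table~\ref{table:pro-nilpotent-representations}; since this surjection splits, $G^{\ab} \cong A \oplus \Gamma^{\ab}$, so the whole problem reduces to computing the two abelianisations and identifying the direct summand of $G^{\ab}$ that the section carries isomorphically onto $\Gamma^{\ab}$.

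First I would record the relevant abelianisations, all of which are contained in \cite{DPS}. For $n \geq 2$ one has $\wB_n^{\ab} \cong \bZ \times \bZ/2$, generated by the common class of the pass-through generators $\sigma_i$ (of infinite order) and the common class of the swap generators $\tau_i$ (of order $2$); the abelianisation of $\exwB_n$ is obtained from this by adjoining the common class of the flip generators $\rho_i$, and is likewise read off from \cite{DPS}. The abelianisation of the auxiliary group $G = \wB(\lambda_P,\lambda_{S_+},\lambda_S)$ is given by the $H_1$-computation for (partitioned) tripartite loop braid groups in \cite{DPS}: it is a direct sum of copies of $\bZ$ and $\bZ/2$, with the free summands accounting for the $\sigma$-classes of the oriented blocks of size $\geq 2$ together with the winding classes of the oriented blocks around all $l$ blocks, and the torsion summands accounting for the $\tau$-classes of the blocks of size $\geq 2$ together with the flip classes and mod-$2$ winding classes contributed by the unoriented blocks. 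Matching this against the quantities $N$ and $M$ in the statement (using $l = l_P + l_{S_+} + l_S$ and $l' = l'_P + l'_{S_+} + l'_S$) is a short combinatorial check which in particular isolates the free rank and the $\bZ/2$-multiplicity of $G^{\ab}$.

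Next I would make the split surjection $G^{\ab} \twoheadrightarrow \Gamma^{\ab}$ explicit. The map $G \twoheadrightarrow \Gamma$ forgets every block of strands except the distinguished block of size $n$, which lies in $\lambda_{S_+}$ when $\Gamma = \wB_n$ and in $\lambda_S$ when $\Gamma = \exwB_n$, and its section pushes in such a block near the boundary of $\bD^3$. On first homology this sends every winding class and every class attached to a discarded block to $0$, and restricts to an isomorphism on the subgroup of $G^{\ab}$ generated by the images of the generators of the distinguished block -- which is exactly a copy of $\Gamma^{\ab}$, split off by the section. Hence $A$ is the complementary summand of $G^{\ab}$, obtained by deleting the summands attached to the distinguished block: one $\bZ$ and one $\bZ/2$ in the $\wB_n$ case, and the summands making up $\exwB_n^{\ab}$ in the $\exwB_n$ case. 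Reading off the resulting free rank and $\bZ/2$-multiplicity from the description of $G^{\ab}$ above yields $A \cong \bZ^{N-1} \times (\bZ/2)^{M-1}$ when $\Gamma = \wB_n$ and $A \cong \bZ^{N-1} \times (\bZ/2)^{M-2}$ when $\Gamma = \exwB_n$; the isomorphism type of the complement is unambiguous because all torsion in $G^{\ab}$ has exponent $2$.

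The main obstacle is purely bookkeeping rather than any genuine difficulty. One must correctly extract the free rank and the $\bZ/2$-multiplicity of $G^{\ab} = \wB(\lambda_P,\lambda_{S_+},\lambda_S)^{\ab}$ from the $H_1$-computation of \cite{DPS}, verify that these agree with the combinatorial expressions $N$ and $M$ -- taking account of how the placement of the distinguished block (oriented in $\lambda_{S_+}$ versus unoriented in $\lambda_S$) changes these counts and accounts for the difference between the two cases -- and confirm that the forgetting map really induces the projection onto the summands attached to the distinguished block, so that no summand of $\Gamma^{\ab}$ is missed and the kernel is exactly the complementary summand. With these points settled, the conclusion follows by subtraction.
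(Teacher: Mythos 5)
Your proposal follows essentially the same route as the paper's proof: identify $A$ as the kernel of the split surjection $G^{\ab} \twoheadrightarrow \Gamma^{\ab}$, invoke the computation $G^{\ab} \cong \bZ^N \times (\bZ/2)^M$ from \cite[Proposition~5.7]{DPS}, use $\wB_n^{\ab} \cong \bZ \times \bZ/2$ and $\exwB_n^{\ab} \cong \bZ \times (\bZ/2)^2$, and read off the complementary summand. Your extra remark that the complement is determined because all torsion in $G^{\ab}$ has exponent $2$ makes explicit a point the paper leaves implicit, but the argument is the same.
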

\begin{proof}
By \cite[Prop.~5.10]{DPS}, we have $G^{\ab} = \wB(\lambda_P,\lambda_{S_+},\lambda_S)^{\ab} \cong \bZ^N \times (\bZ/2)^M$. The result then follows since $A$ is the kernel of the split surjection $G^{\ab} \twoheadrightarrow \Gamma^{\ab}$ and we have $\Gamma^{\ab} \cong \bZ \times \bZ/2$ for $\Gamma = \wB_{n}$ and $\Gamma^{\ab} \cong \bZ \times (\bZ/2)^2$ for $\Gamma = \exwB_{n}$.
\end{proof}

\phantomsection
\addcontentsline{toc}{section}{References}
\renewcommand{\bibfont}{\normalfont\small}
\setlength{\bibitemsep}{0pt}
\printbibliography

\end{document}